\documentclass[12pt]{amsart}
\usepackage{amsmath, amsthm, amscd, amssymb, amsfonts, latexsym, mathtools}
\usepackage{fullpage}
\usepackage{bm,mathdots}
\usepackage{dsfont}
\usepackage[all]{xy}
\usepackage{tikz}
\usepackage{float}
\usepackage{fancybox}
\usepackage{bbm}
\usepackage{ytableau}
\numberwithin{equation}{section}
\usepackage[colorlinks=true, pdfstartview=FitV,linkcolor=blue,citecolor=blue,urlcolor=blue]{hyperref}

\newenvironment{red}{\relax\color{red}}{\relax}
\newenvironment{blue}{\relax\color{blue}}{\hspace*{.5ex}\relax}
\newenvironment{jaune}{\relax\color{magenta}}{\relax}
\newcommand{\ber}{\begin{red}}
\newcommand{\er}{\end{red}}
\newcommand{\beb}{\begin{blue}}
\newcommand{\eb}{\end{blue}}
\newcommand{\bej}{\begin{jaune}}
\newcommand{\ej}{\end{jaune}}

\newcommand{\kommentar}[1]{}

\newcommand{\F}{\mathbb F}

\newcommand{\Z}{\mathbb Z}
\newcommand{\Q}{\mathbb Q}

\DeclareMathOperator{\tr}{tr}

\renewcommand{\pmod}[1]{\,(\mathrm{mod}\,#1)}

\newtheorem{lem}{Lemma}[section]
\newtheorem{prop}[lem]{Proposition}
\newtheorem{thm}[lem]{Theorem}

\newtheorem{cor}[lem]{Corollary}

\theoremstyle{definition}

\begin{document}

\title{Murmurations of quadratic and cubic characters \\ over function fields}

\author{Matilde Lalín}
\address{D\'epartement de math\'ematiques et de statistique, Universit\'e de Montr\'eal, Montreal, QC H3C 3J7, Canada}\email{matilde.lalin@umontreal.ca}

\author[K.-H. Lee]{Kyu-Hwan Lee}
\address{Department of
Mathematics, University of Connecticut, Storrs, CT 06269, U.S.A. \hfill \break \indent Korea Institute for Advanced Study, Seoul 02455, Republic of Korea}
\email{khlee@math.uconn.edu}

\author{Thomas Oliver}
\address{University of Westminster, London, U.K.}
\email{T.Oliver@westminster.ac.uk}

\author{Alexey Pozdnyakov}
\address{Department of Mathematics, Princeton University, Princeton, NJ 08544-1000, U.S.A.}
\email{ap5763@princeton.edu}

\begin{abstract}
We compute the murmuration density for the family of quadratic characters over function fields. 
We show that the expectation of this density coincides with a correction term arising in the traces of high powers of the Frobenius class in this family, for which we determine large-genus asymptotics. 
This correction term admits a description in terms of the stable homology of the moduli space of hyperelliptic curves of genus $g$ with one marked Weierstrass point. 
We further identify this correction as a lower-order term in the one-level density and we obtain a non-vanishing result at the central point for quadratic $L$-functions over function fields, via an alternative approach to using the ratios conjecture.
We also compute the murmuration density for the thin family of primitive cubic characters in the Kummer setting.
\end{abstract}

\maketitle

\tableofcontents

\section{Introduction}

\subsection{Motivation}
The murmuration phenomenon was first observed empirically in the family of elliptic curves over $\mathbb{Q}$ by He, Lee, Oliver and Pozdnyakov \cite{HLOP} and then established for the family of modular forms of fixed weight and root number by Zubrilina \cite{Zubrilina}. 
Furthermore, as clarified by Sarnak \cite{sarnak2023murmurations}, murmurations provide a refinement of the one-level density. 

The purpose of this article is to investigate the function field analogue of murmurations. 
We begin by computing the murmuration density for the family of quadratic characters over function fields, already known in the number field setting under GRH due to Lee, Oliver and Pozdnyakov \cite{LOP}. 
We observe that the murmuration density is closely connected to a correction term in the traces of high powers of the Frobenius class in this family, which was already established by Rudnick \cite{rudnick2008traces}. 
In particular, this correction term is simply the expectation of the murmuration density. 
As a corollary, it also appears as a lower-order term in the one-level density for this family in the conductor limit.

We then investigate this correction term from a purely cohomological perspective. 
Using recent developments in homological stability for braid groups by Bergstr\"om, Diaconu, Petersen, and Westerland \cite{bergström2024hyperellipticcurvesscanningmap} and by Miller, Patzt, Petersen, and Randal-Williams \cite{miller2025uniformtwistedhomologicalstability}, we prove a formula for the correction term as a function of the stable homology of $\mathcal{H}_g^{1,0}$, the moduli space of hyperelliptic curves of genus $g$ with one marked Weierstrass point. 
The resulting expression can then be computed using the main theorems of \cite{bergström2024hyperellipticcurvesscanningmap}. 

The methods used to compute this correction term also allow us to obtain asymptotics for traces of arbitrarily large powers of the Frobenius class in the limit $g \to \infty$, provided that $q$ is sufficiently large relative to the power. 
As a corollary, we can obtain the one-level density for test functions with Fourier transform supported in an arbitrary interval, provided that $q$ is sufficiently large relative to the interval. 
Using a standard argument, this also implies that 100\% of the $L$-functions of quadratic characters of $\mathbb{F}_q[T]$ do not vanish at the central point in the double limit $\lim_{q \to \infty} \liminf_{g \to \infty}$. 
It was already observed in \cite{wang2024noteszetaratiostabilization} that one can obtain such results from homological stability by proving a form of the ratios conjecture, although our proof is a more direct computation of the traces. 
Similar results follow from \cite{wang2024noteszetaratiostabilization} by applying Cauchy's integral formula to $L'/L(s,\chi_D)$.  

We conclude by computing the murmuration density for the thin Kummer family of primitive cubic characters. 
The situation here differs from that of quadratic characters in several important ways. 
First, this family is non-self-dual. 
Recalling that the murmuration density as defined in \cite{Zubrilina} and \cite{LOP} includes the root number, we must employ different techniques to compute the density. 
Second, this family has unitary symmetry type, so the one-level density no longer has a sharp transition where we expect murmurations. 
Nevertheless, as in the case of the family of all Dirichlet characters in the number field setting, the murmuration density still defines a non-trivial function of the prime $P$ in the large conductor limit. 
Third, this example shows that the murmuration density varies with $P$, in contrast with the quadratic case.

\subsection{Results}
We begin with the case of quadratic characters. 
Let $q$ denote an odd prime power. For a monic polynomial $D \in \mathbb{F}_q[T]$ of positive degree, which is not a perfect square, we define the quadratic character $\chi_D$ on $\mathbb{F}_q[T]$ by 
\begin{equation*}
    \chi_D(f) =\left(\frac{D}{f}\right).
\end{equation*}
We define the associated $L$-function via the Euler product
\begin{equation*}
    L(s, \chi_D) := \prod_{P}(1-\chi_D(P)|P|^{-s})^{-1}, \quad \Re(s) > 1,
\end{equation*}
with product over all monic irreducible (prime) polynomials and $|P| = q^{\deg P}$. Equivalently, this is the $L$-function of the Galois representation attached to the first cohomology of the hyperelliptic curve $C_D: y^2= D(x)$. Using the Weil conjectures for curves over finite fields, we can define a conjugacy class of $2g \times 2g$ unitary symplectic matrices $\Theta_D$ by
\begin{equation*}
    L(s,\chi_D) = \det(I-q^{1/2-s} \Theta_D).
\end{equation*}
We call this the Frobenius conjugacy class for the curve $C_D$.

Let $\mathcal{M}_n$ denote the set of monic polynomials of degree $n$. Similarly, let  $\mathcal{H}_{n}$ and $\mathcal{P}_n$ denote the sets of monic square-free polynomials of degree $n$ and the set of monic irreducible polynomials of degree $n$, respectively. We put the uniform probability measure on $\mathcal{H}_{2g+1}$ and denote by $\langle F \rangle$ the expectation of $F$ over $\mathcal{H}_{2g+1}$ for any function $F$ on $\mathcal{H}_{2g+1}$. We define a murmuration density $M_{q,g} : \mathcal{P}_{2g} \to \mathbb{R}$ for the family of quadratic characters by
\begin{equation}\label{eq.md}
    M_{q,g}(P) := \frac{1}{\# \mathcal{H}_{2g+1}} \sum_{D \in \mathcal{H}_{2g+1}} \chi_D(P)\sqrt{|P|} = \langle \chi_D(P)\sqrt{|P|} \rangle,
\end{equation}
in which the factor of $\sqrt{|P|}$ is comparable to that found in the number field case \cite{LOP}.
The murmuration density is given by the following theorem.
\begin{thm}\label{thm:quadratic_murmuration}
    For all $P \in \mathcal{P}_{2g}$ we have
    \begin{equation*}
        M_{q,g}(P) = \frac{1}{q-1}.
    \end{equation*}
\end{thm}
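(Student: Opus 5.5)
The plan is to compute the average of $\chi_D(P)$ over square-free $D$ with a generating function in the variable $D$, for fixed $P$. The point is that $D\mapsto \chi_D(P)=\left(\frac{D}{P}\right)$ is the quadratic character $\chi_P$ modulo $P$, evaluated at $D$. Write $u$ for the formal variable and set
\[
F_P(u):=\sum_{D \text{ monic square-free}} \left(\frac{D}{P}\right) u^{\deg D}.
\]
Comparing Euler products, $F_P(u)=L(u,\chi_P)/L(u^2,\chi_P^2)$, where $L(u,\chi_P)=\sum_{B \text{ monic}}\chi_P(B)u^{\deg B}$.

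The character $\chi_P^2$ is the principal character modulo $P$. Hence
\[
L(u^2,\chi_P^2)=\frac{1-u^{4g}}{1-qu^2},
\]
using $\deg P=2g$, $Z(u)=(1-qu)^{-1}$, and the removal of the Euler factor at $P$. This gives
\[
F_P(u)=L(u,\chi_P)\,\frac{1-qu^2}{1-u^{4g}}.
\]
Since $\chi_P$ is nontrivial modulo $P$, its sum over any full set of residues vanishes, so $L(u,\chi_P)$ is a polynomial of degree at most $2g-1$. Write $c_j$ for its coefficients. The factor $(1-u^{4g})^{-1}$ does not affect degrees below $4g$. Therefore the coefficient of $u^{2g+1}$ in $F_P$ is
\[
c_{2g+1}-q\,c_{2g-1}=-q\,c_{2g-1}.
\]

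The remaining task, which I expect to be the main obstacle, is to pin down the top coefficient $c_{2g-1}$ exactly, including its sign.

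\begin{itemize}
\item First I use that $\deg P$ is even. This makes $\chi_P$ trivial on constants, since $\left(\frac{c}{P}\right)=c^{(q-1)\deg P/2}=1$. So $\chi_P$ is an even character, and $L(u,\chi_P)=(1-u)L^*(u)$ with $\deg L^*=2g-2$.
\item Next I identify $L^*(u)$ with the numerator of the zeta function of the hyperelliptic curve $y^2=P(x)$. This curve has genus $g-1$, since $\deg P=2g$.
\item For any curve of genus $h$, the numerator $\prod_{i=1}^{2h}(1-\alpha_i u)$ satisfies $\prod_i\alpha_i=q^{h}$, because the roots pair as $\alpha,q/\alpha$. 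So the leading coefficient of $L^*$ is $q^{g-1}$. This avoids any root-number ambiguity.
\end{itemize}

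Hence $c_{2g-1}=-q^{g-1}$, and the coefficient of $u^{2g+1}$ in $F_P$ is $q^{g}$.

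As a sanity check, I would also verify this by an elementary square-free sieve. Write $D=A^2B$. Since $\deg A\le g<2g$, we have $P\nmid A$, so $\chi_P(A^2B)=\chi_P(B)$. The sum then reduces to $\sum_{k\ge1}\mu$-weighted character sums over monic $B$ of degree $2g+1-2k<2g$, which should reproduce the same answer.

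Finally I normalize. We have $\#\mathcal{H}_{2g+1}=q^{2g+1}-q^{2g}=q^{2g}(q-1)$ and $\sqrt{|P|}=q^{g}$. Therefore
\[
M_{q,g}(P)=\frac{q^g\cdot q^g}{q^{2g}(q-1)}=\frac{1}{q-1},
\]
independently of $P$, which is the statement of Theorem \ref{thm:quadratic_murmuration}.
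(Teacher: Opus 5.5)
Your proof is correct and is essentially the paper's argument in generating-function form. The identity $F_P(u)=L(u,\chi_P)(1-qu^2)/(1-u^{4g})$ packages Lemmas~\ref{lem:average_quad_characters} and~\ref{lem:sum_mobius} (the $A^2B$ sieve, with only $\alpha=0,1$ surviving), and your facts $c_{2g+1}=0$, $c_{2g-1}=-q^{g-1}$ are exactly Lemma~\ref{lem:double_char_sum}, which the paper imports from Rudnick and you derive by the standard trivial-zero and functional-equation argument.

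Two small points should be made explicit. First, identifying $\sum_B\left(\frac{B}{P}\right)u^{\deg B}$ with the $L$-function of $y^2=P(x)$ uses quadratic reciprocity (Lemma~\ref{l.qr}) with $\deg P$ even. Second, the leading coefficient $q^{g-1}$ of $L^*$ is best read off from the functional equation $L^*(u)=(qu^2)^{g-1}L^*(1/(qu))$, since invariance of the roots under $\alpha\mapsto q/\alpha$ alone would not exclude an odd number of roots equal to $-\sqrt{q}$.
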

Via the explicit formula, the average value of the murmuration density over $\mathcal{P}_{2g}$ also appears as a deviation from the random matrix theory prediction in $\langle \mathrm{Tr} \Theta_D^n \rangle$ at $n = 2g$, as observed in \cite[Theorem 1]{rudnick2008traces}. 
We clarify the relationship between the murmuration density and this correction term further in Section~\ref{sec:discussion}. Using the aforementioned developments in homological stability, we also study this deviation from a purely cohomological perspective. 
This study yields the following strengthening of \cite[Theorem 1]{rudnick2008traces}, although with an additional constraint on $q$. 
Through the course of the proof, which rests almost entirely on \cite{bergström2024hyperellipticcurvesscanningmap} and \cite[Proposition 1.5]{miller2025uniformtwistedhomologicalstability}, we will see a cohomological expression for this correction that can be evaluated exactly. 
\begin{thm}\label{thm:cohomological_formula}
    Suppose that $2 \nmid q$ and $q>2^{\max\{4,3/A+2\}}$. 
    Then as $g \to \infty$ we have 
    \begin{equation*}
        \langle \operatorname{Tr} \Theta_D^n \rangle = \int_{\mathrm{USp}_{2g}(\mathbb{C}) } \operatorname{Tr} U^n dU - \frac{\mathbbm{1}_{n=2g}}{q-1} + O\!\left(\mathbbm{1}_{2\mid n}\left(2^{n/2}q^{-n/8}+g2^{6g+n}q^{-\lfloor2Ag+A+B\rfloor/2}\right)\right),
    \end{equation*}
    the constants $A$ and $B$ are from the uniform stable homology range\footnote{cf. Theorem~\ref{thm:stable_range}. Concretely, we can take $A=\frac{1}{34}$ and $B=-\frac{35}{34}$ 
    \cite[Proposition 1.5]{miller2025uniformtwistedhomologicalstability}.}, and $\mathbbm{1}_S$ denotes the indicator function of the subset of positive integers that satisfy the condition $S$. 
\end{thm}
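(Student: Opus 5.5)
We start from the explicit formula, which expresses the traces of Frobenius as a sum over monic polynomials. Since $\det(I-uq^{1/2}\Theta_D)=L(u,\chi_D)$ with $u=q^{-s}$, taking logarithmic derivatives gives
\begin{equation*}
    -q^{n/2}\operatorname{Tr}\Theta_D^n=\sum_{f\in\mathcal{M}_n}\Lambda(f)\chi_D(f)
\end{equation*}
for $D\in\mathcal{H}_{2g+1}$. Averaging over $D$ reduces everything to computing $\langle\chi_D(f)\rangle$ for prime powers $f=P^k$ with $k\deg P=n$. We split according to $k$.

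For $k$ even, $\chi_D(P^k)=1$ unless $P\mid D$. These terms give the $-\mathbbm{1}_{2\mid n}$ main term of the random matrix integral $\int\operatorname{Tr}U^n\,dU$, which equals $-\eta_n$ for $1\le n\le 2g$ and $-1$ on the even range, together with errors. Squares of higher order, with $k\ge 4$, contribute at most $O(q^{-n/4})$ after normalisation.

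For odd $k$, and in particular for the prime terms $k=1$, we need the average of $\chi_D(P)$ over squarefree $D$ of degree $2g+1$. When $\deg P<2g+1$ in a suitable range, or more generally whenever the relevant sums can be made equidistributed, this average is small. For $n=2g$, Theorem~\ref{thm:quadratic_murmuration} gives exactly $\langle\chi_D(P)\rangle=|P|^{-1/2}/(q-1)$. Summing over $\mathcal{P}_{2g}$, with $\#\mathcal{P}_{2g}\cdot 2g\approx q^{2g}$ and the weight $q^{-n/2}$, yields the term $-1/(q-1)$.

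The real content is controlling $\langle\chi_D(P)\rangle$ for $\deg P=n$ arbitrarily large compared to $g$, where elementary Pólya--Vinogradov bounds fail. Here I would pass to cohomology. The sum $\sum_{D}\sum_{\deg P=n}\chi_D(P)$ counts $\mathbb{F}_q$-points on the configuration space of squarefree $D$, twisted by the local system whose trace at $D$ is $\sum_{f\in\mathcal{M}_n}\Lambda(f)\chi_D(f)$, namely $-q^{n/2}\operatorname{Tr}\Theta_D^n$. This local system is the $n$-th Adams operation applied to $H^1$ of the hyperelliptic curve. Expand $\operatorname{Tr}\Theta^n$ via $\psi^n$ as an alternating sum of symplectic irreducible representations $V_\lambda$ with $|\lambda|\le n$, with coefficients of total size at most about $2^n$.

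Then apply the Grothendieck--Lefschetz trace formula on $\mathrm{Conf}_{2g+1}$, which is the moduli space relevant to $\mathcal{H}_g^{1,0}$. The stable homology with coefficients in $V_\lambda$ is computed by \cite{bergström2024hyperellipticcurvesscanningmap}. Uniform twisted stability, with range $i\le A\cdot 2g+B$ from Theorem~\ref{thm:stable_range} (as in \cite[Proposition 1.5]{miller2025uniformtwistedhomologicalstability}), lets us replace unstable cohomology by the stable answer, at a cost of $q^{-\lfloor 2Ag+A+B\rfloor/2}$ times Betti number bounds of size roughly $g\,2^{6g+n}$. The condition $q>2^{\max\{4,3/A+2\}}$ ensures that this error tends to $0$ and that the series over stable degrees converges. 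The stable contribution, evaluated using the theorems of \cite{bergström2024hyperellipticcurvesscanningmap}, should reproduce the random matrix main term plus the $\mathbbm{1}_{n=2g}/(q-1)$ correction. The remaining stable classes of positive weight deficiency give the $2^{n/2}q^{-n/8}$ error, which is nonzero only when $n$ is even by the parity symmetry $D\mapsto cD$ for a nonsquare $c$.

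The main obstacle is this last identification. We must match the stable twisted cohomology with the exact main term, and show that the only surviving correction is exactly $-1/(q-1)$ at $n=2g$. Matching it with Theorem~\ref{thm:quadratic_murmuration} as a consistency check is the route I would try first.
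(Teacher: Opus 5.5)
\textbf{There is a genuine gap.} Your framework is the paper's: the Grothendieck--Lefschetz formula on $\mathcal{H}_g^{1,0}\simeq\mathrm{Conf}_{2g+1}$ with coefficients in the symplectic irreducibles $\mathbb{V}_\lambda$, uniform stability, and an unstable-range bound of the form $\binom{2g}{k}\dim\mathbb{V}_\lambda\,q^{-k/2}$ via Deligne. The gap is exactly what you call the main obstacle: you never compute the stable contribution, and that computation is the content of the theorem. Two ingredients are missing.

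First, ``an alternating sum of $V_\lambda$ with $|\lambda|\le n$'' is too coarse. You need the exact branching of $p_n$ on $\mathrm{Sp}_{2g}$ (Lemma~\ref{lem:splitting_rule}). It consists of the trivial character with coefficient $-\mathbbm{1}_{2\mid n}$ (only when $n\le 2g$), the signed hooks $(n-\ell,1^\ell)$ for $\ell<\min(n,g)$, and two families created by the rank-$g$ truncation: the hooks $(a,1^{2g-n+a})$ for $g<n\le 2g$ and $(n-2g+a,1^a)$ for $n>2g$. The correction term lives entirely in the family $(a,1^{2g-n+a})$.

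Second, you need a way to extract the alternating stable Poincar\'e series of precisely these hooks from Theorem~\ref{thm:poincare_series}. The paper specialises that generating function at the virtual alphabet $t-t^{-1}$. Non-hook Schur functions then vanish (Lemma~\ref{lem:non_hook_vanish}) and the right-hand side collapses to $1-zt^2$ (Lemma~\ref{lem:series_LHS}). One reads off (Corollary~\ref{cor:series_evaluation}) that $\sum_{\ell\ge1}(-1)^\ell\sum_k(-z)^k\dim H_k(\mathcal{H}^{1,0}_\infty,\mathbb{V}_{(\ell,1^{\Delta+\ell})})$ equals $z$ if $\Delta=0$ and $0$ otherwise. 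Taking $\Delta=2g-n$ and using purity (Theorem~\ref{thm:purity}, so $z\mapsto q^{-1}$) gives $\mathbbm{1}_{n=2g}/q$. The prefactor $-q/(q-1)$ turns this into $-1/(q-1)$, and the trivial character contributes $1-q^{-1}$, which gives the $-1$.

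The remaining hooks are handled as follows:
\begin{itemize}
\item Theorems~\ref{thm:vanishing_lambda1} and~\ref{thm:vanishing_ell_lambda} kill them stably, which is why the answer is $0$ for $n>2g$.
\item When $n\le g$, the specialisation at $t-(-t)$ (Lemma~\ref{lem:necklace}) plus a Cauchy estimate gives the $2^{n/2}q^{-n/8}$ term. That term does not come from ``stable classes of positive weight deficiency'': stable classes are pure Tate.
\item The stable identity runs over all $\ell\ge1$, while the finite-$g$ decomposition stops at $a=n-g-1$, so the tail must be bounded (Proposition~\ref{prop:correction_term}).
\end{itemize}
That tail bound, together with $q>16$ in Corollary~\ref{cor:series_bound}, is what the hypothesis on $q$ is for. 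The hypothesis does not make $g2^{6g+n}q^{-\theta(g)/2}$ tend to $0$, and the theorem does not claim that it does.

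The elementary half of your plan does not close the gap. Comparing with Theorem~\ref{thm:quadratic_murmuration} is a consistency check at $n=2g$ only; it does not compute any stable homology. Once you work with $\psi^n\mathbb{V}$ cohomologically, all prime powers and all $n$ are handled at once, so splitting by $k$ is unnecessary. The split is also misleading for $n>2g$: there the squares still contribute $-1$ while the random matrix answer is $0$, so the prime terms must cancel them rather than being small.

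Your parity argument for odd $n$ is fine once you use the monic twist $D(T)\mapsto c^{2g+1}D(T/c)$ with $c$ a nonsquare. This is a bijection of $\mathcal{H}_{2g+1}$ sending $\Theta_D$ to $-\Theta_D$. The paper uses Proposition~\ref{prop:vanishing_odd} instead.
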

One sees that the murmuration density appears in the main term when $n=2g$, which is related to the fact that it is constant on $\mathcal{P}_{2g}$. 
The $2^{n/2}q^{-n/8}$ error term hides the well-understood arithmetic correction corresponding to even prime powers. 
Note that we have not made an effort to optimize the constants in the error term or the lower bound on $q$. 
We also note that a similar result follows from the work of Wang \cite{wang2024noteszetaratiostabilization} and refer to Section~\ref{sec:discussion} for a further discussion. 

Using Theorem~\ref{thm:cohomological_formula}, we recover the one-level density in the following form. For any even Schwartz function $f \in \mathcal{S}(\mathbb{R})$ and any $N \geq 1$ set
\begin{equation*}
    F(\theta) := \sum_{k\in \Z} f\left(N \left(\frac{\theta}{2\pi}-k\right)\right),
\end{equation*}
which has period $2\pi$ and is localized in an interval of size $\approx 1/N$ in $\mathbb{R}/2\pi \mathbb{Z}$. For a unitary $N \times N$ matrix $U$ with eigenvalues $e^{i\theta_j}$ define
\begin{equation}\label{eq:def_ZfU}
    Z_f(U) := \sum_{j=1}^{N} F(\theta_j).
\end{equation}

\begin{cor}\label{cor:one_level_density}
    Suppose that $2 \nmid q$ and $q>2^{\max\{4,3/A+2\}}$.
    Let $f \in \mathcal{S}(\mathbb{R})$ be even with Fourier transform $\widehat f$ supported in $(-v,v)$. 
    Then there exists constants $C_j(q)$ such that for any $J \geq 1$ 
    \begin{align*}
        \langle Z_f(\Theta_D) \rangle &= \int_{\mathrm{USp_{2g}(\mathbb{C})}} Z_f(U)dU - \frac{\widehat{f}(1)}{g(q-1)} + \sum_{j= 0}^{J-1}C_j(q) \widehat{f}^{(2j)}(0)g^{-2j-1} \\
        &+O_{J,f}\left(g^{-2J}\right) + O_f\left(vg2^{(6+2v)g}q^{-\lfloor 2Ag+A+B\rfloor /2}\right)
    \end{align*}
     as $g \to \infty$.
\end{cor}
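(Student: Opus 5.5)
Expand $F$ in Fourier series and feed each coefficient into Theorem~\ref{thm:cohomological_formula}. With $N=2g$, Poisson summation gives
\begin{equation*}
F(\theta)=\frac{1}{N}\sum_{n\in\Z}\widehat f\!\left(\frac{n}{N}\right)e^{in\theta}.
\end{equation*}
Since $f$ is even, $\widehat f$ is even, and the eigenvalues of a symplectic matrix come in conjugate pairs. Summing over eigenvalues therefore yields
\begin{equation*}
Z_f(U)=\widehat f(0)+\frac{2}{N}\sum_{n\ge1}\widehat f\!\left(\frac{n}{N}\right)\operatorname{Tr}U^n,
\end{equation*}
and because $\operatorname{supp}\widehat f\subset(-v,v)$, the sum is finite, running over $1\le n< vN=2vg$.

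Apply this identity both to $\Theta_D$, averaged over $\mathcal H_{2g+1}$, and to Haar-random $U\in \mathrm{USp}_{2g}(\mathbb C)$. Subtracting, the difference $\langle Z_f(\Theta_D)\rangle-\int Z_f(U)\,dU$ equals $\frac{1}{g}\sum_{n<2vg}\widehat f(n/2g)\bigl(\langle\operatorname{Tr}\Theta_D^n\rangle-\int\operatorname{Tr}U^n dU\bigr)$. Theorem~\ref{thm:cohomological_formula} supplies each term, and there are three contributions.

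\textbf{The $n=2g$ term.} The indicator $-\mathbbm 1_{n=2g}/(q-1)$ contributes exactly $-\widehat f(1)/(g(q-1))$.

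\textbf{The large-$g$ error.} The term $g2^{6g+n}q^{-\lfloor 2Ag+A+B\rfloor/2}$ is multiplied by $\frac1g\|\widehat f\|_\infty$ and summed over $n<2vg$. Since $2^{n}\le 2^{2vg}$ and there are at most $2vg$ values of $n$, this gives $O_f\bigl(vg2^{(6+2v)g}q^{-\lfloor2Ag+A+B\rfloor/2}\bigr)$, as claimed.

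\textbf{The arithmetic error.} The term $\mathbbm 1_{2\mid n}2^{n/2}q^{-n/8}$ must be handled more precisely to produce the constants $C_j(q)$ and the expansion in powers of $g^{-1}$. As noted after Theorem~\ref{thm:cohomological_formula}, it hides the even prime-power correction. I would go back into the proof of that theorem and extract this piece exactly as $\mathbbm 1_{2\mid n}\,c_n(q)$, where $c_n(q)$ is an explicit coefficient: essentially the mean of $\chi_D(P^2)$ contributions, that is $\frac{1}{q^{n/2}}\sum_{\deg P\mid n/2}\deg P\cdot(\text{correction})$. This coefficient satisfies $|c_n(q)|\ll 2^{n/2}q^{-n/8}$, so it decays geometrically in $n$ once $q>16$, which the hypothesis $q>2^4$ guarantees.

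The contribution of this piece is $\frac1g\sum_{m\ge1}c_{2m}(q)\widehat f(m/g)$. Taylor-expand $\widehat f(m/g)$ about $0$; only even derivatives survive since $\widehat f$ is even. This gives
\begin{equation*}
\sum_{j<J}\frac{\widehat f^{(2j)}(0)}{(2j)!}g^{-2j-1}\sum_m c_{2m}(q)m^{2j}+O\Bigl(g^{-2J-1}\sum_m|c_{2m}|m^{2J}\Bigr).
\end{equation*}
Extending the $m$-sums to infinity costs only a tail bound of size $O(g^{-2J})$ by the geometric decay. We then set $C_j(q)=\frac{1}{(2j)!}\sum_m c_{2m}(q)m^{2j}$.

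The main obstacle is exactly this step. Theorem~\ref{thm:cohomological_formula} as stated records only an upper bound for the even-$n$ term, so I must confirm from its proof that the even-$n$ deviation splits as an exact $g$-independent quantity $c_n(q)$ plus the stable-range error. If $c_n$ instead depends mildly on $g$, I would absorb the difference into the second error term.
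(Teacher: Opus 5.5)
Your proposal is correct and follows the paper's proof essentially step by step. Like the paper, you use the Fourier expansion with $N=2g$ and apply Theorem~\ref{thm:cohomological_formula} term by term. The indicator at $n=2g$ gives $-\widehat f(1)/(g(q-1))$, the stable-range error summed over $n<2vg$ gives the second $O_f$ term, and a Taylor expansion of $\widehat f(m/g)$ against the geometrically decaying even-$n$ correction defines $C_j(q)$.

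The step you flag is handled the same way in the paper. It takes $c_m$ to be the $g$-independent arithmetic correction, identified via Rudnick as $q^{-m}\sum_{\deg P\mid m}\deg P/(|P|+1)$. Your fallback also works: define $c_n$ as the full stable hook sum, and absorb both the truncation at $k\le\theta(g)$ and the $n>g$ cases into the second error term using Corollary~\ref{cor:series_bound}.
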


The constants $C_j(q)$ can be expressed as sums over primes as in \cite{rudnick2008traces}. 
We highlight that the correction term appears as a lower-order term in the one-level density. 
Here, the murmuration density is least visible, as we have now averaged over primes of degree $2g$, and also averaged over degrees asymptotic to $2g$. 
This also implies a full density nonvanishing result in the double limit where $g \to \infty$ first and then $q \to \infty$.
\begin{cor}\label{cor:non_vanishing}
    Let $v > 0$ and suppose that $2 \nmid q$ and
    $q>\max\{2^{\frac{3}{A}+2},2^\frac{6+2v}{A}\}$. 
    Then
    \begin{equation*}
        \liminf_{g \to \infty}\frac{\#\{D \in \mathcal{H}_{2g+1} : L(1/2,\chi_D) \neq 0\}}{\#\mathcal{H}_{2g+1}} \geq 1 - \frac{1}{4v^2}.
    \end{equation*}
    In particular,
    \begin{equation*}
        \lim_{q \to \infty} 
        \liminf_{g \to \infty} \frac{\#\{D \in \mathcal{H}_{2g+1} : L(1/2,\chi_D) \neq 0\}}{\#\mathcal{H}_{2g+1}} = 1.
    \end{equation*}
\end{cor}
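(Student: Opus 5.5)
We use the one-level density of Corollary~\ref{cor:one_level_density} with a nonnegative test function whose Fourier transform is supported in $(-v,v)$. This is the standard Iwaniec--Luo--Sarnak / Hughes--Rudnick route.

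The test function is $f(x) = \left(\frac{\sin(\pi v x)}{\pi v x}\right)^2$, the Fejér kernel. It is nonnegative, even, satisfies $f(0)=1$, and $\widehat f(u) = \frac{1}{v}\max\{0,1-|u|/v\}$, supported in $[-v,v]$. If we insist on the open support $(-v,v)$, we take $v'<v$ and let $v'\to v$ at the end. Since $f$ is not Schwartz, we either approximate it by Schwartz functions with the same support property, or note that the argument behind Corollary~\ref{cor:one_level_density} only uses finitely many derivatives of $\widehat f$ and decay of $f$.

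We set $N=2g$, so that $Z_f(U)=\sum_j F(\theta_j)$. The self-duality of the family forces the zero at the central point to have even multiplicity. If $L(1/2,\chi_D)=0$, then $\Theta_D$ has eigenvalue $1$ with multiplicity $m_D\ge 2$, and the conjugate pairs $\theta_j,-\theta_j$ force $m_D$ even. Positivity of $F$ then gives $Z_f(\Theta_D)\ge m_D f(0)\ge 2\,\mathbbm{1}_{L(1/2,\chi_D)=0}$. Averaging,
\[
\frac{\#\{D: L(1/2,\chi_D)=0\}}{\#\mathcal{H}_{2g+1}} \le \tfrac12 \langle Z_f(\Theta_D)\rangle .
\]

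For the right side we apply Corollary~\ref{cor:one_level_density}. The correction $\widehat f(1)/(g(q-1))$ and the terms $C_j(q)\widehat f^{(2j)}(0)g^{-2j-1}$ are $O(1/g)$ for $J=1$. The term $O(vg2^{(6+2v)g}q^{-\lfloor 2Ag+A+B\rfloor/2})$ tends to zero provided $q^{A}>2^{6+2v}$, which is exactly the hypothesis $q>2^{(6+2v)/A}$. The condition $q>2^{3/A+2}$ is needed for the corollary itself, and $q>2^4$ holds automatically since $A<1$. Hence $\limsup_g \langle Z_f\rangle = \lim_g \int_{\mathrm{USp}_{2g}} Z_f\,dU$.

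The symplectic one-level density gives $\int Z_f\,dU \to \int f(x)W_{\mathrm{USp}}(x)\,dx$ with $W(x)=1-\frac{\sin 2\pi x}{2\pi x}$, equivalently $\widehat f(0)-\frac12\int_{-1}^{1}\widehat f(u)\,du$. This is classical and in fact exact in $g$, via Rudnick's computation of $\int \operatorname{Tr}U^n\,dU$. For $v\ge1$ and the Fejér kernel we have $\widehat f(0)=1/v$, and
\[
\tfrac12\int_{-1}^1\widehat f = \frac1v\Bigl(1-\frac1{2v}\Bigr).
\]
This yields
\[
\int f W = \frac1v - \frac1v + \frac1{2v^2} = \frac1{2v^2}.
\]
Therefore the proportion of vanishing is at most $\frac{1}{4v^2}$, which gives the first claim. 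The bound is trivial for $v<1$, so we may assume $v\ge1$ throughout.

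For the second statement, fix any $v$. For $q$ large enough the hypotheses hold, so the $\liminf$ is at least $1-1/(4v^2)$. Letting $q\to\infty$ and then $v\to\infty$ gives $1$.

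The main obstacle is the first step: justifying that Corollary~\ref{cor:one_level_density} applies to the non-Schwartz Fejér kernel. We would handle this by a smooth minorant/majorant approximation, convolving $\widehat f$ with a narrow bump and adjusting the support slightly. The loss is an $\varepsilon$ that vanishes in the limit.
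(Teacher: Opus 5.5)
Your proposal follows the paper's proof essentially step for step. The ingredients match: the positivity and even-multiplicity bound $p_0(g)\ge 1-\tfrac12\langle Z_f(\Theta_D)\rangle$, Corollary~\ref{cor:one_level_density} with $q>2^{(6+2v)/A}$ making the unstable-range error vanish, and the Fej\'er kernel reached through nonnegative Schwartz approximants. To keep those approximants nonnegative, the bump you convolve $\widehat f$ with should be an autocorrelation $\phi*\widetilde{\phi}$, whose inverse transform $|\check\phi|^2$ is $\ge 0$.

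One slip: the bound is not trivial for $\tfrac12<v<1$. The same kernel still handles this range. Since $\operatorname{supp}\widehat f\subseteq[-1,1]$, you get $\int f(x)\bigl(1-\tfrac{\sin 2\pi x}{2\pi x}\bigr)\,dx=\tfrac1v-\tfrac12$. This gives a non-vanishing proportion of at least $\tfrac54-\tfrac1{2v}=1-\tfrac1{4v^2}+\tfrac14\bigl(1-\tfrac1v\bigr)^2$, which is enough.
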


We recall that there are several related non-vanishing results over function fields in the literature.
Bui and Florea \cite{Bui-Florea} computed the one-level density of quadratic Dirichlet $L$-functions over function fields and proved that $94\%$ of the members of this family do not vanish at the central point. David, Florea, and Lal\'in \cite{DFL} computed the first moment to prove that infinitely many central values of $L$-functions associated to cubic characters over $\mathbb{F}_q[T]$ do not vanish, and later used mollified moments to prove a positive proportion of non-vanishing when $q \equiv 2 \pmod 3$ \cite{DFL2}. This is the non-Kummer setting, in which the base field does not contain the cubic roots of unity.

Li \cite{LiHyperellipticVanishing} showed, by algebro-geometric methods, that central vanishing can nevertheless occur in families of quadratic characters over function fields. This was extended to higher-order characters by Donepudi and Li \cite{DonepudiLi}. Ellenberg, Li, and Shusterman \cite{EllenbergLiShusterman} proved related non-vanishing results for Dirichlet $L$-functions associated to characters of order $\ell$, giving bounds for the proportion of vanishing central values that tend to zero as $q\to\infty$ in the ranges they consider. Later, David, Florea, and Lal\'in \cite{DFL3} computed the one-level density to prove a positive proportion of non-vanishing for families of $\ell$-th order characters.

Our argument is different in nature. It proceeds through the one-level density obtained from homological stability and gives a full-density non-vanishing statement in the double limit.

The remainder of our results explore the murmuration density for a family of primitive cubic characters in the Kummer case, following \cite{DFL} for notation.  
Let $q\equiv 1\pmod{3}$ and fix a prime $P$ of degree $n$. We consider the family 
\begin{equation*}
    \mathcal{F}_3(d):=\left\{\chi_c=\left(\frac{\cdot}{c}\right)_3\, :\, c\in \mathcal{H}_d\right\}.
\end{equation*}
When $d = 2n$, we define the non-trivial murmuration density $\widetilde{M}_{q,d} : \mathcal{P}_{n} \to \mathbb{C}$ by
\begin{equation*}
    \widetilde{M}_{q,d}(P) := \frac{1}{\# \mathcal{F}_3(d)} \sum_{\chi_c  \in \mathcal{F}_3(d)} \omega(\chi_c) \overline{\chi_c}(P)\sqrt{|P|} = \langle  \omega(\chi_c) \overline{\chi_c}(P)\sqrt{|P|} \rangle,
\end{equation*}
where $\omega(\chi_c)$ is the root number of $\chi_c$.
The large conductor limit gives the following non-constant function of $P$.

\begin{thm}\label{thm:cubic_Kummer}
Let $q\equiv 1\pmod{3}$ and $d = 2n$. For every $P \in \mathcal{P}_n$ and $\varepsilon > 0$, we have
\begin{equation*}
    \widetilde{M}_{q,d}(P) = \overline{\omega(\chi_P)} + O\left(q^{(\varepsilon-\frac16) d}\right)
\end{equation*}
as $d \to \infty$.
\end{thm}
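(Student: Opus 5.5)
The plan is to rewrite the average as an average of shifted cubic Gauss sums and read off the term $\overline{\omega(\chi_P)}$ from the functional equation of the generating series in the $c$-variable, following \cite{DFL}. The main term is forced by the exact matching $d=2n$.

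\emph{Reduction to shifted Gauss sums.} In the Kummer case the root number of $\chi_c$, for $c\in\mathcal H_d$, is a normalised Gauss sum, $\omega(\chi_c)=\epsilon_d\,q^{-d/2}G(\chi_c)$, where $\epsilon_d$ is an explicit unimodular factor depending only on $d$. For $(P,c)=1$ the shifted Gauss sum satisfies $G(P,\chi_c)=\overline{\chi_c}(P)G(\chi_c)$. The terms with $P\mid c$ have $\overline{\chi_c}(P)=0$ and drop out. Hence
\begin{equation*}
\widetilde M_{q,d}(P)=\frac{\epsilon_d\,q^{-d/2}|P|^{1/2}}{\#\mathcal H_d}\sum_{c\in\mathcal H_d}G(P,\chi_c).
\end{equation*}
Since $\#\mathcal H_d\asymp q^d$ and $|P|^{1/2}=q^{d/4}$, we must show
\begin{equation*}
\sum_{c\in\mathcal H_d}G(P,\chi_c)=\overline{\epsilon_d}\,\overline{\omega(\chi_P)}\,q^{3d/2}|P|^{-1/2}\,(1-q^{-1})+O\big(q^{(5/4+\varepsilon-1/6)d}\big).
\end{equation*}

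\emph{Generating series.} Next we study $\Psi(P,u)=\sum_{c\ \mathrm{squarefree}}G(P,\chi_c)u^{\deg c}$. As in \cite{DFL}, we pass to the full sum over all monic $c$ and remove the non-squarefree part with a Möbius-type sieve. The full series is the Kubota–Patterson cubic theta series in function-field form. It is a rational function of $u$, with possible poles at $u^3=q^{-4}$, and it satisfies a functional equation exchanging $u\mapsto q^{-2}u^{-1}$ (with $c$ twisted by $P$).
\begin{itemize}
\item The residue at $u^3=q^{-4}$ is proportional to the Patterson coefficient $\tau(P)$. Since $P$ is prime and not a cube, this contributes only a term of size $\ll q^{4d/3}|P|^{-1/6}$, which we absorb into the error.
\item The genuine main term comes from the dual side of the functional equation. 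Because $\deg c=d=2\deg P$ sits exactly at the length of the dual sum, that sum collapses to its single term $c'=1$. That term carries the Gauss sum $\overline{G(\chi_P)}$, equivalently $\overline{\omega(\chi_P)}$ after normalisation.
\end{itemize}
Equivalently, and as a cross-check, one may apply the cubic reciprocity law in $\mathbb F_q[T]$ with $q\equiv1\pmod 3$, which gives $\chi_c(P)=\chi_P(c)$ for monic $c,P$. Then use the twisted multiplicativity $G(\chi_{c_1c_2})=\chi_{c_1}(c_2)^2\chi_{c_2}(c_1)^2\ldots$ to isolate the diagonal configuration. I would carry out the computation via the generating series and contour integration: extract the coefficient of $u^d$ by integrating on $|u|=q^{-3/2-\delta}$, then shift the contour.

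\emph{Main obstacle: the error term.} The hardest step is bounding the remaining contour integral uniformly in $P$ with a saving of $q^{-d/6}$. I would first try the convexity (Phragmén–Lindelöf) bound for $\Psi(P,u)$ between the line of absolute convergence and its functional-equation reflection. On $|u|=q^{-4/3-\varepsilon}$ this should give
\begin{equation*}
|\Psi(P,u)|\ll |P|^{1/4+\varepsilon},
\end{equation*}
which yields a total error $\ll q^{4d/3+\varepsilon d}|P|^{1/4}$. Relative to the main term $q^{3d/2}|P|^{-1/2}=q^{5d/4}$, and after multiplying by the normaliser $q^{-d/2}|P|^{1/2}/q^d$, this is $O(q^{(\varepsilon-1/6)d})$, as claimed. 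I would also check that the sieve over square factors $c=a^2b$ only worsens the error by $q^{\varepsilon d}$. The delicate point is tracking the exact constant, including $\epsilon_d$ and the factor $1-q^{-1}$ matching $\#\mathcal H_d=q^d(1-q^{-1})$, so that the main term is exactly $\overline{\omega(\chi_P)}$.
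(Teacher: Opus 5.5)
Your reduction to $\sum_{c\in\mathcal H_d}G(P,\chi_c)$ is the same as the paper's: the root number is a normalised Gauss sum, and $G(P,\chi_c)=\overline{\chi_c}(P)G(\chi_c)$ for $(c,P)=1$. The analytic step, however, misidentifies the main term.

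The residue at $u^3=q^{-4}$ is not negligible. You estimate it correctly as $q^{4d/3}|P|^{-1/6}$. With $|P|=q^{d/2}$ this equals $q^{5d/4}$, which is exactly the size of your target $q^{3d/2}|P|^{-1/2}=q^{5d/4}$. So it cannot be absorbed into $O(q^{(5/4-1/6+\varepsilon)d})$.

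In fact this residue \emph{is} the main term. By \cite[Proposition 3.1]{DFL}, since $[d+n]_3=[3n]_3=0$ and $\rho(1,0)=1$, the polar contribution to $\sum_{c\in\mathcal M_d,\,(c,P)=1}G_q(P,c)$ is $q^{4d/3-2n/3}\zeta_q(2)^{-1}\,\overline{G_q(1,P)}\,(1+|P|^{-1})^{-1}$. At $d=2n$ we have $q^{4d/3-2n/3}=q^{d}$ and $q^d/\zeta_q(2)=\#\mathcal H_d$. Moreover $G_q(1,P)=\epsilon(n)\omega(\chi_P)|P|^{1/2}$ with $\overline{\epsilon(n)}=\epsilon(d)$, since $\chi_3(-1)=1$. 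After your normalisation the polar term is therefore exactly $\overline{\omega(\chi_P)}(1+|P|^{-1})^{-1}$.

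This is how the matching $d=2n$ enters: for general $d$ the normalised polar term has size $q^{-d/6}|P|^{1/3}$, which is of order $1$ precisely when $|P|=q^{d/2}$. Your claimed collapse of the dual sum to the single term $c'=1$ has no justification. There is also no second main term of this shape: once the contour is moved to the left of the pole, everything that remains is of lower order.

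The error analysis also fails as written, for three reasons.
\begin{itemize}
\item The circle $|u|=q^{-4/3-\varepsilon}$ lies strictly inside the pole circle $|u|=q^{-4/3}$. Moving there from $|u|=q^{-3/2-\delta}$ crosses no pole, so this integral is just the full coefficient and cannot isolate any main term.
\item The convexity exponent $\frac12(\frac32-\sigma)$ at $\sigma=\frac43+\varepsilon$ is about $\frac1{12}$, not $\frac14$; the value $\frac14$ corresponds to $\sigma=1$.
\item Even granting your bound, $q^{4d/3}|P|^{1/4}/q^{5d/4}=q^{5d/24}$ grows rather than decays.
\end{itemize}

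What is needed is to shift past the pole to $|u|=q^{-\sigma}$ with $\sigma=\frac23+\varepsilon$ and use $|\Psi(P,u)|\ll|P|^{\frac12(\frac32-\sigma)+\varepsilon}$. This gives an error $O(q^{\sigma d+\frac n2(\frac32-\sigma)})$, that is, $O(q^{(\varepsilon-3/8)d})$ after normalisation. It sits alongside the $O(q^{d/3+n/6+\varepsilon d})$ term, whose relative size is $q^{(\varepsilon-5/6)d}$.

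The paper does precisely this by quoting \cite[Proposition 3.1]{DFL} in two cases: $3\nmid d$ (odd $\chi_c$) and $3\mid d$ (even $\chi_c$). The latter case gives the weaker $O(q^{(\varepsilon-1/6)d})$ that appears in the statement.
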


\subsection{Discussion}\label{sec:discussion}
In this section, we outline the connection between the murmuration density and the average traces $\langle \operatorname{Tr} \Theta_D^n \rangle$. 
Taking the logarithmic derivative of the identity
\begin{equation*}
    \det(I-uq^{1/2} \Theta_D) = \prod_{P} (1-\chi_D(P)u^{\deg P})^{-1}
\end{equation*}
 and comparing coefficients of $u^n$, yields the explicit formula
\begin{equation*}
    \operatorname{Tr} \Theta_D^n = -\frac{1}{q^{n/2}} \sum_{f\in \mathcal{M}_n} \Lambda(f)\chi_D(f).
\end{equation*}
Separating out the higher prime powers and averaging over $\mathcal{H}_{2g+1}$ yields
\begin{equation*}
    \langle \operatorname{Tr} \Theta_D^n \rangle = -\frac{n}{q^{n}} \sum_{P\in \mathcal{P}_n} \langle \chi_D(P) \sqrt{|P|} \rangle - \frac{n}{q^{n/2}} \sum_{\substack{P\in \mathcal{P}\\\deg P^k = n \\ k \geq 2}} \langle \chi_D(P^k) \rangle,
\end{equation*}
and we recognize that when $n = 2g$ we have 
\begin{equation*}
    \frac{n}{q^{n}} \sum_{P\in \mathcal{P}_n} \langle \chi_D(P) \sqrt{|P|} \rangle = \mathbb{E}_{\mathcal{P}_{2g}}[M_{q,g}(P)] + O\left(\frac{1}{q^{n/2}}\right)
\end{equation*}
by the prime number theorem for $\mathbb{F}_q[T]$. 
The one-level density now follows easily from a Fourier expansion on $\mathrm{USp}(2g)$. 

Observe that the traces     $\langle \operatorname{Tr} \Theta_D^n \rangle$ and the one-level density only see the average value of the murmuration density at the transition range $n = 2g$. 
On the other hand, the murmuration density captures the contribution of individual primes to the zero-statistics of the family. 
In this sense, the murmuration density is a much finer statistical feature of the family than the one-level density. 
This connection to one-level density is more complicated in the case of non-self-dual families. One may be able to relate the murmuration density to a linear combination of the expectations $\langle \chi_c(P)\sqrt{|P|}\rangle$ taken over subfamilies along which the root number is fixed. One can of course consider the average $\langle \chi_c(P)\sqrt{|P|}\rangle$ over the whole family instead, but we will not pursue this here.

Next, we comment on the difference between our proof of the one-level density and the one given in \cite{wang2024noteszetaratiostabilization}. 
Wang's proof extends the idea in \cite{bergström2024hyperellipticcurvesscanningmap} to not only prove the moments for the family of quadratic characters, but to prove the ratios conjecture for this family, up to a restriction on the distance to the critical line. 
The ratios conjecture then captures the one-level density as explained by Andrade and Keating \cite{Andrade_2014}. 
Both proofs ultimately use the Grothendieck trace formula to convert the problem into cohomology and the stable range for $\mathcal{H}_g^{1,0}$ to compute the main term. 
As such, they are both limited by the error term obtained from bounding the unstable range, and this is the main obstruction to improvement. 
The difference is that our proof is more direct, expressing the trace averages $\langle \operatorname{Tr} \Theta_D^n \rangle$ in terms of cohomology and computing the contribution of the stable range. 
This simplifies the combinatorics that one must deal with, and it removes the need for any complex-analytic input. 
On the other hand, Wang establishes a more general result from which one can study $n$-level density.

Regarding cubic characters, to prove Theorem~\ref{thm:cubic_Kummer}, it suffices to obtain precise asymptotics for averages of cubic Gauss sums over function fields, which was done in \cite{DFL} with applications to non-vanishing. Number-field analogues of these results were established by G\"ulo\u{g}lu \cite{Guloglu2025NonVanishing}. Moreover, David and G\"ulo\u{g}lu \cite{DavidGuloglu2022OneLevelDensity} computed the one-level density past the support $(-1,1)$. We expect that similar techniques could be used to prove a number-field analogue of Theorem~\ref{thm:cubic_Kummer}, although we do not pursue this direction here. 

Finally, we provide a brief dictionary of how the scales involved translate to the number field setting where $\log p$ plays the role of $n$ and the logarithm of the conductor $\log N$ plays the role of $g$. 
The crudest statistic is the one-level density, which arises from the limit $n,g \to \infty$ (resp. $\log p, \log N \to \infty$) with $n/2g \to \tau$ (resp. $\log p/\log N \to \tau$). 
This agrees to first order with the random matrix theory prediction, although computing lower order terms reveals arithmetic corrections of order $g^{-1}$ (resp. $(\log N)^{-1}$) at the point $\tau = 1$. 
A finer statistic is captured by the traces $\langle \operatorname{Tr} \Theta_D^n \rangle$, which allows us to see the contribution of primes from a fixed degree $n$. 
In particular, this captures the arithmetic correction at $\tau = 1$ arising from primes $P \in \mathcal{P}_{2g}$ (resp. $p \asymp N$). 
Finer yet is the murmuration, where we have now zoomed into the point $\tau = 1$ and observe the contribution of individual (or small intervals of) primes $P \in \mathcal{P}_{2g}$ (resp. $p \sim yN$). 
Note that the result is strongest when one takes fewer primes, with a single prime as in \cite{HLOP} or \cite{Zubrilina} giving the ideal result.

\subsection*{Acknowledgements}
The authors gratefully acknowledge Peter Sarnak for suggesting that the correction term in Rudnick's theorem is closely related to murmurations, as well as many useful discussions throughout the course of this work. They also thank Dan Petersen and Victor Wang for clarifying discussions and useful feedback on an earlier draft, as well as Alexandra Florea for helpful discussions on cubic characters.

\section{Proof of Theorem~\ref{thm:quadratic_murmuration}}

The proof of Theorem~\ref{thm:quadratic_murmuration} mimics the analysis used by Rudnick in \cite[Section 4]{rudnick2008traces}. 
We take the following Lemmas from \cite{rudnick2008traces}:
\begin{lem}\label{lem:average_quad_characters} 
    Let $D\in\mathbb{F}_q[T]$ be monic of positive degree.
    For monic $P\in\mathbb{F}_q[T]$, we have:
    \begin{equation*}
        \langle \chi_D(P) \rangle = \frac{1}{(q-1)q^{2g}} \sum_{2\alpha + \beta =2g+1} \sigma(P;\alpha) \sum_{D \in \mathcal{M}_\beta} \left(\frac{D}{P}\right),
    \end{equation*}
    where
    \begin{equation*}
        \sigma(P;\alpha) = \sum_{\substack{A \in \mathcal{M}_\alpha\\ (A,P)=1}} \mu(A).
    \end{equation*}
\end{lem}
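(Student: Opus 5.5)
We would prove the lemma by Möbius inversion to detect square-freeness, exactly as in Rudnick's setup. Here $\langle\cdot\rangle$ is the average over $D\in\mathcal{H}_{2g+1}$, and $\chi_D(P)=\left(\frac{D}{P}\right)$ is the Jacobi symbol in $D$ modulo $P$, which is completely multiplicative in $D$. The first ingredient is the count $\#\mathcal{H}_{2g+1}=q^{2g+1}-q^{2g}=(q-1)q^{2g}$. We would obtain it from the standard generating-function identity
\[
\sum_{n}\#\mathcal{H}_n u^n=\frac{Z(u)}{Z(u^2)}=\frac{1-qu^2}{1-qu},
\]
where $Z(u)=(1-qu)^{-1}$ is the zeta function of $\mathbb{F}_q[T]$. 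This explains the prefactor $\frac{1}{(q-1)q^{2g}}$.

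Next we detect square-freeness using $\sum_{A^2\mid D}\mu(A)=\mathbbm{1}_{D \text{ square-free}}$, with $A$ ranging over monic divisors. This gives
\[
\sum_{D\in\mathcal{H}_{2g+1}}\left(\frac{D}{P}\right)=\sum_{D\in\mathcal{M}_{2g+1}}\left(\frac{D}{P}\right)\sum_{A^2\mid D}\mu(A).
\]
Writing $D=A^2B$ with $\deg A=\alpha$ and $\deg B=\beta$, so that $2\alpha+\beta=2g+1$, and using multiplicativity $\left(\frac{A^2B}{P}\right)=\left(\frac{A}{P}\right)^2\left(\frac{B}{P}\right)$, the sum becomes
\[
\sum_{2\alpha+\beta=2g+1}\ \sum_{A\in\mathcal{M}_\alpha}\mu(A)\left(\frac{A}{P}\right)^2\sum_{B\in\mathcal{M}_\beta}\left(\frac{B}{P}\right).
\]

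The only point needing care is the factor $\left(\frac{A}{P}\right)^2$. It equals $1$ when $(A,P)=1$ and $0$ otherwise; this holds for general monic $P$ by multiplicativity of the Jacobi symbol in the modulus. Hence the sum over $A$ is exactly $\sigma(P;\alpha)$. Renaming $B$ as $D$ and dividing by $\#\mathcal{H}_{2g+1}$ gives the stated formula.

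There is no real obstacle here. The step to double-check is the normalization $\#\mathcal{H}_{2g+1}=(q-1)q^{2g}$, which holds for $2g+1\ge 2$, i.e. for $g\ge1$. Quadratic reciprocity is not needed, since we only use the symbol $\left(\frac{D}{P}\right)$ as a character in $D$.
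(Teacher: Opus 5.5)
Your proposal is correct and follows the same route as the argument the paper cites (Rudnick, Section 3.2): detect square-freeness via $\sum_{A^2\mid D}\mu(A)$, write $D=A^2B$, use that $\left(\frac{A}{P}\right)^2$ equals $1$ or $0$ according as $(A,P)=1$ or not to produce $\sigma(P;\alpha)$, and normalize by $\#\mathcal{H}_{2g+1}=(q-1)q^{2g}$. Your remark that this count requires $g\ge 1$ is a correct check that the paper leaves implicit.
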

\begin{proof} See \cite[Section~3.2]{rudnick2008traces}.
\end{proof}
\begin{lem}\label{lem:sum_mobius}
    For $n \geq 2$ and a prime $P\in\mathbb{F}_q[T]$ of degree $n$, we have, for all $k \geq 1$:
    \begin{equation*}
        \sigma(P^k;\alpha) = \sigma_n(\alpha) = \begin{cases}
            1, & \text{ if } \alpha \equiv 0 \bmod n, \\
            -q, & \text{ if } \alpha \equiv 1 \bmod n, \\
            0, & \text{ else}.
        \end{cases}
    \end{equation*}
\end{lem}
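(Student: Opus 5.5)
The plan is to compute $\sigma(P^k;\alpha)$ through its generating function. Write
\begin{equation*}
\sum_{\alpha\ge 0}\sigma(P^k;\alpha)u^\alpha=\sum_{\substack{A \text{ monic}\\ (A,P^k)=1}}\mu(A)u^{\deg A}.
\end{equation*}
The coprimality condition $(A,P^k)=1$ is the same as $(A,P)=1$, so the left side does not depend on $k\ge 1$. This already accounts for the claim that $\sigma(P^k;\alpha)=\sigma_n(\alpha)$ is independent of $k$.

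Next, use multiplicativity. The Dirichlet series of $\mu$ over monic polynomials is the reciprocal of the zeta function,
\begin{equation*}
\sum_{A}\mu(A)u^{\deg A}=\prod_{Q}(1-u^{\deg Q})=\frac{1}{Z(u)}=1-qu,
\end{equation*}
since $Z(u)=\sum_{f\in\mathcal{M}}u^{\deg f}=(1-qu)^{-1}$. Removing the Euler factor at $P$ gives
\begin{equation*}
\sum_{\alpha}\sigma(P;\alpha)u^\alpha=\frac{1-qu}{1-u^{n}}=(1-qu)\sum_{m\ge 0}u^{mn}.
\end{equation*}
All of these are identities of formal power series, because each coefficient involves only finitely many $A$.

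Now read off the coefficients. The coefficient of $u^\alpha$ equals $[n\mid\alpha]-q\,[n\mid \alpha-1]$. Because $n\ge 2$, the conditions $\alpha\equiv 0$ and $\alpha\equiv 1 \pmod n$ cannot both hold, so the coefficient is $1$, $-q$ or $0$ exactly as stated. This is where the hypothesis $n\ge 2$ is used: for $n=1$ the two cases would overlap and give $1-q$.

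No step is a real obstacle. The only point needing care is to justify the Euler product identity with the $P$-factor removed, i.e.\ $\prod_{Q\ne P}(1-u^{\deg Q})=(1-qu)/(1-u^n)$, and to note that $\alpha=0$ gives $\sigma=1$ (from $A=1$), consistent with the formula.
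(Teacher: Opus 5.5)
Your proof is correct and matches the argument behind the paper's citation of Rudnick's Lemma 4: you reduce to $k=1$ via $(A,P^k)=1\iff(A,P)=1$, write the generating function as $\prod_{Q\neq P}(1-u^{\deg Q})=(1-qu)/(1-u^{n})$, and read off the coefficients, with $n\ge 2$ keeping the classes $0$ and $1 \bmod n$ disjoint. The paper gives no proof of its own beyond that reference, so there is nothing further to compare.
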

\begin{proof} See \cite[Lemma 4]{rudnick2008traces}. \end{proof}
We also recall:
\begin{lem}[Quadratic reciprocity]\label{l.qr}
    If $D$ and $P$ are monic polynomials in $\F_q[T]$, then
    \begin{equation*}
        \left(\frac{D}{P}\right) = (-1)^{\frac{q-1}{2} \deg P \deg D} \left(\frac{P}{D}\right).
    \end{equation*}
\end{lem}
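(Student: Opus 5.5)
The plan is to reduce to the case where $D$ and $P$ are distinct monic irreducibles and then express each Legendre symbol as a quadratic character of $\F_q^\times$ evaluated at a resultant. The symmetry of the resultant up to sign then gives the formula.

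\textbf{Reduction.} Here $\left(\frac{D}{P}\right)$ is the Jacobi symbol: it is completely multiplicative in $D$ and, by definition, multiplicative in $P$ over its prime factorization. If $\gcd(D,P)\neq 1$, both sides vanish. Otherwise, write $D=\prod_i Q_i$ and $P=\prod_j P_j$ as products of monic irreducibles, with repetition. Both sides of the claimed identity are multiplicative in each variable. The sign factor also factors correctly, since
\begin{equation*}
(-1)^{\frac{q-1}{2}\deg P\deg D}=\prod_{i,j}(-1)^{\frac{q-1}{2}\deg P_j\deg Q_i}.
\end{equation*}
So it suffices to treat $D=Q$ and $P$ distinct monic irreducibles of degrees $b$ and $a$.

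\textbf{Irreducible case.} Let $\chi_2$ denote the quadratic character of $\F_q^\times$. The field $\F_q[T]/(P)$ has $q^a$ elements, so Euler's criterion gives
\begin{equation*}
\left(\frac{Q}{P}\right)\equiv Q^{(q^a-1)/2}=\Bigl(Q^{1+q+\cdots+q^{a-1}}\Bigr)^{(q-1)/2}\pmod P.
\end{equation*}
Reducing $Q^{1+q+\cdots+q^{a-1}}$ modulo $P$ gives the norm from $\F_{q^a}$ to $\F_q$ of $Q(\alpha)$, where $\alpha$ is a root of $P$. This norm equals $\prod_{P(\alpha)=0}Q(\alpha)=\operatorname{Res}(P,Q)\in\F_q^\times$. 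Hence
\begin{equation*}
\left(\frac{Q}{P}\right)=\chi_2(\operatorname{Res}(P,Q)).
\end{equation*}
For monic polynomials, $\operatorname{Res}(P,Q)=(-1)^{ab}\operatorname{Res}(Q,P)$, since both equal a product of $\alpha-\beta$ over pairs of roots, up to orientation. Finally $\chi_2(-1)=(-1)^{(q-1)/2}$, which yields $\left(\frac{Q}{P}\right)=(-1)^{\frac{q-1}{2}ab}\left(\frac{P}{Q}\right)$.

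\textbf{Main obstacle.} The only step needing care is identifying the reduction of $Q^{(q^a-1)/(q-1)}$ modulo $P$ with the resultant. I would justify it by working in a splitting field, where the roots of $P$ are $\alpha,\alpha^q,\dots,\alpha^{q^{a-1}}$. Then $Q(\alpha)^{q^i}=Q(\alpha^{q^i})$, and the product over $i$ of these values is Frobenius-invariant, so it lies in $\F_q$. Everything else is bookkeeping of signs and multiplicativity.
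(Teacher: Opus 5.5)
Your proof is correct and is essentially the argument of \cite[Theorem 3.3]{Rosen}, which is all the paper gives for this lemma: Euler's criterion, writing $Q^{(q^a-1)/2}$ as the $\frac{q-1}{2}$-th power of $\prod_i Q(\alpha^{q^i})=\prod_{i,j}(\alpha^{q^i}-\beta^{q^j})$, and counting the $ab$ sign changes when $P$ and $Q$ swap roles. Your multiplicativity reduction is a useful addition, since the cited theorem is stated only for monic irreducibles, while the lemma is stated (and later applied with $D\in\mathcal{M}_\beta$) for arbitrary monic polynomials.
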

\begin{proof} See \cite[Theorem 3.3]{Rosen}.
\end{proof}

Let $P \in \mathbb{F}_q[T]$ be a monic irreducible polynomial of degree $n \in \mathbb{Z}_{\geq 0}$. 
We define the character sum:
\begin{equation*}
    S(\beta;P) := \sum_{
    D\in \mathcal{M}_\beta} \chi_D(P).
\end{equation*}
Applying Lemma~\ref{l.qr}, we can express $S(\beta;P)$ using coefficients $A_P(\beta) =\sum_{D\in \mathcal{M}_\beta} \chi_P(D)$ of the $L$-function $\mathcal{L}(u, \chi_P) = \sum_{\beta} A_P(\beta)u^\beta$:
\begin{equation*}
    S(\beta;P) = (-1)^{\frac{q-1}{2}\beta n}A_P(\beta).
\end{equation*}

\begin{lem}\label{lem:double_char_sum}
    Let $n = \deg P$. If $n \leq \beta$, then we have
    \begin{equation*}
        S(\beta;P) = 0.
    \end{equation*}
    If $n\in 2\mathbb{Z}_{\geq 1}$, with $\beta = n-1$, then we have
    \begin{equation*}
        S(\beta; P) = - q^{\frac{n-2}{2}}.
    \end{equation*}
\end{lem}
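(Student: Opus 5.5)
Since the statement is phrased through $S(\beta;P) = (-1)^{\frac{q-1}{2}\beta n}A_P(\beta)$, I will work entirely with the coefficients $A_P(\beta)=\sum_{D\in\mathcal{M}_\beta}\chi_P(D)$ of $\mathcal{L}(u,\chi_P)$. Here $\chi_P(\cdot)=\left(\frac{\cdot}{P}\right)$ is the Legendre symbol modulo the prime $P$, which is a nontrivial character modulo $P$.

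For the first claim ($n\le\beta$), I will use orthogonality. As $D$ runs over $\mathcal{M}_\beta$ with $\beta\ge n$, write $D=QP+R$ with $Q$ monic of degree $\beta-n$ and $R$ ranging over all polynomials of degree $<n$. Each residue class modulo $P$ is then hit exactly $q^{\beta-n}$ times. Hence $A_P(\beta)=q^{\beta-n}\sum_{R \bmod P}\chi_P(R)=0$, since $\chi_P$ is a nontrivial character on $(\mathbb{F}_q[T]/P)^\times$ and vanishes at $0$. Therefore $S(\beta;P)=0$.

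For the second claim ($n$ even, $\beta=n-1$), I will use the functional equation, or equivalently the polynomial structure of $\mathcal{L}(u,\chi_P)$. By the first part, $\mathcal{L}(u,\chi_P)$ is a polynomial of degree at most $n-1$. Since $\deg P=n$ is even, $\chi_P$ is an "even" character: $\chi_P(c)=\left(\frac{c}{P}\right)=c^{(q^n-1)/2}$ for $c\in\mathbb{F}_q^\times$, and $(q^n-1)/(q-1)$ is even when $n$ is even, so $\chi_P(c)=1$ for all constants $c$. Consequently $\mathcal{L}(u,\chi_P)$ has a trivial zero at $u=1$: $\mathcal{L}(u,\chi_P)=(1-u)\mathcal{L}^*(u,\chi_P)$, where $\mathcal{L}^*$ has degree $n-2$ (it is $\det(1-\sqrt{q}u\Theta_P)$ for the genus $g'=(n-2)/2$ curve $y^2=P$). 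The leading coefficient of $\mathcal{L}^*$ is $q^{g'}\det\Theta_P=q^{(n-2)/2}$, since $\Theta_P$ is symplectic. Comparing top coefficients gives $A_P(n-1)=-q^{(n-2)/2}$.

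Finally, the sign is $(-1)^{\frac{q-1}{2}(n-1)n}=1$ because $n$ is even, so $S(n-1;P)=-q^{(n-2)/2}$. The only delicate point is justifying the factorization and the leading coefficient. An elementary alternative avoids the geometric input: use $\sum_{\beta<n}A_P(\beta)=\sum_{\deg R<n, R\text{ monic}}\chi_P(R)$ together with the functional equation $A_P(\beta)=q^{\beta-(n-2)/2}\cdot(\ldots)$. The cleanest route, however, is to cite the standard result (as in \cite{rudnick2008traces}) that $\mathcal{L}(u,\chi_P)=(1-u)P^*(u)$ with $P^*$ satisfying the functional equation $P^*(u)=(qu^2)^{g'}P^*(1/(qu))$, whose constant term is $1$, forcing leading coefficient $q^{g'}$.
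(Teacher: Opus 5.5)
Your proof is correct and follows essentially the same route as the argument in Rudnick's Lemma~6 and Proposition~7, to which the paper defers. For $\beta \geq n$ you use orthogonality of the quadratic symbol modulo $P$. For even $n$ you use the trivial zero at $u=1$ together with the functional equation (equivalently $\det \Theta_P = 1$) of the genus-$\frac{n-2}{2}$ curve $y^2=P(x)$ to read off the coefficient $-q^{(n-2)/2}$ of $u^{n-1}$.

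The only slip is notational. The paper's $\chi_P$ is $\left(\frac{P}{\cdot}\right)$, not $\left(\frac{\cdot}{P}\right)$. The two differ by the sign $(-1)^{\frac{q-1}{2}n\beta}$, which does not affect the vanishing statement and equals $1$ when $n$ is even.
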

 \begin{proof} See the proof of \cite[Lemma 6]{rudnick2008traces}  and~\cite[Proposition 7]{rudnick2008traces}.
 \end{proof}
 We conclude the proof of Theorem~\ref{thm:quadratic_murmuration} by applying Lemma~\ref{lem:average_quad_characters}, Lemma~\ref{lem:sum_mobius}, and Lemma~\ref{lem:double_char_sum} for $n = 2g$ to equation~\eqref{eq.md} as follows:
\begin{align*}
    M_{q,g}(P) &= \frac{1}{(q-1)q^{2g}} \sum_{2\alpha + \beta =2g+1} \sigma(P;\alpha) \sum_{D\in \mathcal{M}_\beta} \left(\frac{D}{P}\right) \sqrt{|P|}  \\
    &= \frac{1}{(q-1)q^{g}} \sum_{2\alpha + \beta =2g+1} \sigma_n(\alpha)S(\beta;P) \\
    &=  \frac{1}{(q-1)q^{g}} \left(S(2g+1;P) - qS(2g-1;P)\right) \\
    &= \frac{1}{q-1}.
\end{align*}

\section{Proof of Theorem~\ref{thm:cohomological_formula}}
The proof proceeds by first expressing $\mathrm{Tr}(\Theta^n)$ as a sum of symplectic characters.
Using the Grothendieck trace formula, averages of these characters may be identified as traces of Frobenius acting on the \'{e}tale cohomology of a moduli space.
Subsequently, we estimate the sum via the contribution of the stable homology.
We will identify that the trivial character contributes the main random matrix theory term, as in \cite{katzsarnak1999}. 
Arithmetic corrections at small $n$ will be captured by sums over hooks of length $n$, which we bound for sufficiently large $q$ using the ideas in \cite{bergström2024hyperellipticcurvesscanningmap}.
Finally, the correction term will appear in a sum over hooks of the form $(a, 1^a)$ for $a \geq 1$, which we evaluate using \cite{bergström2024hyperellipticcurvesscanningmap}.

\subsection{Decomposition into irreducible symplectic characters}
Let $\mathbf{k}$ be a field of characteristic zero, let $V$ be a vector space over $\mathbf{k}$, and let $\Theta \in \mathrm{Sp}(V)$ be a symplectic linear map. 
Denote by $s_{\langle\lambda \rangle}$ the character of the irreducible representation $V_\lambda$ of $\mathrm{Sp}(V)$ of highest weight $\lambda$.  
\begin{lem}\label{lem:splitting_rule}
    Suppose that $n \in \mathbb{Z}_{\geq 1}$ and $\Theta \in \mathrm{Sp}_{2g}(\mathbf{k})$. We have
    \begin{equation*}
    \mathrm{Tr}(\Theta^n)
    =
    \begin{dcases}
    -\mathbbm{1}_{2\mid n} s_{\langle \emptyset \rangle}(\Theta)
    +
    \sum_{\ell=0}^{n-1}
    (-1)^\ell
    s_{\langle (n-\ell,1^\ell)\rangle}(\Theta),
    & \text{if } n \leq g, \\
    -\mathbbm{1}_{2\mid n} s_{\langle \emptyset \rangle}(\Theta)
    +
    \sum_{\ell=0}^{g-1}
    (-1)^\ell
    s_{\langle (n-\ell,1^\ell)\rangle}(\Theta)
    -
    \sum_{a=1}^{n-g-1}
    (-1)^{n+a}
    s_{\langle (a,1^{2g-n+a})\rangle}(\Theta),
    & \text{if } g < n \leq 2g, \\
    \sum_{\ell=0}^{g-1}
    (-1)^\ell
    s_{\langle (n-\ell,1^\ell)\rangle}(\Theta)
    -
    \sum_{a=0}^{g-1}
    (-1)^a
    s_{\langle (n-2g+a,1^a)\rangle}(\Theta),
    & \text{if } n > 2g.
    \end{dcases}
    \end{equation*}
\end{lem}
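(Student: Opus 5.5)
We begin in the ring of symmetric functions, where the power sum admits the classical hook expansion
\begin{equation*}
p_n = \sum_{\ell=0}^{n-1} (-1)^\ell s_{(n-\ell,1^\ell)}.
\end{equation*}
We evaluate this identity at the eigenvalues $x_1^{\pm1},\dots,x_g^{\pm1}$ of $\Theta$, which gives $\mathrm{Tr}(\Theta^n)=p_n(x^{\pm1})$. Each $s_\lambda(x^{\pm1})$ is the character of the restriction $\mathrm{GL}_{2g}\to\mathrm{Sp}_{2g}$ of the irreducible of highest weight $\lambda$. We rewrite it in terms of symplectic characters using the Littlewood branching rule
\begin{equation*}
s_\lambda=\sum_{\mu}\Big(\sum_{\beta\ \text{even columns}} c^\lambda_{\beta\mu}\Big)\,s_{\langle\mu\rangle},
\end{equation*}
valid in the stable range $\ell(\lambda)\le g$. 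Outside that range we use Koike--Terada modification rules to straighten the resulting $s_{\langle\mu\rangle}$ with $\ell(\mu)>g$. We then collect terms.

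For a hook $\lambda=(n-\ell,1^\ell)$, the partitions $\beta$ with all columns of even length contained in $\lambda$ are only $\emptyset$ and $(1,1)$, the latter allowed when $\ell\ge1$. The rule therefore gives
\begin{equation*}
s_{(n-\ell,1^\ell)}=s_{\langle (n-\ell,1^\ell)\rangle}+s_{\langle(n-\ell,1^{\ell-2})\rangle}.
\end{equation*}
When $\ell=1$, the second term should be read as $s_{\langle(n-1)\rangle}$, from $c^{(n-1,1)}_{(1,1),(n-2)}=1$. Summing with signs $(-1)^\ell$, consecutive terms telescope: the correction from index $\ell$ cancels against part of index $\ell-2$. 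I would organise this more cleanly by writing $p_n$ directly in universal symplectic characters. Using $\sum_\ell(-1)^\ell s_{\langle(n-\ell,1^\ell)\rangle}$ and subtracting the contribution of $s_{(1,1)}$-type terms should leave exactly $-\mathbbm{1}_{2\mid n}s_{\langle\emptyset\rangle}$. The parity comes from the $\ell=n-1$ column $(1^n)$, which contains $(1^n)$ itself as an even-column shape exactly when $n$ is even.

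This gives the universal identity
\begin{equation*}
p_n=-\mathbbm{1}_{2\mid n}s_{\langle\emptyset\rangle}+\sum_{\ell=0}^{n-1}(-1)^\ell s_{\langle(n-\ell,1^\ell)\rangle}.
\end{equation*}
As a sanity check, $n=1,2$ match $\mathrm{Tr}\,\Theta=s_{\langle1\rangle}$ and $\mathrm{Tr}\,\Theta^2=s_{\langle2\rangle}-s_{\langle11\rangle}-1$. For $n\le g$ all hooks have length $\le n\le g$, so no modification is needed and the first case follows.

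For $n>g$, the hooks with $\ell\ge g$ have length $\ell+1>g$ and must be modified. The Koike--Terada rule for $\mathrm{Sp}_{2g}$ says the following. For a partition $\mu$, remove a border strip of length $h=2\ell(\mu)-2g-2$ starting from the first column. If $c$ is the number of columns of the strip, then $s_{\langle\mu\rangle}=(-1)^{c}s_{\langle\mu'\rangle}$ on $\mathrm{Sp}_{2g}$, iterated if necessary, and the character is $0$ if the removal is not a valid shape.

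For the hook $\mu=(a',1^{\ell})$ with $\ell(\mu)=\ell+1$, we have $h=2\ell-2g$. Along the leg the strip removes $h$ boxes when $2\ell-2g\le \ell-1$, i.e.\ $\ell\le 2g-1$. The result is $(n-\ell,1^{2g-\ell})$ with sign $-1$, and is zero if $h=0$, i.e.\ $\ell=g$. When the strip must enter the arm, i.e.\ $\ell\ge 2g$, it goes around the corner and yields $(n-2g-(\ell-2g)-1+\dots)$. After reindexing I expect this to give hooks $(n-2g+a,1^a)$ with the sign $-(-1)^a$.

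Reindexing $a=n-\ell$ in the range $g<\ell\le\min(n-1,2g-1)$ produces $-\sum_{a}(-1)^{n+a}s_{\langle(a,1^{2g-n+a})\rangle}$ over $1\le a\le n-g-1$, which is the second case. For $n>2g$ we additionally check that the $s_{\langle\emptyset\rangle}$ term is absorbed: the hook $(1^n)$ with $n$ even modifies to $\pm s_{\langle\emptyset\rangle}$ and cancels it, and the remaining hooks give the third sum.

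The main obstacle is this last bookkeeping: getting the Koike--Terada signs and the boundary cases $\ell=g$, $\ell=2g$, and $\ell=2g-1$ exactly right. I would verify these against small cases ($g=1$ with $n=2,3,4$) using direct eigenvalue computation.
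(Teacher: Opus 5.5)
Your route, restricting $p_n=\sum_\ell(-1)^\ell s_{(n-\ell,1^\ell)}$ to $\mathrm{Sp}_{2g}$ via universal (Koike--Terada) branching and then applying the modification rule, differs from the paper's. The paper works on the torus with the bialternant $\mathsf D$ and the relation $Hv_m=v_{m+n}+v_{m-n}$, so each column replacement is directly a signed hook character, zero, or $-1$, uniformly in $n$ and $g$. Your target universal identity is correct, and your derivation of the case $g<n\le 2g$ from it is fine. However, both computational steps contain errors.

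\emph{The branching of a hook is wrong.} The even-column shapes inside $(a,1^\ell)$, $a=n-\ell$, are all $(1^{2k})$ with $2k\le\ell+1$, not only $\emptyset$ and $(1,1)$. Moreover $s_{(a,1^\ell)/(1^{2k})}=h_{a-1}e_{\ell+1-2k}$, which by Pieri contributes two hooks, $s_{\langle(a,1^{\ell-2k})\rangle}+s_{\langle(a-1,1^{\ell+1-2k})\rangle}$. In the degenerate cases only the second appears if $2k=\ell+1$, and only $s_{\langle(1^{\ell+1-2k})\rangle}$ if $a=1$. For example $s_{(2,1,1)}=s_{\langle(2,1,1)\rangle}+s_{\langle(2)\rangle}+s_{\langle(1,1)\rangle}$ (in $\mathrm{Sp}_6$ the dimensions are $105=70+21+14$), and your formula misses $s_{\langle(1,1)\rangle}$. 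Also your $\ell=1$ correction should be $s_{\langle(n-2)\rangle}$, not $s_{\langle(n-1)\rangle}$.

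With your formula nothing can telescope against index $\ell-2$. All your corrections have size $n-2$ and pairwise distinct arms, while the index-$(\ell-2)$ main terms have size $n$, so the key identity ends up merely asserted. The correct cancellation is between adjacent indices at each fixed size $n-2k$. The hook $(b,1^{n-2k-b})$ arises once from $\ell=n-b$ (first Pieri term) and once from $\ell=n-b-1$ (second Pieri term), with opposite signs. The only survivor is $\beta=(1^n)\subseteq(1^n)$ for even $n$, which gives $-s_{\langle\emptyset\rangle}$.

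\emph{The modification bookkeeping for $n>2g$ is wrong.} With $h=2\ell-2g$ the cases are as follows.
\begin{itemize}
\item $\ell=g$ gives $0$.
\item Every $g<\ell\le 2g$ gives $-s_{\langle(n-\ell,1^{2g-\ell})\rangle}$. This includes $\ell=2g$, where the strip is exactly the leg; your condition $h\le\ell-1$ wrongly excludes it.
\item For $\ell>2g$ the strip contains the corner box $(1,1)$, so the remainder is a partition only if the strip is the entire hook, i.e.\ $h=n$, $\ell=g+n/2$. All other $\ell>2g$ give $0$; they do not produce hooks $(n-2g+a,1^a)$.
\end{itemize}
The third sum comes entirely from the leg range via $a=2g-\ell\in[0,g-1]$, with sign $(-1)^\ell\cdot(-1)=-(-1)^a$. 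If the arm-entering terms also produced these hooks, as you expect, you would count them twice.

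The $s_{\langle\emptyset\rangle}$ cancellation comes from $(n/2-g,1^{n/2+g})$, whose strip has $n/2-g$ columns, giving total sign $(-1)^{g+n/2}(-1)^{n/2-g}=+1$. This hook equals $(1^n)$ only when $n=2g+2$. For instance, with $g=1$, $n=6$: $(5,1)\mapsto 0$, $(4,1,1)\mapsto -s_{\langle(4)\rangle}$, $(3,1^3)\mapsto 0$, $(2,1^4)\mapsto s_{\langle\emptyset\rangle}$, $(1^6)\mapsto 0$. Hence $p_6\mapsto s_{\langle(6)\rangle}-s_{\langle(4)\rangle}=x^6+x^{-6}$.

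With these two repairs your approach does prove the lemma. The paper's determinantal computation sidesteps both the branching combinatorics and the modification rules.
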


\begin{proof}
    It suffices to establish the equation after base change to $\overline{\mathbf{k}}$. 
    Furthermore, we may assume without loss of generality that
    \begin{equation}\label{eq.theta}
        \Theta = \mathrm{diag} (x_1,\ldots, x_g, x_1^{-1},\ldots,x_g^{-1}),
    \end{equation}
    with $x_i \neq x_j^{\pm 1}$ for all $i \neq j$ and $x_i \neq \pm1$ for all $i$. 
    Indeed, since we are establishing an identity of regular class functions on $\mathrm{Sp}_{2g}$, it is enough to check on the Zariski-dense set of regular semisimple elements in the split maximal torus. 
    With $\Theta$ as in equation~\eqref{eq.theta}, we have
    \begin{equation}\label{eq.Trp}
        \mathrm{Tr}(\Theta^n) = p_n(\Theta) = \sum_{i=1}^g x_i^n + x_i^{-n},
    \end{equation}
    where $p_n$ denotes the power sum polynomial of degree $n$ and we interpret $p_n(\Theta)$ as evaluating $p_n$ on the multiset of eigenvalues of $\Theta$. 
    As per \cite[2.6.8]{bergström2024hyperellipticcurvesscanningmap}, the Weyl character formula gives us an explicit bialternate-type formula
    \begin{equation}\label{eq.bialternate}
        s_{\langle \lambda \rangle}(\Theta) = \frac{\det\left(x_i^{\lambda_{j}+g-j+1}-x_i^{-(\lambda_j+g-j+1)}\right)_{1 \leq i,j \leq g}}{\det\left(x_i^{g-j+1}-x_i^{-(g-j+1)}\right)_{1 \leq i,j\leq g}}.
    \end{equation}
    Note that the denominator in equation~\eqref{eq.bialternate} is nonzero since we have passed to regular semisimple elements in the split maximal torus. 
    For $m \in \mathbb{Z}$ define the column vectors
    \begin{equation*}
        v_m = \begin{pmatrix}
            x_1^m - x_1^{-m} \\
            \vdots \\
            x_g^m - x_g^{-m}
        \end{pmatrix},
    \end{equation*}
    and let 
    \[\mathsf D = \det(v_g \mid v_{g-1} \mid \cdots \mid v_1).\] 
    Subsequently, equation~\eqref{eq.bialternate} gives
    \begin{equation}\label{eq.sthbia}
        s_{\langle \lambda \rangle}(\Theta) = \mathsf D^{-1}\det(v_{\lambda_1+g} \mid v_{\lambda_2 +g -1} \mid \cdots \mid v_{\lambda_g +1}).
    \end{equation}
    Applying equation~\eqref{eq.sthbia} in the case of a hook $\lambda = (a,1^b)$ with $a \geq 1$ and $0 \leq b \leq g-1$ gives
    \begin{align*}
        s_{\langle (a,1^b) \rangle}(\Theta) &= \mathsf D^{-1}\det\left(v_{a+g} \mid v_{g} \mid v_{g-1} \mid \cdots \mid v_{g-b+1} \mid v_{g-b-1} \mid \cdots \mid v_1\right) \\
        &= (-1)^b \mathsf D^{-1} \det\left(v_g  \mid \cdots \mid v_{g-b+1} \mid v_{g+a} \mid v_{g-b-1} \mid \cdots \mid v_1\right).
    \end{align*}
    Now we compute $p_n(\Theta)\mathsf D$ and compare. Let $H = \mathrm{diag}(x_i^n + x_i^{-n})_{1 \leq i \leq g}$ so $p_n(\Theta) =\mathrm{Tr}(H)$. Consider the equation
    \begin{equation}\label{eq.sumdet}
        \det((I+tH)v_g \mid \cdots \mid (I+tH)v_1) = \det(I+tH)\det(v_g \mid \cdots \mid v_1).
    \end{equation}
    Differentiating equation~\eqref{eq.sumdet} and evaluating at $t=0$, we deduce
    \begin{equation}\label{eq.diffsumdet0}
        \sum_{m=1}^g \det(v_g \mid \cdots \mid Hv_m \mid \cdots \mid v_1) = \mathrm{Tr}(H)\det(v_g \mid \cdots \mid v_1) = p_n(\Theta)\mathsf D.
    \end{equation}
    Noting that $Hv_m = v_{m+n} + v_{m-n}$, equation~\eqref{eq.diffsumdet0} implies that 
    \begin{equation}\label{eq.pnThe}
        p_n(\Theta) = \sum_{m=1}^g \frac{\mathsf D(m, m+n)}{\mathsf D} + \sum_{m=1}^g \frac{\mathsf D(m,m-n)}{\mathsf D}.
    \end{equation}
    where 
    \begin{equation*}
        \mathsf D(m,r) =  \det(v_g \mid \cdots \mid v_{r} \mid \cdots \mid v_1) 
    \end{equation*}
    and $v_{r}$ sits in place of $v_m$. 
    Notice that $\mathsf D(m,m+n)$ vanishes unless $m+n>g$. Setting $\ell = g-m$, the remaining terms are exactly
    \begin{equation*}
        \frac{\mathsf D(m,m+n)}{\mathsf D} = (-1)^\ell s_{\langle (n-\ell,1^\ell) \rangle}(\Theta), \ \ m+n\leq g.
    \end{equation*}
    Subsequently, we deduce that the first sum in equation~\eqref{eq.pnThe} contributes
    \begin{equation}\label{eq.L311}
        \sum_{\ell = 0}^{\min(n-1,g-1)} (-1)^\ell s_{\langle(n-\ell,1^\ell) \rangle }(\Theta).
    \end{equation}
    The second sum in equation~\eqref{eq.pnThe} is a bit more delicate. 
    If $m > n$, then $\mathsf D(m,m-n)$ vanishes by repeating a column.  
    If $m=n$ then $\mathsf D(m,m-n)$ vanishes since $v_0 = 0$. If $m < n$, then we have $v_{m-n} = - v_{n-m}$. If $n-m\leq g$, this gives a repeated column unless \(n-m=m\), 
that is, unless $n=2m$. 
Notice that if $n = 2m$ we get
    \begin{equation*}
        \frac{\mathsf D(m,m-n)}{\mathsf D} = \frac{\mathsf D(m,-m)}{\mathsf D} = -s_{\langle \emptyset \rangle}(\Theta) = -1, \ \ n=2m,
    \end{equation*}
    which yields a contribution of
    \begin{equation}\label{eq.L312}
        -\mathbbm{1}_{2 \mid n, n\leq 2g} s_{\langle\emptyset \rangle }(\Theta), 
    \end{equation}
    corresponding to the trivial representation. 
    If $n-m > g$ then set $a = n-g-m$ and $b =g-m$. 
    Since $a \geq 1$ and $0 \leq b \leq g-1$, we find
    \begin{equation*}
        \frac{\mathsf D(m,m-n)}{\mathsf D} = (-1)^{b+1}s_{\langle (a,1^b) \rangle}(\Theta), \ \ n-m>g. 
    \end{equation*}
    Thus we get a contribution 
    \begin{equation}\label{eq.L313}
        \sum_{a=\max(1,n-2g)}^{n-g-1} (-1)^{a-n+1} s_{\langle(a, 1^{2g-n+a})\rangle}(\Theta).
    \end{equation}
The Lemma follows by combining equations~\eqref{eq.Trp},~\eqref{eq.pnThe},~\eqref{eq.L311},~\eqref{eq.L312}, and~\eqref{eq.L313}.
\end{proof}

\subsection{Homological stability}
We refer the reader to \cite{bergström2024hyperellipticcurvesscanningmap} for more details regarding the homological concepts applied in this section. 
Let $H_g^{n,m}$ be the moduli space of hyperelliptic surfaces of genus $g$ with $n$ Weierstrass boundary components and
$m$ conjugate boundary pair components, defined in \cite[4.1.5]{bergström2024hyperellipticcurvesscanningmap}. 
$H_g^{n,m}$ is a topological space with algebro-geometric counterpart $\mathcal{H}_{g}^{n,m}$, that is, the moduli stack of smooth hyperelliptic curves of genus $g$, equipped with $n$ distinct ordered marked Weierstrass points, as well as $m$ distinct ordered non-Weierstrass points, none of which are conjugate under the hyperelliptic involution, and also equipped with a nonzero tangent vector at each marking. 
In particular, the analytification $\mathcal{H}_g^{n,m}(\mathbb{C})$ has the homotopy type of $H_g^{n,m}$.

We fix a choice of half Tate twist.
Let $\mathbb{V}$ be the local system on $\mathcal{H}_{g}^{n,m}$ given by $R^1p_! \overline{\mathbb{Q}}_\ell(1/2)$  where $p : \mathcal{C} \to \mathcal{H}_{g}^{n,m}$ is the universal $(n+2m)$-punctured curve. 
We will exploit a sequence of general comparison theorems for $\mathbb{V}$. 
From the work of Abramovich--Corti--Vistoli \cite{AbramovichCortiVistoli2003}, we have a smooth modular compactification of $\mathcal{H}_g^{n,m}$ over $\operatorname{Spec} \mathbb{Z}\left[\frac12\right]$ such that the complement is a normal crossing divisor. 
It follows that, for $q=p^r$ an odd prime power and a prime $\ell \neq p$, we have an isomorphism between étale cohomology in characteristic zero and positive characteristic: 
\begin{equation}\label{eq.isom1}
    H_k^{\mathrm{\acute{e}t}}(\mathcal{H}_g^{n,m} \otimes \overline{\mathbb{Q}}, \mathbb{Q}_\ell) \cong H_k^{\mathrm{\acute{e}t}}(\mathcal{H}_g^{n,m} \otimes \overline{\mathbb{F}}_p, \mathbb{Q}_\ell), 
\end{equation}
where we define $H_k^{\mathrm{et}}(X,\mathcal{F}) = H_{c}^{2 \dim X-k}(X,\mathcal{F}(\dim X))$ as per \cite{wang2024noteszetaratiostabilization}. 

Using the Artin comparison theorem and Poincaré duality, we pass to the singular homology of the topological space $H_g^{n,m}$:
\begin{equation}\label{eq.isom2}
    H^{\mathrm{sing}}_k(H_g^{n,m}, \mathbb{Q}_\ell) \cong H_k^{\mathrm{\acute{e}t}}(\mathcal{H}_g^{n,m} \otimes \overline{\mathbb{Q}}, \mathbb{Q}_\ell).
\end{equation}
Similarly, we get comparisons for coefficients in the local system $S^\lambda(\mathbb{V})$ where $S^\lambda$ is the Schur functor. 
Inside of $S^\lambda(\mathbb{V})$ we will consider the symplectic irreducible $\mathbb{V}_\lambda  \subseteq S^\lambda(\mathbb{V})$, which we assume is twisted to be pure of weight zero.
Specialising to the case $(n,m) = (1,0)$, a stalk of $\mathbb{V}$ at $D \in \mathcal{H}_{g}^{1,0}$ is given by 
\begin{equation*}
    \mathbb{V}_D = H_{\mathrm{\acute{e}t}}^1(C_D, \overline{\mathbb{Q}}_\ell)(1/2).
\end{equation*}
Since $\mathbb{V}_\lambda$ is constructed functorially inside of $S^\lambda(\mathbb{V})$, the comparison goes through for coefficients in $\mathbb{V}_\lambda$. 
Furthermore, $H_{g}^{1,0} \simeq \mathrm{Conf}_{2g+1}(D)$ is an Eilenberg–MacLane space $K(\beta_{2g+1}, 1)$ where $\mathrm{Conf}_{2g+1}(D)$ is the unordered configuration space and $\beta_{2g+1}$ is the Artin braid group on $2g+1$ strands. 
This gives us a further comparison to group homology:
\begin{equation}\label{eq.isom3}
    H_k(\beta_{2g+1}, \mathbb{Q}_\ell) \cong H_k^{\mathrm{sing}}(H_g^{1,0}, \mathbb{Q}_\ell). 
\end{equation}
Likewise, we can take coefficients in $\mathbb{V}_\lambda$, which corresponds to the integral reduced Burau representation $V_\lambda$ of $\beta_{2g+1}$. 
Combining equations~\eqref{eq.isom1},~\eqref{eq.isom2} and~\eqref{eq.isom3}, we extract the following sequence of comparisons.
\begin{thm}\label{thm:comparisons}
    For $q = p^r$ an odd prime power and a prime $\ell \neq p$ (used to define $\mathbb{V}_\lambda$), 
    \begin{equation*}
        H_k(\beta_{2g+1}, V_\lambda) \cong H_k^{\mathrm{sing}}(H_g^{1,0}, \mathbb{V}_\lambda) \cong H_k^{\mathrm{\acute{e}t}}(\mathcal{H}_{g}^{1,0} \otimes \overline{\mathbb{Q}}, \mathbb{V}_\lambda) \cong H_k^{\mathrm{\acute{e}t}}(\mathcal{H}_{g}^{1,0} \otimes \overline{\mathbb{F}}_q, \mathbb{V}_\lambda).
    \end{equation*}
\end{thm}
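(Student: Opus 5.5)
The plan is to assemble the chain of isomorphisms from right to left, treating each link as an instance of a standard comparison theorem, and then check that each one is compatible with the twisted coefficients $\mathbb{V}_\lambda$ rather than only with constant coefficients.

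\textbf{Group homology and singular homology.} The first isomorphism $H_k(\beta_{2g+1},V_\lambda)\cong H_k^{\mathrm{sing}}(H_g^{1,0},\mathbb{V}_\lambda)$ is formal. Since $H_g^{1,0}\simeq \mathrm{Conf}_{2g+1}(D)$ is a $K(\beta_{2g+1},1)$, local systems on $H_g^{1,0}$ are the same as $\beta_{2g+1}$-modules, and the homology of a $K(\pi,1)$ with local coefficients computes group homology. It remains to identify the monodromy of $\mathbb{V}$ at the stalk $H^1(C_D)$ with the reduced Burau representation at $t=-1$, that is, the symplectic representation on $H^1$ of the hyperelliptic curve branched over the $2g+1$ points together with $\infty$. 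This is the classical identification via the Birman--Hilden theorem. Because $\mathbb{V}_\lambda$ is cut out functorially from $S^\lambda(\mathbb{V})$ by a symplectic Schur functor, the same functor applied to $V$ yields $V_\lambda$, and the identification persists.

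\textbf{Singular and étale homology over $\overline{\mathbb{Q}}$.} For the second isomorphism I would use the Artin comparison theorem for constructible $\ell$-adic sheaves on the finite-type Deligne--Mumford stack $\mathcal{H}_g^{1,0}$ over $\mathbb{C}$. Compactly supported cohomology with coefficients in $\mathbb{V}_\lambda$ agrees with the compactly supported singular cohomology of the analytification. Invariance of étale cohomology under the extension $\overline{\mathbb{Q}}\subset\mathbb{C}$ of algebraically closed fields lets us replace $\mathbb{C}$ by $\overline{\mathbb{Q}}$. Finally, Poincaré duality on the smooth orbifold $H_g^{1,0}$ converts $H_c^{2d-k}(-,\mathbb{V}_\lambda(d))$ into $H_k$, using the convention $H_k^{\mathrm{\acute et}}:=H_c^{2\dim-k}(\cdot(\dim))$. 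Over $\mathbb{Q}_\ell$, finite stabilizers cause no trouble.

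\textbf{Specialization from $\overline{\mathbb{Q}}$ to $\overline{\mathbb{F}}_q$.} This third step is the one requiring real input. I would use the Abramovich--Corti--Vistoli compactification $\overline{\mathcal{H}}$ over $\mathbb{Z}[1/2]$: it is smooth and proper, with boundary a relative normal crossings divisor. The local system $\mathbb{V}$, and hence $\mathbb{V}_\lambda$, is lisse on $\mathcal{H}_g^{1,0}$ over $\mathbb{Z}[1/2\ell]$. Its ramification along the boundary is tame, because the monodromy is quasi-unipotent, coming from degenerations of curves. The key step is then the tame specialization theorem (SGA 4½, Deligne's "Th. finitude", or SGA 7, XIII): for a smooth proper scheme over a DVR with a relative NCD boundary and a lisse sheaf tamely ramified along that boundary, the complexes $Rj_*$ and $Rj_!$ commute with base change, so the specialization maps on $H_c^*$ are isomorphisms. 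Applying this over the strict henselization of $\mathbb{Z}[1/2\ell]$ at $p$, and using that $p\nmid 2\ell$ since $q$ is odd and $\ell\ne p$, gives the isomorphism between the geometric generic fiber and the geometric special fiber. Duality is compatible with specialization, so the statement passes to $H_k$.

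\textbf{Main obstacle.} The hard part is the tameness check. Working with coefficients $\mathbb{Q}_\ell$ instead of $\mathbb{Z}_\ell$ sidesteps $\ell$-torsion issues, but when $\ell\mid |\mathrm{Aut}|$ one has to be careful with the stack structure. To handle this, I would pass to a finite étale cover by a scheme, namely a cover by level structures prime to $p\ell$, and take invariants under its Galois group.
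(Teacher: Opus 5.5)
Your proposal follows the same route as the paper: the $K(\beta_{2g+1},1)$ identification of local systems with $\beta_{2g+1}$-modules, Artin comparison together with Poincar\'e duality, and passage from $\overline{\mathbb{Q}}$ to $\overline{\mathbb{F}}_q$ via the Abramovich--Corti--Vistoli compactification over $\mathbb{Z}[1/2]$ with normal crossings boundary; you are also more careful than the paper in noting that twisted coefficients require tame ramification along that boundary. One correction: quasi-unipotent monodromy does not by itself give tameness in characteristic $p$, since wild inertia can still act through a nontrivial finite $p$-group; the correct reason is that the boundary degenerations are nodal degenerations of double covers, so the local monodromy is unipotent up to finite pieces of order $2$, and these are prime to $p$ because $q$ is odd.
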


The main topological input will be the uniform homological stability for $H_g^{1,0}$.

\begin{thm}\label{thm:stable_range}
    We have a uniform stable range
    \begin{equation}\label{eq.uniformstable}
        H_k(\beta_n, V_\lambda) \cong H_{k}(\beta_{n+1},V_\lambda), \ \ k\leq An+B,
    \end{equation}
    for some $A,B\in\mathbb{R}$.
    Note that $V_\lambda$ denotes a different representation on each side of equation~\eqref{eq.uniformstable}.
\end{thm}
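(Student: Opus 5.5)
This statement is essentially an input result: it is the uniform twisted homological stability of Miller--Patzt--Petersen--Randal-Williams \cite[Proposition 1.5]{miller2025uniformtwistedhomologicalstability}, with explicit constants $A=\frac{1}{34}$, $B=-\frac{35}{34}$. I would not reprove it from scratch. Instead I would verify that our coefficient systems fit the hypotheses there and then quote the range. The plan has three steps.

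\textbf{Step 1: set up the coefficient system.} I would first make precise how the representations $V_\lambda$ of $\beta_n$ and $\beta_{n+1}$ are related. The reduced integral Burau representation at $t=-1$ of $\beta_{2g+1}$ is the symplectic lattice $H_1$ of the hyperelliptic surface, of rank $2g$. The standard inclusion $\beta_n\hookrightarrow\beta_{n+1}$, which adds a strand, is compatible with a map of these lattices. Applying the Schur functor and taking the traceless (symplectic) part gives $V_\lambda$ on each side, with the same partition $\lambda$. This produces a stabilization map
\[
H_k(\beta_n,V_\lambda)\to H_k(\beta_{n+1},V_\lambda).
\]
The \emph{uniformity} in the statement means that the constants $A,B$ do not depend on $\lambda$. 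This matters because later in the paper we sum over hooks $\lambda$ of unbounded size.

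\textbf{Step 2: obtain the stable range.} The key input is that these coefficient systems are polynomial of degree $|\lambda|$. The classical Randal-Williams--Wahl framework then gives stability in a range of the form $k\le (n-|\lambda|)/2 - c$, which degrades as $|\lambda|$ grows. To get a bound independent of $\lambda$, one needs the structural result of Miller--Patzt--Petersen--Randal-Williams. They view the whole family $\{V_\lambda\}_\lambda$ as a single module over the Brauer category (or as a $\beta$-module with a symplectic structure), and prove that the homology is generated and presented in a bounded range. The resulting slope is $1/34$. Since the relevant spaces are Eilenberg--MacLane spaces, the statement is equivalent to the one for $H_g^{1,0}$.

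\textbf{Step 3: transport to $\mathcal{H}_g^{1,0}$.} I would pass the range through Theorem~\ref{thm:comparisons}. This yields the form used later: Frobenius acts compatibly on the stable groups, which are computed in \cite{bergström2024hyperellipticcurvesscanningmap}.

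The main obstacle is the uniformity in $\lambda$ in Step 2. I would cite it rather than attempt it, and check only that the rational coefficients and odd $n$ parity conventions match.
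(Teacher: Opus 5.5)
Your proposal is correct and matches the paper, which proves this statement by directly citing \cite[Proposition 1.5]{miller2025uniformtwistedhomologicalstability} (with $A=\frac{1}{34}$, $B=-\frac{35}{34}$), exactly as you do. Your Steps 1 and 3 and your sketch of the Miller--Patzt--Petersen--Randal-Williams method are context rather than proof, since the whole content of the statement, including its uniformity in $\lambda$, is the cited result.
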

\begin{proof}
This is    \cite[Proposition 1.5]{miller2025uniformtwistedhomologicalstability}. 
\end{proof}
Concretely, it is known that one can take $A=\frac{1}{34}$ and $B=-\frac{35}{34}$ \cite[Proposition 1.5]{miller2025uniformtwistedhomologicalstability}.
Taking $n=2g+1$, we write 
$\theta(g)=\left\lfloor 2Ag+A+B\right\rfloor$ for the bound where the uniform stable range holds. 
We have to transfer results between $H_{g}^{1,0}$ and $H_g^{0,1}$, which is justified on account of the following Lemma.

\begin{lem}\label{lem:stable_agree}
    The stable homology of $H_g^{1,0}$ with coefficients in $S^{\lambda}(V)$ agrees with that of $H_g^{0,1}$.
\end{lem}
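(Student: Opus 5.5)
The plan is to compare the two spaces through their braid-group models. By the discussion preceding Theorem~\ref{thm:comparisons}, $H_g^{1,0}\simeq \mathrm{Conf}_{2g+1}(D)$ is a $K(\beta_{2g+1},1)$. Similarly, $H_g^{0,1}$ is homotopy equivalent to a configuration space of $2g+2$ branch points together with the data of the marked boundary pair. Concretely, I expect $H_g^{0,1}\simeq \mathrm{Conf}_{2g+2}(D)$, or a double cover of it recording which sheet carries the marked point; in any case it is a $K(\beta_{2g+2},1)$ or a $K(G,1)$ with $G$ of index $2$ in $\beta_{2g+2}$. The local system $S^\lambda(\mathbb{V})$ on either space is then pulled back from the (reduced) Burau representation, via the symplectic monodromy on the first homology of the double cover branched at the configuration. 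On the $(0,1)$ side the boundary pair should not change $H^1$ beyond a trivial summand, and I would check that this summand is absent or harmless.

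With these models, I would construct a stabilisation map $H_g^{1,0}\to H_g^{0,1}$ that adds a branch point near the Weierstrass boundary, turning the Weierstrass marking into a conjugate pair. On fundamental groups this is the standard inclusion $\beta_{2g+1}\hookrightarrow\beta_{2g+2}$. I would then check that the local systems are compatible: the restriction of $S^\lambda(V_{2g+2})$ to $\beta_{2g+1}$ should be $S^\lambda(V_{2g+1})$. This compatibility, namely that the reduced Burau representation restricts correctly after taking $\mathrm{Sp}$-type quotients, is precisely the kind of statement proved in \cite{bergström2024hyperellipticcurvesscanningmap}, so I would cite their treatment of the stabilisation maps $H_g^{1,0}\to H_g^{0,1}\to H_{g+1}^{1,0}$.

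Composing two such maps gives the stabilisation $\beta_{2g+1}\to\beta_{2g+3}$, whose twisted homology is an isomorphism in the stable range by Theorem~\ref{thm:stable_range}. Applied at $n$ and $n+1$, this theorem gives isomorphisms at each single step $\beta_n\to\beta_{n+1}$, with $n$ taken odd and even respectively. Hence the intermediate map into $H_*(\beta_{2g+2},S^\lambda(V))$, which is the homology of $H_g^{0,1}$, is also an isomorphism in degrees $k\le A(2g+1)+B$.

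If $H_g^{0,1}$ is instead the double-cover model, I would use a transfer argument with $\mathbb{Q}_\ell$-coefficients. Its homology splits into the $\beta_{2g+2}$-invariant part plus a sign-twisted part, and I would show that the sign-twisted part vanishes stably, either using \cite{bergström2024hyperellipticcurvesscanningmap} or by the same stability applied to the twisted coefficient.

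The main obstacle is identifying the correct group-theoretic model for $H_g^{0,1}$ and the local system on it, together with the compatibility of the Burau-type coefficients under stabilisation. Once that is in place, the agreement of stable homology follows directly from Theorem~\ref{thm:stable_range}.
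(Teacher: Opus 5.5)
Your outline is sound, but two of your assertions are false, and both sit at the step you flag as the main obstacle. The paper gives no argument of its own: it only cites 4.4.8 of Bergstr\"om--Diaconu--Petersen--Westerland. You instead derive the lemma from the interleaved stabilisations $\beta_{2g+1}\to\beta_{2g+2}\to\beta_{2g+3}$ and twisted stability in both parities of $n$. That buys a finite-genus statement: agreement in degrees $k\le A(2g+1)+B$, uniformly in $\lambda$. For the lemma as stated (fixed $\lambda$, stable homology only), ordinary non-uniform stability for the polynomial coefficient system $n\mapsto S^\lambda(V_n)$ would already suffice.

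Some of your hedging can be dropped. Once Theorem~\ref{thm:stable_range} gives the single step $\beta_{2g+1}\to\beta_{2g+2}$, the composite to $\beta_{2g+3}$ is superfluous. There is also no double cover: the hyperelliptic involution swaps the two boundary components (equivalently $(C,p)\cong(C,\iota p)$), so $H_g^{0,1}\simeq\mathrm{Conf}_{2g+2}(D)$. For even $n$ you need Theorem~\ref{thm:stable_range} with coefficients $S^\lambda(V_n)$, where $V_n$ is the reduced Burau representation of rank $n-1$ at $t=-1$ and $S^\lambda(V_n)$ is a natural summand of $V_n^{\otimes|\lambda|}$. This is the setting of \cite{miller2025uniformtwistedhomologicalstability}; a symplectic irreducible $V_\lambda$ is not defined there.

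\textbf{First error: the restriction claim.} $S^\lambda(V_{2g+2})$ does not restrict to $S^\lambda(V_{2g+1})$. Consider the double covers of the disc branched at $2g+1\subset 2g+2$ points. Their inclusion $\Sigma_{g,1}\subset\Sigma_{g,2}$ embeds $V_{2g+1}$ as a nondegenerate subspace whose orthogonal complement is the line spanned by the boundary class. So rationally $V_{2g+2}|_{\beta_{2g+1}}\cong V_{2g+1}\oplus\mathbb{Q}$, and $S^\lambda(V_{2g+2})|_{\beta_{2g+1}}\cong\bigoplus_\mu S^\mu(V_{2g+1})$, summed over $\mu$ with $\lambda/\mu$ a horizontal strip. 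You do not need an isomorphism here. The map on homology is induced by the equivariant inclusion $S^\lambda(V_{2g+1})\hookrightarrow S^\lambda(V_{2g+2})$, and that is exactly the coefficient map the stability theorem is about.

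\textbf{Second error: the trivial piece on the $(0,1)$ side.} It is neither absent nor a summand, and it must not be discarded. There $\mathbb{V}=R^1p_!$ of the twice-punctured curve has rank $2g+1$, and the boundary class spans a $\beta_{2g+2}$-invariant line. The extension $0\to\mathbb{Q}\to V_{2g+2}\to\bar V\to 0$ does not split. Indeed, an invariant complement $W$ must satisfy $\langle W,c_i\rangle\neq 0$ for each vanishing cycle $c_i$, since $c_i$ is not in the radical. Because $T_{c_i}w-w=\pm\langle w,c_i\rangle c_i$, this forces $c_i\in W$. But the $c_i$ span $V_{2g+2}$, so no proper complement exists.

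Replacing $V_{2g+2}$ by its symplectic quotient $\bar V$ makes the lemma false. Take $\lambda=(1)$. By stability, $H_*(\beta_{2g+2};V_{2g+2})$ is stably $H_*(\beta_{2g+1};V_{2g+1})$, which vanishes by Proposition~\ref{prop:vanishing_odd}. The long exact sequence then gives $H_k(\beta_{2g+2};\bar V)\cong H_{k-1}(\beta_{2g+2};\mathbb{Q})\neq 0$ for $k=1,2$. So you must keep the rank $2g+1$ local system, which is the one the stability theorem covers. With that choice, and with the coefficient map being the inclusion, your argument goes through with nothing left to check.
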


\begin{proof}
    See \cite[4.4.8]{bergström2024hyperellipticcurvesscanningmap}.
\end{proof}

Now we collect various vanishing results and bounds on the stable homology $H_k(H_{\infty}^{1,0}, \mathbb{V}_\lambda)$.

\begin{prop}\label{prop:vanishing_odd}
    If $n$ is odd and $|\lambda|$ is odd, then $H_{k}(\beta_{n}, V_\lambda)$ vanishes rationally for all $k$.
\end{prop}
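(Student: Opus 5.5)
The plan is to use a central element of the braid group acting by $-1$ on the coefficients: a central element acting by a nontrivial scalar kills group homology rationally. The natural candidate is the full twist $\Delta^2 \in \beta_n$, which generates the center of $\beta_n$ for $n\ge 3$. For $n=2g+1$ odd, $\beta_n$ is the fundamental group of $H_g^{1,0}$, and $\Delta^2$ corresponds to the hyperelliptic involution $\iota$ (up to the framing data). Under the Burau representation $V = H^1(C_D)$, the involution $\iota$ acts by $-1$. Indeed, the quotient $C_D/\iota \cong \mathbb{P}^1$ has no $H^1$, so the full twist acts on $V$ by $-1$. This is the standard fact that for odd $n$ the image of $\Delta^2$ under the reduced Burau representation at $t=-1$ is $-I$.

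Since $V_\lambda \subseteq S^\lambda(V) \subseteq V^{\otimes |\lambda|}$ and $-I$ acts on $V^{\otimes|\lambda|}$ by $(-1)^{|\lambda|}$, the central element $z=\Delta^2$ acts on $V_\lambda$ by the scalar $(-1)^{|\lambda|}=-1$ when $|\lambda|$ is odd.

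Next I would invoke the standard center-kills lemma. For a group $G$, a central element $z$, and a $\mathbb{Q}$-module $M$, conjugation by $z$ induces the identity on $H_*(G,M)$ when paired with the module map $m\mapsto zm$. Precisely, the pair $(c_z, m\mapsto z\cdot m)$ induces the identity on homology, since inner automorphisms act trivially. But $c_z=\mathrm{id}$ because $z$ is central, and $m\mapsto zm$ is multiplication by $-1$. Hence multiplication by $-1$ is the identity on $H_k(\beta_n,V_\lambda)$, so $2H_k=0$ and the group vanishes rationally for all $k$. The small case $n=1$ is trivial: $\beta_1$ is trivial and $V=0$, so only $\lambda=\emptyset$ arises, and it has even size.

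The main obstacle is the careful verification that $\Delta^2$ acts by exactly $-I$ on the integral reduced Burau representation specialized appropriately. The worry is that the marked tangent vectors and framing in $H_g^{1,0}$ could introduce an extra twist, and that the half Tate twist might contribute a sign. I would settle this by direct computation: $\Delta^2$ acts on the unreduced Burau representation at $t$ by the scalar $t^{n}$ on the reduced part. At $t=-1$ with $n$ odd this gives $-1$. Alternatively, the geometric identification of $\Delta^2$ with the hyperelliptic involution lifted to the double cover branched at $n$ points plus infinity gives the same answer. The twist is a scalar on the whole local system, so it does not affect the sign argument.
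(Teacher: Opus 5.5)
Your argument is correct. The full twist $\Delta^2$ is central and maps to $t^nI$ under the reduced Burau representation, hence to $-I$ at $t=-1$ when $n$ is odd; it therefore acts on $V_\lambda\subseteq V^{\otimes|\lambda|}$ by $(-1)^{|\lambda|}=-1$, and since central elements act trivially on group homology, $2H_k(\beta_n,V_\lambda)=0$.

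The paper does not spell this out: it cites Bergstr\"om--Diaconu--Petersen--Westerland, Proposition~4.3.6, for $S^\lambda(V)$ and passes to the direct summand $V_\lambda$. Your central-element argument is the standard mechanism behind that kind of vanishing, and applying it directly to $V_\lambda$ makes the summand step unnecessary. Your worry about the Tate twist is moot, since this is a purely group-homological statement.
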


\begin{proof}
Since $V_\lambda$ is a direct summand of $S^\lambda(V)$ and homology is additive under finite direct sums, the result follows from \cite[Proposition 4.3.6]{bergström2024hyperellipticcurvesscanningmap}.
\end{proof}

\begin{thm}\label{thm:vanishing_lambda1}
    If $\lambda_1 > |\lambda|/2$ then $H_k(H_\infty^{1,0}, \mathbb{V}_\lambda) = 0$ for all $k$.
\end{thm}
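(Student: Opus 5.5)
We propose to deduce this from the explicit computation of the stable twisted homology of braid groups in \cite{bergström2024hyperellipticcurvesscanningmap}, passing through Lemma~\ref{lem:stable_agree} to transfer between $H_g^{1,0}$ and $H_g^{0,1}$ as needed. The relevant input is the scanning-map description: the stable homology $H_*(H_\infty^{1,0}, S^\lambda(\mathbb{V}))$ is computed as the homology of a section space (equivalently a Koszul-type complex) built from $\mathbb{V}$, and the resulting answer is expressed in terms of symmetric-function data. Concretely, the generating function over all $\lambda$ of the stable homology with coefficients in $S^\lambda(\mathbb{V})$ is a plethystic expression in which each copy of $\mathbb{V}$ carries a degree weight. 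We first extract from this the statement that $S^\lambda(\mathbb{V})$ has nonzero stable homology only if $\lambda$ is built from ``blocks'' pairing tensor factors. The intuition is that each tensor factor of $\mathbb{V}$ must be absorbed by the symplectic form or by a column-type (exterior) piece, which forces the first row to be at most half of $|\lambda|$.

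The key steps, in order, are as follows.
\begin{enumerate}
\item We recall the formula from \cite{bergström2024hyperellipticcurvesscanningmap} expressing the stable homology with coefficients in $\mathbb{V}^{\otimes N}$, as an $S_N$-representation, via the stable homology of the section space. Its character is a sum over set partitions of $\{1,\dots,N\}$ into blocks of size $2$ (the pairing/Weierstrass contribution) together with exterior-type blocks.
\item We decompose this $S_N$-representation into Specht modules $\sigma_\mu$. Each block of size $2$ contributes an induced representation from $S_2 \wr S_k$ of the sign-twisted type, giving only partitions $\mu$ with $\mu_1 \le N/2$. This is the Littlewood-type identity that $\mathrm{Ind}$ of such representations involves only partitions with all column lengths even, or its transpose.
\item We pass from $S^\lambda(\mathbb{V})$ to the irreducible summand $\mathbb{V}_\lambda$. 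Since $S^\lambda(\mathbb{V}) = \bigoplus_\nu c\,\mathbb{V}_\nu$ with $\mathbb{V}_\lambda$ occurring with multiplicity one, and the remaining $\nu$ are obtained by Littlewood contractions with $|\nu|<|\lambda|$, we induct on $|\lambda|$. If $\lambda_1>|\lambda|/2$, then vanishing for $S^\lambda$ together with vanishing for the smaller $\nu$ with $\nu_1>|\nu|/2$ yields vanishing for $\mathbb{V}_\lambda$. The cases with $\nu_1\le|\nu|/2$ must instead be handled by a direct triangularity argument.
\end{enumerate}

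We expect step 3, the passage from Schur functors to symplectic irreducibles, to be the main obstacle. Contraction can produce $\nu$ with $\nu_1 \le |\nu|/2$ even when $\lambda_1 > |\lambda|/2$, so naive induction fails. To handle this, we would first try to work directly with the symplectic character expansion, using Koike–Terada universal characters: rewrite the answer of \cite{bergström2024hyperellipticcurvesscanningmap} in the basis $s_{\langle\lambda\rangle}$ and show that its coefficients vanish when $\lambda_1>|\lambda|/2$. Failing that, we would cite the corresponding explicit statement in \cite{bergström2024hyperellipticcurvesscanningmap}, which, as we recall, computes the stable homology with coefficients in $\mathbb{V}_\lambda$ directly.
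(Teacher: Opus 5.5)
Your fallback is exactly the paper's proof: Lemma~\ref{lem:stable_agree} combined with \cite[Theorem 7.0.6]{bergström2024hyperellipticcurvesscanningmap}, and nothing else. The derivation you place in front of it, however, does not establish the statement.

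\textbf{Step 2.} All of the content sits in Step~2, and that step rests on a guessed structure: ``blocks of size $2$ together with exterior-type blocks'', plus the hedge ``or its transpose''. It should instead rest on the formula actually available, Theorem~\ref{thm:poincare_series}. That formula is a plethystic expression in all $h_{2r}$, $r\ge1$, and their Adams operations. Which Littlewood identity applies is the whole point, since the even-rows version contains $(N)$ and bounds nothing. The Adams terms are exactly where such a bound can fail: $\psi_2(h_1)=p_2=s_2-s_{11}$ contains $s_{11}$, i.e.\ $\lambda=(2)$ in the $s_{\lambda'}$ indexing. So an argument must use that only $h_{2r}$ with $r\ge1$ occur, and your sketch never isolates this.

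\textbf{Step 3.} Step~3, which you call the main obstacle, is not one. Theorem~\ref{thm:poincare_series} is already indexed by the symplectic irreducibles $\mathbb{V}_\lambda$. Even if you start from $S^\lambda(\mathbb{V})$, $\mathbb{V}_\lambda$ is a direct summand, so vanishing for $S^\lambda(\mathbb{V})$ gives vanishing for $\mathbb{V}_\lambda$ at once, as in Proposition~\ref{prop:vanishing_odd}. No induction is needed.

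Your assertion that contraction can produce $\nu$ with $\nu_1\le|\nu|/2$ is also false. $\mathbb{V}_\nu$ occurs in $S^\lambda(\mathbb{V})$ only if $c^\lambda_{\beta\nu}\neq0$ for some $\beta$ with even columns. Then $\lambda_1\le\beta_1+\nu_1$ and $\beta_1\le|\beta|/2$ force $\nu_1>|\nu|/2$.

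\textbf{A working route.} A correct self-contained argument goes directly through \eqref{eq:poincare_series}, whose right-hand side is a $\mathbb{Q}[[z]]$-linear combination of products of $\psi_k(h_{2r})$ with $r\ge1$.
\begin{enumerate}
\item Apply Cauchy's identity to the alphabet $(u,\zeta u,\dots,\zeta^{k-1}u)$, $\zeta=e^{2\pi i/k}$. This gives $\psi_k(h_m)=\sum_{|\mu|=km,\,\ell(\mu)\le k}s_\mu(1,\zeta,\dots,\zeta^{k-1})\,s_\mu$.
\item Hence each constituent of $\psi_k(h_{2r})$ has $\ell(\mu)\le k\le|\mu|/2$.
\item Length is subadditive under Littlewood--Richardson products, so the same bound holds for every product.
\end{enumerate}
Therefore $s_{\lambda'}$ can appear only when $\ell(\lambda')=\lambda_1\le|\lambda|/2$, which is the theorem.
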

\begin{proof}
 The result follows from combining Lemma~\ref{lem:stable_agree} and \cite[Theorem 7.0.6]{bergström2024hyperellipticcurvesscanningmap}.
\end{proof}

\begin{thm}\label{thm:vanishing_ell_lambda}
    If $k < \ell(\lambda)/2$ then $H_k(H_\infty^{1,0}, \mathbb{V}_\lambda) = 0$.
\end{thm}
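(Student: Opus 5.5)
This vanishing result should be a direct import from \cite{bergström2024hyperellipticcurvesscanningmap}, in the same way as Theorem~\ref{thm:vanishing_lambda1}. First we use Lemma~\ref{lem:stable_agree} to move from the stable homology of $H_\infty^{1,0}$ to that of $H_\infty^{0,1}$. Bergstr\"om--Diaconu--Petersen--Westerland compute the latter through the scanning map. The stable homology with coefficients in $S^\lambda(\mathbb{V})$ is described as the homology of an explicit infinite loop space, or equivalently via a Koszul-type complex built from the stable homology of $\beta_\infty$ with symplectic coefficients. In this description, a summand indexed by $\lambda$ appears only in homological degrees bounded below in terms of the number of boxes that cannot be absorbed in degree zero.

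The key steps are as follows.
\begin{itemize}
\item[(i)] Recall the decomposition of the stable homology $\bigoplus_\lambda H_*(H_\infty^{0,1},\mathbb{V}_\lambda)\otimes V_\lambda$ as a free graded-commutative algebra, or a quotient of one, on generators that sit in bidegrees $(k,\lambda)$.
\item[(ii)] Check that every generator satisfies $\ell(\lambda)\le 2k$. The degree-zero generators carry only $\lambda$ with $\ell(\lambda)\le 1$ at worst, coming from the trivial and standard pieces. Each higher generator in degree $k$ carries Schur functors of length at most $2k$, since it comes from $\wedge^{\le 2k}$ of the standard representation.
\item[(iii)] Observe that lengths are subadditive under the Littlewood--Richardson or Newell--Littlewood product, i.e.\ $\ell(\mu\cdot\nu)\le \ell(\mu)+\ell(\nu)$, while homological degrees add. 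Hence every $\mathbb{V}_\lambda$-isotypic part occurring in degree $k$ has $\ell(\lambda)\le 2k$, which gives the claimed vanishing for $k<\ell(\lambda)/2$.
\item[(iv)] Pass from $S^\lambda$ to the irreducible summand $\mathbb{V}_\lambda$ by additivity, exactly as in the proof of Proposition~\ref{prop:vanishing_odd}.
\end{itemize}

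I expect the main obstacle to be step (ii): pinning down precisely which bidegrees the generators occupy, in particular confirming that the bound is $\ell(\lambda)\le 2k$ and not a weaker linear one. In practice I would first look for the statement in \cite{bergström2024hyperellipticcurvesscanningmap}, where such a connectivity or vanishing-line result (roughly ``$H_k$ vanishes below degree $\ell(\lambda)/2$'') is likely proved directly. The proof would then be a citation combined with Lemma~\ref{lem:stable_agree}.
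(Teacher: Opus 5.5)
Your fallback is the paper's proof. The paper moves from $H_\infty^{1,0}$ to $H_\infty^{0,1}$ by Lemma~\ref{lem:stable_agree}, then invokes \cite[Theorem 7.0.14]{bergström2024hyperellipticcurvesscanningmap}, and nothing else is needed.

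The sketch (i)--(iv), however, is not a proof on its own. All of its content sits in step (ii), and what you wrote there is wrong in two ways.

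\textbf{Degree zero.} There are no degree-zero generators besides the unit. The statement at $k=0$ says $H_0(H_\infty^{1,0},\mathbb{V}_\lambda)=0$ for every nonempty $\lambda$. A degree-zero generator with $\ell(\lambda)=1$ would already violate $\ell(\lambda)\le 2k$, and its powers would produce arbitrarily long $\lambda$ in degree $0$. So your claim ``$\ell(\lambda)\le 1$ at worst'' in degree zero contradicts the very bound you want.

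\textbf{Higher degrees.} The bound on generators cannot be read off term by term from Theorem~\ref{thm:poincare_series}. The input $z^rh_{2r}$ is where your ``$\wedge^{2k}$ in degree $k$'' heuristic comes from: under $s_{\lambda'}$ it records $\mathbb{V}_{(1^{2r})}$ in degree $r$. But it gets shifted by $z^{-1}$, so naively $z^{-1}\cdot z^2h_4$ would put $\mathbb{V}_{(1^4)}$ in degree $1$, violating the bound. Only after the quadratic terms of $\mathbf{Log}$ are included does the coefficient of $z$ in $z^{-1}\mathbf{Log}\bigl(z+\sum_{r\ge 0}z^rh_{2r}\bigr)-1-h_2$ come out as $h_4-h_2[1+h_2]=-(1+h_2+s_{(2,2)})$. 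This corresponds to $\lambda=\emptyset,(1,1),(2,2)$, all of length at most $2$.

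So the generator bound comes from a genuine cancellation, and that is exactly what the cited theorem supplies. You should present the citation together with Lemma~\ref{lem:stable_agree} as the proof, not the heuristic sketch.
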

\begin{proof}
 The result follows from combining Lemma~\ref{lem:stable_agree} and \cite[Theorem 7.0.14]{bergström2024hyperellipticcurvesscanningmap}.
\end{proof}

\begin{thm}\label{thm:purity}
The stable homology group $H_k^{\mathrm{\acute{e}t}}(\mathcal{H}_\infty^{1,0} \otimes \overline{\mathbb{F}}_q, \mathbb{V}_\lambda)$ is pure Tate of weight $-2k$.
\end{thm}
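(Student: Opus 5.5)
This result is not proved from scratch here. It is a transfer of the purity theorem of \cite{bergström2024hyperellipticcurvesscanningmap} for the stable homology of hyperelliptic moduli with symplectic coefficients, combined with the comparisons of Theorem~\ref{thm:comparisons} and Lemma~\ref{lem:stable_agree}.

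\textbf{Step 1: reduce to a stable range.} By Theorem~\ref{thm:stable_range}, for fixed $k$ and $\lambda$ the groups $H_k(\beta_{2g+1},V_\lambda)$ are independent of $g$ once $k \le \theta(g)$. Theorem~\ref{thm:comparisons} identifies these groups with $H_k^{\mathrm{\acute{e}t}}(\mathcal{H}_g^{1,0}\otimes\overline{\mathbb{F}}_q,\mathbb{V}_\lambda)$. The stabilization maps are induced by algebraic maps: gluing or degenerating a Weierstrass point, equivalently adding a root to $D$ near infinity, realized through the scanning and stabilization constructions of \cite{bergström2024hyperellipticcurvesscanningmap}. They are therefore Frobenius-equivariant, so the stable group $H_k^{\mathrm{\acute{e}t}}(\mathcal{H}_\infty^{1,0}\otimes\overline{\mathbb{F}}_q,\mathbb{V}_\lambda)$ is well defined as a Galois module.

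\textbf{Step 2: import purity and transfer it.} I will invoke the theorem of \cite{bergström2024hyperellipticcurvesscanningmap} that the stable homology with coefficients in $S^\lambda(\mathbb{V})$ is pure Tate of weight $-2k$ in degree $k$. This is proved via the scanning map: one identifies it with the homology of an explicit $E_2$-algebra or section space, whose homology is generated by classes of Tate type. Lemma~\ref{lem:stable_agree}, or the same argument applied directly to $H_g^{1,0}$, transfers this from $H_g^{0,1}$ to $H_g^{1,0}$. I must check that the identification in Lemma~\ref{lem:stable_agree} is compatible with Frobenius, i.e.\ induced by algebraic correspondences. If it is not, I would instead rerun the scanning argument of \cite{bergström2024hyperellipticcurvesscanningmap} for one Weierstrass marking.

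\textbf{Step 3: pass from $S^\lambda$ to $\mathbb{V}_\lambda$.} Purity of pure Tate type passes to direct summands. So it suffices that $\mathbb{V}_\lambda$ is a Frobenius-stable direct summand of $S^\lambda(\mathbb{V})$, which holds because the symplectic decomposition is functorial. Homology of $S^\lambda(\mathbb{V})$ splits accordingly, and each summand inherits weight $-2k$ and Tate type. The half twist $(1/2)$ normalizes $\mathbb{V}_\lambda$ to weight zero, so the coefficient system contributes no weight shift.

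\textbf{Main obstacle.} The crux is the Frobenius compatibility of the scanning and stabilization identifications, which are defined topologically. I would use the comparison via the normal crossings compactification from \cite{AbramovichCortiVistoli2003} to see that the relevant maps come from algebraic maps of stacks over $\mathbb{Z}[1/2]$. Otherwise I would cite the corresponding statement of \cite{bergström2024hyperellipticcurvesscanningmap}, where this compatibility is established.
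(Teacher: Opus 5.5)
Your proposal is correct and follows the paper's proof: both import purity of the stable homology with $S^\lambda(\mathbb{V})$ coefficients on $\mathcal{H}^{0,1}_\infty$ from Bergstr\"om--Diaconu--Petersen--Westerland (their Theorem 9.2.2), pass to the Galois-stable summand $\mathbb{V}_\lambda$, and transfer to $\mathcal{H}^{1,0}_\infty$ via Lemma~\ref{lem:stable_agree}. The differences are ordering and emphasis. The paper explicitly moves from $\overline{\mathbb{Q}}$ to $\overline{\mathbb{F}}_q$ using the Galois-compatibility of the comparison in Theorem~\ref{thm:comparisons}, a step you leave implicit, whereas it does not separately justify the Frobenius-equivariance of the stabilization maps or of the identification in Lemma~\ref{lem:stable_agree}, which you rightly flag.
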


\begin{proof}
By~\cite[Theorem 9.2.2]{bergström2024hyperellipticcurvesscanningmap}, we know that 
$H_k^{\mathrm{\acute{e}t}}(\mathcal{H}_{\infty}^{0,1} \otimes \overline{\mathbb{Q}}, S^\lambda(\mathbb{V}))$ is pure Tate of weight $-2k$ with our normalization. 
The same follows for  $H_k^{\mathrm{\acute{e}t}}(\mathcal{H}_{\infty}^{0,1} \otimes \overline{\mathbb{Q}}, \mathbb{V}_\lambda)$ since $\mathbb{V}_\lambda$ is a Galois-stable direct summand of $S^\lambda(\mathbb{V})$. 
The comparison theorem is Galois-compatible, so we may pass to $H_k^{\mathrm{\acute{e}t}}(\mathcal{H}_{\infty}^{0,1} \otimes \overline{\mathbb{F}}_q, \mathbb{V}_\lambda)$ by Theorem~\ref{thm:comparisons}. 
Finally, we apply Lemma~\ref{lem:stable_agree} to complete the proof.
\end{proof}

\subsection{Unstable range}

For the unstable homology, we use the following bounds.

\begin{lem}\label{lem:dim_bound_Braid}
If $V$ is a representation of the Braid group $\beta_n$, then 
    \begin{equation*}
        \dim H_k(\beta_n, V) \leq {n-1 \choose k} \dim V.
    \end{equation*}
\end{lem}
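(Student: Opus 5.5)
We prove the bound using a small free resolution of the trivial module over $\mathbb{Z}[\beta_n]$. Such a resolution has length $n-1$, and its $k$-th term is free of rank $\binom{n-1}{k}$. Tensoring it with $V$ then yields a chain complex computing $H_k(\beta_n,V)$ whose $k$-th chain group has dimension $\binom{n-1}{k}\dim V$. Since homology is a subquotient of the chains, the bound follows at once.

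The required resolution comes from the topology recalled above. The configuration space $\mathrm{Conf}_n(D)$ of $n$ unordered points in the open disk is a $K(\beta_n,1)$. It admits the Fox--Neuwirth/Fuks cell structure on its one-point compactification. Equivalently, it deformation retracts onto the Salvetti complex of the $A_{n-1}$ arrangement modulo the symmetric group. That is a finite CW complex whose $k$-cells are in bijection with subsets of size $k$ of the $n-1$ simple reflections $\{s_1,\dots,s_{n-1}\}$, giving $\binom{n-1}{k}$ cells in dimension $k$. This is the standard Salvetti--De Concini complex for Artin groups of finite type, which in the braid case also appears as the ``bar'' cellular model used by Fuks and by Bergstr\"om--Diaconu--Petersen--Westerland. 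Let $X$ be this complex and $\widetilde X$ its universal cover. The cellular chains $C_\bullet(\widetilde X)$ then form a free $\mathbb{Z}[\beta_n]$-resolution of $\mathbb{Z}$ with $C_k(\widetilde X)\cong \mathbb{Z}[\beta_n]^{\binom{n-1}{k}}$; this uses the $K(\pi,1)$ property of $X$.

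Then, for any $\mathbb{k}[\beta_n]$-module $V$ over a field $\mathbf{k}$ of characteristic zero (here $\mathbb{Q}_\ell$ or $\overline{\mathbb{Q}}_\ell$), we have
\[
H_k(\beta_n,V)=H_k\bigl(C_\bullet(\widetilde X)\otimes_{\mathbb{Z}[\beta_n]}V\bigr),
\]
and the $k$-th term of the complex on the right is $V^{\binom{n-1}{k}}$. Therefore $\dim H_k(\beta_n,V)\le\dim\ker\partial_k\le \binom{n-1}{k}\dim V$. This holds for all $k$ and vanishes for $k>n-1$, consistent with the convention $\binom{n-1}{k}=0$. Equivalently, one can view this as homology of $\mathrm{Conf}_n(D)$ with coefficients in the local system attached to $V$, computed cellularly.

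The main obstacle is justifying the cell count, i.e.\ that the minimal model of $\mathrm{Conf}_n(D)$ has exactly $\binom{n-1}{k}$ cells in degree $k$. This is not proved in the paper and must be cited. The cleanest route is Salvetti's complex for the finite Coxeter group $S_n$: its cells modulo $S_n$ are indexed by subsets of the Coxeter generators, and Deligne's theorem gives the $K(\pi,1)$ property. Alternatively, the Fox--Neuwirth stratification by the numbers of points on vertical lines gives cells indexed by compositions of $n$. There are $\binom{n-1}{j}$ compositions with $j+1$ parts, and they produce cells of the right dimensions after Poincar\'e duality on the $2n$-manifold. With either citation in hand, the rest of the argument is formal.
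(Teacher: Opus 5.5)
Your proof is correct. A finite $K(\beta_n,1)$ with exactly $\binom{n-1}{k}$ cells in degree $k$ (the Salvetti complex modulo $S_n$, or dually the Fox--Neuwirth--Fuks cells of the one-point compactification combined with Poincar\'e duality) gives a free resolution whose $k$-th term, tensored with $V$, has dimension $\binom{n-1}{k}\dim V$, and $H_k$ is a subquotient of that term.

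The paper gives no argument of its own and simply cites \cite[Lemma 11.3.13]{bergström2024hyperellipticcurvesscanningmap}. The standard content behind such a bound is exactly this cell count, so your route is essentially the same, just made explicit. Two minor points: the two cell models are dual rather than literally ``equivalent'', and the bound holds over any field, not only in characteristic zero.
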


\begin{proof}
    See \cite[Lemma 11.3.13]{bergström2024hyperellipticcurvesscanningmap}.
\end{proof}

\begin{lem}\label{lem:dim_bounds_hook}
    Let $V_\lambda = S^\lambda_{\mathrm{Sp}}(V)$ be the symplectic irreducible representation with highest weight $\lambda$, where $\lambda = (a, 1^b)$ is a hook with $a + b = n$ and $\dim V = 2g$. Then
    \begin{equation}\label{eq.dimV}
        \dim V_\lambda \leq (2g) 2^{4g+a-1}.
    \end{equation}
\end{lem}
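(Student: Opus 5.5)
Since $V_\lambda$ is the irreducible component of highest weight $\lambda$ inside $S^\lambda(V)$, it is enough to bound $\dim S^\lambda(V)$, or more crudely the dimension of a tensor space containing it. The cleanest route for a hook $\lambda=(a,1^b)$ with $a+b=n$ uses the standard exact sequence relating hooks to symmetric and exterior powers. We have $\mathrm{Sym}^a V\otimes \Lambda^b V \cong S^{(a,1^b)}(V)\oplus S^{(a+1,1^{b-1})}(V)$ (Pieri rule for $\mathrm{GL}$). Hence
\begin{equation*}
\dim V_\lambda \le \dim S^{(a,1^b)}(V) \le \dim \mathrm{Sym}^a V \cdot \dim \Lambda^b V = \binom{2g+a-1}{a}\binom{2g}{b}.
\end{equation*}

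Next I bound each factor by powers of $2$. For the exterior power, $\binom{2g}{b}\le 2^{2g}$. For the symmetric power, I use $\binom{2g+a-1}{a}=\binom{2g+a-1}{2g-1}\le 2^{2g+a-1}$. Multiplying gives
\begin{equation*}
\dim V_\lambda\le 2^{4g+a-1}\le (2g)\,2^{4g+a-1},
\end{equation*}
which is the claim \eqref{eq.dimV}, with a spare factor of $2g$.

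The extra factor $2g$ gives room to accommodate the case $b\ge 2g$, where the symplectic irreducible might be indexed with $\ell(\lambda)>g$. In that case, depending on conventions, $V_\lambda=0$ or is defined via modification rules. Either way it is still a subquotient of $V^{\otimes n}$ inside $S^\lambda(V)$, and $S^\lambda(V)=0$ once $\ell(\lambda)>2g$. So the bound holds trivially there.

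The only point needing care is justifying $V_\lambda\subseteq S^\lambda(V)$ as a summand for symplectic restriction. This is how the paper defines $\mathbb{V}_\lambda$ (as a summand of $S^\lambda(\mathbb{V})$), so I take it as given. If one prefers to avoid the Pieri decomposition, there is an alternative: $S^{(a,1^b)}(V)$ is a quotient of $\mathrm{Sym}^a V\otimes\Lambda^b V$ via the Schur functor construction, and this gives the same inequality directly.
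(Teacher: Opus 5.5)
Your argument is correct, but it takes a different route from the paper.

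\textbf{The paper's route.} It computes $\dim S^{(a,1^b)}(V)$ exactly with the hook-length (hook-content) formula, obtaining $\frac{2g-b}{n}\binom{2g+a-1}{a-1}\binom{2g}{b}$. It then bounds the three factors by $2g$, $2^{2g+a-1}$ and $2^{2g}$. The factor $2g$ in the statement comes only from the crude step $\frac{2g-b}{n}\le 2g$.

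\textbf{Your route.} You never compute the dimension. You use the Pieri identity $h_ae_b=s_{(a,1^b)}+s_{(a+1,1^{b-1})}$ to exhibit $S^{(a,1^b)}(V)$ as a summand of $\mathrm{Sym}^aV\otimes\Lambda^bV$. This is more structural and gives the sharper bound $2^{4g+a-1}$. It is consistent with the exact formula, since $\binom{2g+a-1}{a}=\frac{2g}{a}\binom{2g+a-1}{a-1}\ge\frac{2g-b}{a+b}\binom{2g+a-1}{a-1}$. With it, the factor $g$ in the paper's unstable-range estimates could be dropped, though that is immaterial next to the exponential factors there. The paper's route, in exchange, yields the exact dimension. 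Both arguments rest on the same input, $V_\lambda\subseteq S^\lambda(V)$.

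\textbf{A small correction.} The spare factor $2g$ is not needed for the case $b\ge 2g$. Your chain $\dim S^{(a,1^b)}(V)\le\binom{2g+a-1}{a}\binom{2g}{b}$ holds for every $b$, and the right side vanishes for $b>2g$, so $2^{4g+a-1}$ is valid throughout. This makes your version marginally more robust than the paper's factorial expression, which only makes sense for $b\le 2g-1$. You should also drop the aside about modification rules. A virtual character obtained by such rules need not live inside $S^\lambda(V)$ at all. Under the paper's convention $\mathbb{V}_\lambda\subseteq S^\lambda(\mathbb{V})$ the issue never arises.
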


\begin{proof}
    We have $\dim V_\lambda \leq \dim S^\lambda(V)$ and, by the hook-length formula,  
    \begin{equation}\label{eq.hlf}
        \dim S^\lambda(V) = \frac{(2g+a-1)!}{(2g-b-1)!n(a-1)!b!} = \frac{2g-b}{n} {2g+a-1 \choose a-1} {2g \choose b}.
    \end{equation}
Equation~\eqref{eq.dimV} follows from combining equation~\eqref{eq.hlf} with the following bounds:
    \begin{equation*}
        {2g+a-1 \choose a-1} \leq 2^{2g+a-1}, 
        \quad {2g \choose b} \leq 2^{2g}.
    \end{equation*}
\end{proof}

\begin{lem}\label{lem:mixed_weight}
    The \'etale homology
    \begin{equation*}
        H_k^{\mathrm{\acute{e}t}}(\mathcal{H}_g^{1,0} \otimes \overline{\mathbb{F}}_p, \mathbb{V}_\lambda),
    \end{equation*}
    is mixed of weight $\leq -k$.
\end{lem}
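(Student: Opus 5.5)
The plan is to obtain the statement from Deligne's Weil II theorem, applied to compactly supported cohomology and then transported to homology through the convention $H_k^{\mathrm{\acute{e}t}}(X,\mathcal{F}) = H_c^{2\dim X - k}(X,\mathcal{F}(\dim X))$. Write $X = \mathcal{H}_g^{1,0}$ and $d=\dim X$.

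The first step is to establish that $\mathbb{V}_\lambda$ is pure of weight zero, since the weight bound is needed as an input. The stalk at $D$ is $H^1_{\mathrm{\acute{e}t}}(C_D,\overline{\mathbb{Q}}_\ell)(1/2)$. By the Riemann hypothesis for curves, Frobenius acts on it with eigenvalues of absolute value $q^{1/2}\cdot q^{-1/2}=1$, under every embedding $\overline{\mathbb{Q}}_\ell\hookrightarrow\mathbb{C}$. Hence $\mathbb{V}$ is pointwise pure of weight $0$. Schur functors $S^\lambda(\mathbb{V})$ are direct summands of tensor powers $\mathbb{V}^{\otimes|\lambda|}$, so they are pure of weight $0$. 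The same holds for the summand $\mathbb{V}_\lambda$, which the paper has already assumed to be twisted to weight zero.

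The second step applies Deligne's theorem (Weil II, Thm.~3.3.1): for a separated scheme or Deligne--Mumford stack $X$ of finite type over $\mathbb{F}_q$ and a sheaf $\mathcal{F}$ mixed of weight $\le w$, the group $H_c^i(X\otimes\overline{\mathbb{F}}_q,\mathcal{F})$ is mixed of weight $\le w+i$. For the stack $\mathcal{H}_g^{1,0}$ we have two options. We can cite the stacky extension of this theorem (Sun, or Laszlo--Olsson). Alternatively, we can use the presentation of $\mathcal{H}_g^{1,0}$ as a quotient of the configuration-type scheme of monic squarefree degree-$(2g+1)$ polynomials, with tangent-vector data, by a torus $\mathbb{G}_m$. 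Compactly supported cohomology of the quotient stack is then computed from that of the scheme via the Leray/Hochschild--Serre spectral sequence, and the weights of $H_c^*(B\mathbb{G}_m)$ only decrease the bound, since $H_c^{-2j}(B\mathbb{G}_m)$ has weight $-2j$. We would try the citation first.

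The final step is the bookkeeping. Apply the bound with $w=0$ and $i=2d-k$: $H_c^{2d-k}$ is mixed of weight $\le 2d-k$. Twisting by $(d)$ lowers weights by $2d$, so $H_k^{\mathrm{\acute{e}t}}$ is mixed of weight $\le -k$, as claimed.

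The main obstacle is the stack issue in the second step, together with making sure the twist conventions for $\mathbb{V}_\lambda$ genuinely give weight zero. For the twist, note that $S^\lambda(\mathbb{V})$ built from the half-twisted $\mathbb{V}$ is already weight zero, so no further twist should be needed. For the stack, the fallback is the $\mathbb{G}_m$-quotient argument above, where the cohomology of $B\mathbb{G}_m$ is pure Tate of nonpositive weight in compact support.
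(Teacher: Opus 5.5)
Your proposal is correct and is essentially the paper's argument: $\mathbb{V}_\lambda$ is pure of weight $0$, so $\mathbb{V}_\lambda(d)$ is pure of weight $-2d$. Deligne's Weil~II bound (weights of $H^i_c$ are $\le w+i$) in degree $i=2d-k$ then gives weight $\le -k$ for $H_k^{\mathrm{\acute{e}t}}=H_c^{2d-k}(\mathcal{H}_g^{1,0}\otimes\overline{\mathbb{F}}_p,\mathbb{V}_\lambda(d))$. Your justification of the purity of $\mathbb{V}_\lambda$ and of the passage to stacks (via Sun or Laszlo--Olsson) supplies details the paper leaves implicit. Your fallback presentation is slightly off: once the tangent vector is fixed, the residual symmetry of the model $y^2=f(x)$ is translation $x\mapsto x+b$ (a $\mathbb{G}_a$), not a $\mathbb{G}_m$. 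The same weight argument still goes through, and the citation route makes this moot.
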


\begin{proof}
 Recalling that $\mathbb{V}_\lambda$ is pure of weight zero, the twist $\mathbb{V}_\lambda(d)$ is pure of weight $-2d$. Apply Deligne's purity theorem to conclude that 
    \begin{equation*}
        H_k^{\mathrm{\acute{e}t}}(\mathcal{H}_g^{1,0} \otimes \overline{\mathbb{F}}_p, \mathbb{V}_\lambda)=H_c^{2 d - k}(\mathcal{H}_g^{1,0} \otimes \overline{\mathbb{F}}_p, \mathbb{V}_\lambda(d)) 
    \end{equation*}
    is mixed of weight $\leq -k$.
\end{proof}

\subsection{Stable range}

Let $\Lambda := \varprojlim \mathbb{Z}[x_1,\ldots, x_n]^{S_n}$ be the ring of symmetric functions, and consider the ring of power series in $z$ with coefficients in $\Lambda$, i.e. the completion of $\Lambda[z]$. 
We denote by $\mathbf{Exp}$ and $\mathbf{Log}$ the plethystic exponential and plethystic logarithm respectively, as defined in \cite[2.3.9]{bergström2024hyperellipticcurvesscanningmap}. 
To evaluate the contribution of the stable homology, we will use the following central theorem:

\begin{thm}\label{thm:poincare_series}
We have
    \begin{equation}\label{eq:poincare_series}
        \sum_k \sum_\lambda \dim H_k(\mathcal{H}^{1,0}_\infty, \mathbb{V}_\lambda)(-z)^k s_{\lambda'} = \mathbf{Exp}\left(z^{-1}\mathbf{Log}\left(z + \sum_{r \geq 0} z^{r} h_{2r}\right)-1-h_2\right),
    \end{equation}
    where $\lambda'$ is the conjugate partition of $\lambda$.
\end{thm}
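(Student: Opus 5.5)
This identity is essentially the main stable-homology computation of \cite{bergström2024hyperellipticcurvesscanningmap}, transported to $\mathcal{H}^{1,0}_\infty$ and to the symplectic irreducibles $\mathbb{V}_\lambda$. The plan is to start from their generating function for the stable homology of $H^{0,1}_\infty$ with coefficients in the Schur functors $S^\lambda(\mathbb{V})$, move it to $H^{1,0}_\infty$ by Lemma~\ref{lem:stable_agree}, and then pass from $S^\lambda(\mathbb{V})$ to $\mathbb{V}_\lambda$ by inverting the branching rule from $\mathrm{GL}$ to $\mathrm{Sp}$. This inversion should produce the $-1-h_2$ correction inside $\mathbf{Exp}$.

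\textbf{Step 1: the Bergstr\"om--Diaconu--Petersen--Westerland series.} I would quote their main theorem on the stable twisted homology of braid groups with coefficients in $S^\lambda$ of the reduced Burau representation, written as a generating function in $\Lambda[[z]]$ (the scanning-map computation). I expect it to take the form
\[
\sum_k\sum_\lambda \dim H_k(H^{0,1}_\infty,S^\lambda(\mathbb{V}))(-z)^k s_{\lambda'} = \mathbf{Exp}\Bigl(z^{-1}\mathbf{Log}\Bigl(z+\sum_{r\ge0}z^r h_{2r}\Bigr)-1\Bigr).
\]
The conjugate $\lambda'$ appears because the generating function comes from the free (super)commutative algebra on the homology of the scanning target. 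Rationally, that target is built from the even-degree pieces $h_{2r}$, which are indexed by the symplectic form $\wedge^2$. By Lemma~\ref{lem:stable_agree} the same series computes the $H^{1,0}_\infty$ version. The signs and the exact normalization ($z$ versus $-z$, and $\lambda$ versus $\lambda'$) must be matched carefully to their convention in \cite[2.3.9]{bergström2024hyperellipticcurvesscanningmap}.

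\textbf{Step 2: pass to symplectic irreducibles.} By Littlewood's branching rule, in the stable range $\dim V\gg|\lambda|$ we have
\[
S^\lambda(\mathbb{V})\cong\bigoplus_\mu\Bigl(\sum_{\delta\ \text{even columns}} c^\lambda_{\mu\delta}\Bigr)\mathbb{V}_\mu .
\]
Conjugating the partitions turns "even columns" into "even rows". On generating functions, with the $s_{\lambda'}$ indexing, this says
\[
\sum_\lambda [S^\lambda]\,s_{\lambda'}=\Bigl(\sum_\mu[\mathbb{V}_\mu]\,s_{\mu'}\Bigr)\cdot\sum_{\delta\ \text{even rows}} s_\delta .
\]
The classical identity
\[
\sum_{\delta\ \text{even rows}} s_\delta=\prod_{i\le j}(1-x_ix_j)^{-1}=\mathbf{Exp}(h_2)
\]
then lets me divide. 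Since homology is additive and the $\mathbb{V}_\mu$ are direct summands, dividing the Step 1 series by $\mathbf{Exp}(h_2)$ gives exactly the right-hand side of \eqref{eq:poincare_series}, because $\mathbf{Exp}$ is a homomorphism from addition to multiplication. This uses that the branching multiplicities do not involve $z$: the decomposition holds as local systems and is compatible with homological degree.

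\textbf{Main obstacle.} The branching rule $S^\lambda\to\mathrm{Sp}$ is only clean when $\ell(\lambda)\le g$. I have to argue that for each fixed $\lambda$ and $k$ this eventually holds as $g\to\infty$, so the stable identity is coefficientwise exact. I also need to confirm that the transposition conventions make the correction $h_2$ and not $e_2$. If those conventions differ from what I expect, the fallback is to check the identity in low degree: the $k=0$ term should recover $\mathbb{V}_\emptyset$ only, with the $-1$ accounting for this.
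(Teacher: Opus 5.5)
Your argument is correct, but it is organized differently from the paper's proof. The paper simply cites \cite[Theorem 4.4.12]{bergström2024hyperellipticcurvesscanningmap}, which is already stated for the symplectic irreducibles $\mathbb{V}_\lambda$. You instead start from a Schur-functor version $\sum_{k,\lambda}\dim H_k(\mathcal{H}^{1,0}_\infty,S^\lambda(\mathbb{V}))(-z)^k s_{\lambda'}=\mathbf{Exp}(X-1)$, with $X=z^{-1}\mathbf{Log}(z+\sum_{r\ge0}z^rh_{2r})$, transport it by Lemma~\ref{lem:stable_agree}, and pass to $\mathbb{V}_\lambda$ with Littlewood's restriction rule.

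Your bookkeeping is right:
\begin{itemize}
\item The identity $c^\lambda_{\mu\delta}=c^{\lambda'}_{\mu'\delta'}$ turns the even-column sum into $\sum_{\delta\text{ even rows}}s_\delta=\prod_{i\le j}(1-x_ix_j)^{-1}=\mathbf{Exp}(h_2)$.
\item The multiplicities depend neither on $k$ nor, once $g\ge\ell(\lambda)$, on $g$.
\item Each coefficient of $z^ks_{\lambda'}$ is a finite sum, so dividing by $\mathbf{Exp}(h_2)$ is legitimate in the stable limit.
\end{itemize}

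What your route buys is an explanation of the $-h_2$: it is the inverse of the branching series. This matches how the paper treats Lemma~\ref{lem:necklace}, where a factor $\mathbf{Exp}(-h_2)$ is split off from BDPW's product formula.

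What it costs is this: given the branching rule, your Step~1 is equivalent to the statement itself. The argument therefore has content only if you cite a Schur-functor (or tensor-power) form of the scanning computation that is proved independently of the symplectic version. You should locate that statement rather than guess its shape.

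Your $h_2$-versus-$e_2$ worry is settled by the $z=0$ check, but the mechanism differs slightly from what you wrote.
\begin{itemize}
\item Since $X\equiv 1+h_2$ modulo $z$, it is $-1-h_2$ together, not $-1$ alone, that makes the constant term of the right-hand side equal to $1$. This is consistent with the vanishing of coinvariants of nontrivial $\mathbb{V}_\mu$ under the Zariski-dense monodromy.
\item The Schur-functor series at $z=0$ is $\mathbf{Exp}(h_2)=\sum_{\nu\text{ even rows}}s_\nu$. This correctly counts $\mathrm{Sp}$-invariants in $S^\lambda(\mathbb{V})$, which are nonzero exactly when every column of $\lambda$ has even length.
\end{itemize}

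Finally, the heuristic you give for why $\lambda'$ appears plays no role in the argument and can be dropped.
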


\begin{proof}
    See~\cite[Theorem 4.4.12]{bergström2024hyperellipticcurvesscanningmap}.
\end{proof}

Below we identify the stable homology of the trivial representation.

\begin{lem}\label{lem:trivial_rep_contribution}
    We have 
    \begin{equation*}
        \sum_{k \geq 0} (-1)^k \mathrm{Tr}\left(\mathrm{Frob}_q \mid H_k^{\mathrm{\acute{e}t}}(\mathcal{H}_{\infty}^{1,0} \otimes \overline{\mathbb{F}}_q, \mathbb{V}_\emptyset)\right) = 1-q^{-1}.  
    \end{equation*}
\end{lem}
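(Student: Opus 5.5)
The plan is to compute the ranks of the stable homology with trivial coefficients from Theorem~\ref{thm:poincare_series}, and then determine the Frobenius eigenvalues using the purity statement Theorem~\ref{thm:purity}. Concretely, I expect $H_0$ and $H_1$ to be one-dimensional and all higher groups to vanish, which is Arnold's computation of the stable rational homology of the braid groups. Frobenius should then act by $1$ on $H_0$ and by $q^{-1}$ on $H_1$, giving $1-q^{-1}$.

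\emph{Step 1 (ranks).} Since $\lambda=\emptyset$ gives $\lambda'=\emptyset$ and $s_\emptyset=1$, I extract the part of equation~\eqref{eq:poincare_series} of degree zero in $\Lambda$. This amounts to specialising $h_{2r}=0$ for $r\ge 1$ and keeping $h_0=1$; the map $\Lambda\to\mathbb{Q}$ sending every $p_k$ to $0$ is a ring homomorphism compatible with plethysm in the variable $z$. After the specialisation $z$ is the only variable, with $p_k\circ z=z^k$. Then $\mathbf{Exp}(z)=(1-z)^{-1}$, and hence $1+z=(1-z^2)/(1-z)$ gives $\mathbf{Log}(1+z)=z-z^2$. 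The argument of $\mathbf{Exp}$ becomes $z^{-1}(z-z^2)-1-0=-z$, and $\mathbf{Exp}(-z)=1-z$. Comparing with $\sum_k \dim H_k\,(-z)^k$ yields $\dim H_0=\dim H_1=1$ and $H_k=0$ for $k\ge 2$. As a sanity check, this agrees with Arnold's result via Theorem~\ref{thm:comparisons}, since $\mathbb{V}_\emptyset$ is the trivial local system.

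\emph{Step 2 (Frobenius).} By Theorem~\ref{thm:purity}, each $H_k^{\mathrm{\acute{e}t}}(\mathcal{H}_\infty^{1,0}\otimes\overline{\mathbb{F}}_q,\mathbb{V}_\emptyset)$ is pure Tate of weight $-2k$. So Frobenius acts on the one-dimensional $H_0$ by $1$ and on the one-dimensional $H_1$ by $q^{-1}$. Independently, $H_0=H_c^{2d}(d)$ is the fundamental class and visibly has trace $1$. The alternating sum is therefore $1-q^{-1}$.

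The only delicate point is normalisation. I need to confirm that ``pure Tate of weight $-2k$'' in the homological convention $H_k=H_c^{2d-k}(d)$ with geometric Frobenius means eigenvalue exactly $q^{-k}$, and not $q^{k}$ or a root of unity times $q^{-k}$. I would pin this down using the known count $\#\mathcal{H}_{2g+1}=q^{2g+1}-q^{2g}$, which via the Grothendieck trace formula forces the normalised trace $q^{-d}\sum(-1)^i\mathrm{Tr}(\mathrm{Frob}\mid H_c^i)$ to equal $1-q^{-1}$. This fixes the sign and the normalisation unambiguously.
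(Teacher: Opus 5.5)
Your argument is correct, but you get the ranks by a different route from the paper.

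\textbf{The paper's route.} The paper quotes Arnold's theorem, in the form $H^*(\beta_n;\mathbb{Q})\cong H^*(S^1;\mathbb{Q})$ from 1.3.7 of Bergstr\"om--Diaconu--Petersen--Westerland. This gives $\dim H_0=\dim H_1=1$ and vanishing in higher degrees. It then applies Theorem~\ref{thm:purity} exactly as in your Step 2.

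\textbf{Your route.} You specialise the generating function of Theorem~\ref{thm:poincare_series} along the augmentation $\Lambda\to\mathbb{Q}$, which is plethystic evaluation at the empty alphabet. This map commutes with the Adams operations, hence with $\mathbf{Exp}$ and $\mathbf{Log}$, and it kills every $s_{\lambda'}$ with $\lambda\neq\emptyset$. Your computation $\mathbf{Log}(1+z)=z-z^2$, hence $\mathbf{Exp}(-z)=1-z$, is right.

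\textbf{Comparison.} Your version stays entirely inside the machinery already quoted. It is the same kind of specialisation the paper performs at $[t-t^{-1}]$ and $[t-(-t)]$ in Lemmas~\ref{lem:series_LHS} and~\ref{lem:necklace}. It also serves as an internal consistency check, since the paper later feeds the value $1-z$ for the trivial summand into Corollary~\ref{cor:series_evaluation}; there is no circularity, because you use the theorem, not that corollary. The paper's route instead rests on a classical theorem rather than the deep Theorem 4.4.12 of Bergstr\"om--Diaconu--Petersen--Westerland, and it gives the ranks at every finite $n$, not only stably.

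\textbf{On your normalisation check.} The check via $\#\mathcal{H}_{2g+1}=q^{2g+1}-q^{2g}$ is unnecessary, because pure Tate of weight $-2k$ in Theorem~\ref{thm:purity} already forces Frobenius to act by $q^{-k}$ on $H_k$. Also, using the finite-$g$ point count to pin down the eigenvalue on the stable $H_1$ tacitly needs the finite-genus homology to vanish in degrees $\geq 2$. That is Arnold's theorem again. With that input, the Grothendieck trace formula on $\mathrm{Conf}_{2g+1}(\mathbb{A}^1)$ gives $1-q^{-1}$ directly at every finite $g$, which is effectively a third proof.
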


\begin{proof}
    By 1.3.7 in \cite{bergström2024hyperellipticcurvesscanningmap}, we have $H^*(\beta_n,\Q)\cong H^*(S^1,\Q)$, which implies
    \begin{equation*}
        \dim H_k(\beta_\infty, V_\emptyset) = \begin{cases}
            1 & \text{ if } k \leq 1, \\
            0 & \text{ if } k > 1,
        \end{cases}
    \end{equation*}
    and we have that $H_k(\beta_\infty, V_\emptyset)$ are pure Tate twists by Theorem~\ref{thm:purity}.
\end{proof}

Next, we will extract from Theorem~\ref{thm:poincare_series} the contribution of stable homology coming from hook representations. 
Given an element $f \in \Lambda$ we define the plethystic evaluation on a virtual alphabet $x-y$, denoted $f[x-y]$, as the ring homomorphism $\Lambda \to \mathbb{Z}[x,y]$ defined by $p_r[x-y] \mapsto x^r-y^r$. 
It is easy to check that plethystic evaluation commutes with the Adams operation, defined in \cite[2.2.13]{bergström2024hyperellipticcurvesscanningmap}, and thus commutes with $\mathbf{Exp}$ and $\mathbf{Log}$.

\begin{lem}\label{lem:non_hook_vanish}
    Let $\lambda$ be a non-empty partition. We have
    \begin{equation*}
        s_\lambda[x-y] = s_{\lambda'}[x-y] = 0,
    \end{equation*}
    if $\lambda$ is not a hook, and 
    \begin{equation*}
        s_{\lambda}[x-y] = (x-y)x^{a-1}(-y)^b,
    \end{equation*}
    if $\lambda = (a,1^b)$ is a hook with $a \geq 1$.

\end{lem}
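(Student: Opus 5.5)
Since $f\mapsto f[x-y]$ is a ring homomorphism $\Lambda\to\mathbb{Z}[x,y]$, I will compute the images of the elementary and complete homogeneous symmetric functions via generating functions. Then I will pass to Schur functions using Jacobi--Trudi. Finally, I will handle the conjugate partition using the involution $\omega$.

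The generating function of the $h_r$ is $H(t)=\sum_r h_r t^r=\exp\big(\sum_{r\ge1} p_r t^r/r\big)$. Applying $p_r\mapsto x^r-y^r$ gives
\begin{equation*}
H(t)[x-y]=\exp\Big(-\log(1-xt)+\log(1-yt)\Big)=\frac{1-yt}{1-xt}.
\end{equation*}
Hence $h_0[x-y]=1$ and $h_r[x-y]=x^r-yx^{r-1}=(x-y)x^{r-1}$ for $r\ge1$. Similarly, from $E(t)=\sum_r e_r t^r=\exp\big(\sum_r (-1)^{r-1}p_r t^r/r\big)$ we get $E(t)[x-y]=(1+xt)/(1+yt)$. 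This gives $e_r[x-y]=(x-y)(-y)^{r-1}$ for $r\ge 1$.

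For the vanishing on non-hooks, I will use the multiplicative structure rather than a determinant expansion. Consider the Cauchy identity
\begin{equation*}
\sum_\lambda s_\lambda(X)s_\lambda(Z)=\prod_{j}H(z_j)(X),
\end{equation*}
valid with $Z=(z_1,\dots,z_m)$ a genuine alphabet. After evaluation at $X=x-y$, the right-hand side becomes $\prod_j (1-yz_j)/(1-xz_j)$. This is the generating function of the supersymmetric (hook) Schur functions in one even variable $x$ and one odd variable $y$. It is classical (Berele--Regev) that the super-Schur function $s_\lambda(x/y)$ vanishes unless $\lambda$ fits in the $(1,1)$-hook, i.e. $\lambda_2\le1$, which is exactly the condition that $\lambda$ is a hook.

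To keep the proof self-contained, I would argue directly from Jacobi--Trudi, $s_\lambda=\det(h_{\lambda_i-i+j})$. If $\lambda_2\ge 2$, the first two rows of the matrix have all indices $\ge1$, except possibly for entries with index $\le 0$ sitting to the left of the diagonal. I will check that rows $1$ and $2$ evaluate to $(x-y)x^{\lambda_1-2+j}$ and $(x-y)x^{\lambda_2-3+j}$, each defined when its index is $\ge1$. The indices in row $2$ start at $\lambda_2-1\ge1$, so all entries are of that form. Consequently the two rows are proportional, and the determinant vanishes. This proportionality step is the point that needs care; I expect it to be the main (though mild) obstacle.

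For the hook $\lambda=(a,1^b)$, I will use the standard expansion
\begin{equation*}
s_{(a,1^b)}=\sum_{i=0}^{b}(-1)^i h_{a+i}e_{b-i}.
\end{equation*}
This follows from $\sum_{k} (-1)^k h_{n-k}e_k=0$ by induction, or from Pieri's rule $h_ae_b=s_{(a,1^b)}+s_{(a+1,1^{b-1})}$. Evaluating, for $b=0$ we get $(x-y)x^{a-1}$ directly. For $b\ge1$ it is cleaner to solve the Pieri recursion. Writing $u_{a,b}=s_{(a,1^b)}[x-y]$, we have
\begin{equation*}
u_{a,b}+u_{a+1,b-1}=(x-y)^2x^{a-1}(-y)^{b-1}.
\end{equation*}
The proposed formula satisfies this, since $(x-y)x^{a-1}(-y)^b+(x-y)x^a(-y)^{b-1}=(x-y)x^{a-1}(-y)^{b-1}(x-y)$. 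It also agrees at $b=0$, so induction on $b$ gives the claimed value.

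Finally, for $\lambda'$: $\omega$ swaps $h_r$ and $e_r$, and on the evaluation side this corresponds to the substitution $(x,y)\mapsto(-y,-x)$ up to the sign $(-1)^{|\lambda|}$. Concretely, $p_r[x-y]\mapsto(-1)^{r-1}(x^r-y^r)=(-1)^r\big((-y)^r-(-x)^r\big)$. Hence $s_{\lambda'}[x-y]=(-1)^{|\lambda|}s_\lambda[-y-(-x)]$. Since the conjugate of a non-hook is a non-hook, the vanishing for $s_{\lambda'}$ follows immediately.
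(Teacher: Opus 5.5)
Your proof is correct. For non-hooks it is the paper's argument; for hooks it takes a different route.

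\textbf{Non-hooks.} The paper also uses Jacobi--Trudi with $h_r[x-y]=(x-y)x^{r-1}$. It first rescales by homogeneity, $s_\lambda[x-y]=x^{|\lambda|}s_\lambda[1-y/x]$, so the first two rows become literally $(X,\dots,X)$ with $X=1-y/x$. Your observation that the two rows differ by the factor $x^{\lambda_1-\lambda_2+1}$ is the same point, without denominators. Your hedge about entries of index $\le 0$ is unnecessary: every index in rows $1$ and $2$ is $\ge 1$.

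\textbf{Hooks.} The paper stays inside Jacobi--Trudi. It expands the $(b+1)\times(b+1)$ determinant along the first column, getting $\mathsf D_b=(X-1)\mathsf D_{b-1}$ with $\mathsf D_0=X$. You instead bring in a second generating function to get $e_r[x-y]=(x-y)(-y)^{r-1}$. You then solve the Pieri recursion $u_{a,b}+u_{a+1,b-1}=h_a[x-y]\,e_b[x-y]$ by induction on $b$. The paper's version needs only the $h_r$ evaluation and one determinant. Yours needs Pieri's rule and the $E(t)$ computation, but it shows why the answer has the form $(x-y)x^{a-1}(-y)^b$: it is the unique solution of that recursion with boundary values $h_a[x-y]$.

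\textbf{A slip in your last paragraph.} In fact $(-1)^{r-1}(x^r-y^r)=(-y)^r-(-x)^r$ exactly. So $s_{\lambda'}[x-y]=s_\lambda[(-y)-(-x)]$, with no factor $(-1)^{|\lambda|}$. Your version already fails for $\lambda=(1)$, where it would give $x-y=-(x-y)$. This does not affect the lemma. The vanishing of $s_{\lambda'}[x-y]$ follows by applying the non-hook case to $\lambda'$, which is also a non-hook, so the $\omega$ discussion can simply be dropped.
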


\begin{proof}
    We use the idea in \cite[Ch.I, \S5, Example 10]{macdonald1995symmetric}. By Jacobi--Trudi formula we have
    \begin{equation*}
        s_\lambda = \det(h_{\lambda_i-i+j})_{1 \leq i,j \leq n}.
    \end{equation*}
    By \cite[Ch.~I, \S3, Example~23]{macdonald1995symmetric} and \cite[Ch.~I, \S2, Equation~2.6]{macdonald1995symmetric} we find that
    \begin{equation*}
        \sum_{r \geq 0} h_r[x-y]t^r = \frac{1-yt}{1-xt},
    \end{equation*}
   which implies that $h_0[x-y] = 1$ and 
    \begin{equation}\label{eq:h_r_expression}
        h_r[x-y] = x^{r-1}(x-y),  \quad (r \geq 1).
    \end{equation}
    Equation~\eqref{eq:h_r_expression} implies $h_r[1-t] = 1-t$ for $r \geq 1$. 
    Moreover, by homogeneity of $s_\lambda$,  we have
    \begin{equation*}
        s_\lambda[x-y] = x^{|\lambda|} s_\lambda[1-y/x],
    \end{equation*}
    and so it suffices to consider the specialization $h_r \mapsto X$ for $r \geq 1$ where $X = 1-y/x$. 
        If $\lambda$ is not a hook then $\lambda_1 \geq \lambda_2 \geq 2$, so
    \begin{equation*}
        \lambda_1-1+j \geq 1, \quad \lambda_2-2+j \geq 1.
    \end{equation*}
    In particular, the first two rows of the Jacobi--Trudi matrix are $(X,\ldots,X)$, and we conclude that $s_\lambda[x-y] = 0$. Otherwise, let $\mathsf D_b$ be the determinant of the $(b+1)\times(b+1)$ matrix
    \begin{equation*}
        \mathsf D_b = \det \begin{pmatrix}
            X & X & X &\cdots & X \\
            1 & X & X &\cdots & X \\
            0 & 1 & X &\cdots& X \\
            \vdots & \vdots & \vdots & \ddots & \vdots \\
            0 & 0 & 0 & \cdots & X
        \end{pmatrix} .
    \end{equation*}
    Expanding $\mathsf D_b$ along the first column gives
    \begin{equation*}
        \mathsf D_b = X\mathsf D_{b-1} - \mathsf D_{b-1} = (X-1)\mathsf D_{b-1},
    \end{equation*}
    and $\mathsf D_0 = X$. Hence $\mathsf D_b = X(X-1)^b$ and we conclude
    \begin{equation*}
        s_{(a,1^b)}[x-y] = x^{a+b}s_{(a,1^b)}\left[1-\frac{y}{x}\right] = x^{a+b}\left(1- \frac{y}{x}\right)\left(-\frac{y}{x}\right)^{b} = (x-y)x^{a-1}(-y)^{b}.
    \end{equation*} 
\end{proof}

\begin{lem}\label{lem:series_LHS}
    We have
    \begin{equation*}
        \mathbf{Exp}\left(z^{-1}\mathbf{Log}\left(z + \sum_{r \geq 0} z^{r} h_{2r}\right)-1-h_2\right)[t-t^{-1}] = 1-zt^2.
    \end{equation*}
\end{lem}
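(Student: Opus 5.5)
Since plethystic evaluation at the virtual alphabet $t-t^{-1}$ is a ring homomorphism $\Lambda\to\mathbb{Z}[t,t^{-1}]$ that commutes with the Adams operations, it commutes with $\mathbf{Exp}$ and $\mathbf{Log}$. The plan is therefore to push the evaluation $[t-t^{-1}]$ inside and compute each layer in turn, with $z$ treated as a scalar variable (the Adams operations act on $z$ by $z\mapsto z^k$). Everything is then reduced to computing $h_{2r}[t-t^{-1}]$, which is given by Lemma~\ref{lem:non_hook_vanish} (equivalently equation~\eqref{eq:h_r_expression}) with $x=t$, $y=t^{-1}$:
\[
h_0[t-t^{-1}]=1,\qquad h_{2r}[t-t^{-1}]=t^{2r-1}(t-t^{-1})=t^{2r}-t^{2r-2}\quad (r\ge1).
\]

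First we evaluate the inner series. We have
\[
z+\sum_{r\ge0}z^r h_{2r}[t-t^{-1}] = 1+z+\sum_{r\ge1}z^r(t^{2r}-t^{2r-2}).
\]
Setting $w=zt^2$, this becomes
\[
1+z+\frac{w}{1-w}-\frac{z}{1-w}=\frac{(1-z)}{1-w}+z=\frac{1-zw}{1-w}.
\]
We should double-check this simplification: $1+z-\frac{z-w}{1-w}=\frac{(1+z)(1-w)-z+w}{1-w}=\frac{1-zw}{1-w}$. So the inner series is $\frac{1-z^2t^2}{1-zt^2}$.

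Next we take the plethystic logarithm. Here $z$ and $t$ are monomial variables for the Adams operations, since $p_k[t-t^{-1}]=t^k-t^{-k}$ is compatible with $\psi^k$ acting as $t\mapsto t^k$ after evaluation. We use $\mathbf{Exp}(m)=(1-m)^{-1}$ for a monomial $m$ and the multiplicativity of $\mathbf{Exp}$, so that $\mathbf{Log}\frac{1-zw}{1-w}=w-zw=zt^2-z^2t^2$, i.e.\ $\mathbf{Log}(1-m)=-m$. Dividing by $z$ gives $t^2-zt^2$. Subtracting $1+h_2[t-t^{-1}]=1+t^2-1=t^2$ leaves $-zt^2$. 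Finally, $\mathbf{Exp}(-zt^2)=1-zt^2$.

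The main point requiring care is justifying that evaluation commutes with $\mathbf{Exp}$ and $\mathbf{Log}$, and that in the target ring $z$ and $t$ behave as ``monomials'' (Adams operations send $z\mapsto z^k$, $t\mapsto t^k$, including for the negative power $t^{-1}$). This follows because $p_k[t-t^{-1}]=t^k-t^{-k}$ equals the $k$-th Adams operation applied to $p_1[t-t^{-1}]$. We should also check that the relevant power series converge $z$-adically, which holds since each term has positive $z$-degree except the constant $1$, which is handled by the subtraction of $1$ before applying $\mathbf{Exp}$.
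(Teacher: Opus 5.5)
Your proposal is correct and follows the paper's proof essentially line for line. Both arguments compute $h_{2r}[t-t^{-1}]=t^{2r}-t^{2r-2}$, sum the inner series to $(1-z^2t^2)/(1-zt^2)$, take $\mathbf{Log}$ using $\mathbf{Exp}(m)=(1-m)^{-1}$ for monomials, and cancel against $1+h_2[t-t^{-1}]=t^2$ to reach $\mathbf{Exp}(-zt^2)=1-zt^2$.

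One small imprecision in your closing convergence remark: the $z^0$ part of $z^{-1}\mathbf{Log}(\cdots)$ is $1+h_2$, not just $1$. It is subtracting all of $1+h_2$ that gives the argument of $\mathbf{Exp}$ positive $z$-degree, a point the paper leaves implicit anyway.
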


\begin{proof}
    Note that plethystic evaluation commutes with $\mathbf{Exp}$ and $\mathbf{Log}$. 
    We have $h_0[t-t^{-1}] = 1$ and, applying equation~\eqref{eq:h_r_expression}, we deduce
    \begin{equation*}
       h_{2r}[t-t^{-1}] = t^{2r} - t^{2r-2}, \quad (r \geq 1).
    \end{equation*}
    Thus 
    \begin{equation*}
        z + \sum_{r \geq 0} z^r h_{2r}[t-t^{-1}] =1 + z + \sum_{r \geq 1} z^r(t^{2r}-t^{2r-2}) = \frac{1-z^2t^2}{1-zt^2}.
    \end{equation*}
    For a monomial $x$, we compute
    \begin{equation*}
        \mathbf{Exp}(x) = \exp\left(\sum_{k \geq 1} \frac{x^k}{k}\right) = \exp(-\log(1-x)) = \frac{1}{1-x},
    \end{equation*}
    and so, since $\mathbf{Exp}$ is additive,
    \begin{equation*}
        \mathbf{Exp}(a-b) = \mathbf{Exp}(a)\mathbf{Exp}(-b) = \frac{1-b}{1-a}.
    \end{equation*}
    In particular, we have
    \begin{equation*}
        \mathbf{Log}\left(\frac{1-z^2t^2}{1-zt^2}\right) = zt^2-z^2t^2.
    \end{equation*}
    Since $h_2[t-t^{-1}] = t^2-1$, the left-hand side simplifies to
    \begin{equation*}
        \mathbf{Exp}(-zt^2) = 1-zt^2.
    \end{equation*}
\end{proof}

\begin{cor}\label{cor:series_evaluation}
    For $\Delta \in \mathbb{Z}_{\geq 0}$, we have
    \begin{equation*}
         \sum_{\ell \geq 1}  (-1)^\ell  \sum_{k\geq 0}(-z)^k \dim_k  H_k(\mathcal{H}^{1,0}_\infty, \mathbb{V}_{(\ell, 1^{\Delta+\ell})}) = \begin{cases} z, & \text{ if } \Delta = 0,\\
         0, & \text{ otherwise}.
         \end{cases}
    \end{equation*}
\end{cor}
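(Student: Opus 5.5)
The plan is to specialize the identity of Theorem~\ref{thm:poincare_series} along the plethystic evaluation $f \mapsto f[t-t^{-1}]$. Lemma~\ref{lem:series_LHS} already computes the right-hand side as $1-zt^2$, so all the work is on the left-hand side. Write $P_\lambda(z) := \sum_k \dim H_k(\mathcal{H}^{1,0}_\infty,\mathbb{V}_\lambda)(-z)^k$.

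First I evaluate each $s_{\lambda'}[t-t^{-1}]$ using Lemma~\ref{lem:non_hook_vanish} with $x=t$, $y=t^{-1}$. If $\lambda$ is not a hook, then neither is $\lambda'$, so these terms vanish. The empty partition gives $P_\emptyset(z)=1-z$: by the proof of Lemma~\ref{lem:trivial_rep_contribution}, $H_0$ and $H_1$ are one-dimensional and all higher groups vanish. For a hook $\lambda=(a,1^b)$ with $a\ge 1$ we have $\lambda'=(b+1,1^{a-1})$, hence
\begin{equation*}
s_{\lambda'}[t-t^{-1}] = (t-t^{-1})\,t^{b}\,(-t^{-1})^{a-1} = (-1)^{a-1}(t-t^{-1})\,t^{\,b-a+1}.
\end{equation*}
Theorem~\ref{thm:vanishing_lambda1} kills every hook with $a > (a+b)/2$, i.e.\ $a>b$. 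So only $\Delta := b-a \ge 0$ survives, and we may write $\lambda=(\ell,1^{\Delta+\ell})$.

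Grouping by $\Delta$, the specialized identity becomes
\begin{equation*}
1-z + (t-t^{-1})\sum_{\Delta\ge 0} t^{\Delta+1} \sum_{\ell\ge 1}(-1)^{\ell-1}P_{(\ell,1^{\Delta+\ell})}(z) = 1-zt^2 .
\end{equation*}
Since $z-zt^2 = -zt(t-t^{-1})$, dividing by $t-t^{-1}$ (legitimate in the ring of Laurent series in $t$ over $\mathbb{Z}[[z]]$) gives $\sum_{\Delta\ge0} t^{\Delta+1} G_\Delta(z) = -zt$, where $G_\Delta := \sum_{\ell}(-1)^{\ell-1}P_{(\ell,1^{\Delta+\ell})}$. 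Comparing coefficients of $t^{\Delta+1}$ yields $G_0=-z$ and $G_\Delta=0$ for $\Delta>0$. This is exactly the claim after multiplying by $-1$.

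The main obstacle is justifying these manipulations as identities of formal series rather than formal rearrangements of possibly divergent sums. For fixed $\Delta$ and fixed power $z^k$, I need only finitely many $\ell$ to contribute. This follows from Theorem~\ref{thm:vanishing_ell_lambda}: $\ell(\lambda)=\Delta+\ell+1$, so $H_k$ vanishes once $\ell > 2k-\Delta-1$. The same bound shows that, for each monomial $z^k t^j$, only finitely many partitions contribute to the specialized left-hand side. Hence evaluation at $[t-t^{-1}]$ is compatible with the infinite sum over $\lambda$, and coefficient extraction is valid. I would also double-check the sign and exponent bookkeeping in $s_{\lambda'}$, since an off-by-one there would shift which $\Delta$ carries the term $z$.
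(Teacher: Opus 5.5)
Your proof is correct and follows essentially the paper's argument: you specialize the identity of Theorem~\ref{thm:poincare_series} at $t-t^{-1}$, apply Lemmas~\ref{lem:trivial_rep_contribution}, \ref{lem:non_hook_vanish} and~\ref{lem:series_LHS}, cancel the factor $t-t^{-1}$ and compare coefficients of $t^{\Delta+1}$, and your sign bookkeeping for $s_{\lambda'}[t-t^{-1}]$ agrees with the paper's. You also make explicit two points the paper leaves implicit: Theorem~\ref{thm:vanishing_lambda1} removes the hooks with $a>b$, which is why only $\Delta\ge 0$ appears, and Theorem~\ref{thm:vanishing_ell_lambda} makes each $z^k$-coefficient of the hook sum finite, which justifies the coefficient extraction.
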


\begin{proof}
    Evaluating equation~\ref{eq:poincare_series} at $t-t^{-1}$ and applying Lemmas~\ref{lem:trivial_rep_contribution},~\ref{lem:non_hook_vanish}, and ~\ref{lem:series_LHS} yields:
    \begin{equation*}
        (1-z)+(t^{-1}-t)\sum_{\Delta \geq 0}\sum_{\ell \geq 1} (-1)^{\ell} \sum_{k\geq 0} (-z)^k \dim_k H_k(\mathcal{H}_{\infty}^{1,0}, \mathbb{V}_{(\ell,1^{\Delta+\ell})})t^{\Delta+1}= 1-zt^2
    \end{equation*}
    since the conjugate of $(a,1^b)$ is $ (b+1,1^{a-1})$ and the trivial representation contributes a term $1-z$. Therefore
    \begin{equation*}
        \sum_{\Delta \geq 0}\sum_{\ell \geq 1} (-1)^{\ell} \sum_{k \geq 0} (-z)^k \dim_k H_k(\mathcal{H}_{\infty}^{1,0}, \mathbb{V}_{(\ell,1^{\Delta+\ell})})t^{\Delta+1} = zt.
    \end{equation*}
\end{proof}

\begin{lem}\label{lem:necklace}
    We have an identity
    \begin{multline*}
        \mathbf{Exp}\left(z^{-1}\mathbf{Log}\left(z + \sum_{r \geq 0} z^{r} h_{2r}\right)-1-h_2\right)[t-(-t)] \\
        = (1-t^2)^2(1-z)\prod_{\substack{n=1 \\ 2 \nmid n}}^\infty\left(1+ \frac{2z^nt^{2n}}{(1+z^n)(1-z^nt^{2n})}\right)^{i_n(z^{-1})}.
    \end{multline*}
    where $i_n(t) = \frac{1}{n}\sum_{d \mid n} \mu(n/d)t^d$ is the $n$-th necklace polynomial.
\end{lem}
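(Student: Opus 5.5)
The plan is to compute everything on the power-sum side, since that is where plethystic evaluation is transparent. Under the specialisation $[t-(-t)]$ we have $p_r \mapsto t^r-(-t)^r$, which equals $2t^r$ for $r$ odd and $0$ for $r$ even. The first step is to record the images of the complete symmetric functions. From
\[
\sum_{r\ge 0} h_r u^r=\exp\Bigl(\sum_r p_r u^r/r\Bigr)
\]
one gets
\[
\sum_r h_r[t-(-t)]u^r=\frac{1+tu}{1-tu},
\]
so $h_0\mapsto 1$ and $h_r\mapsto 2t^r$ for $r\ge 1$. Consequently the inner argument evaluates to
\[
A(z,t):=z+\sum_{r\ge0}z^rh_{2r}=(1+z)\Bigl(1+\frac{2zt^2}{(1+z)(1-zt^2)}\Bigr).
\]
The subtlety, and the reason the answer is not simply $\mathbf{Exp}(\cdots)$ of a monomial expression as in Lemma~\ref{lem:series_LHS}, is that the Adams operation $\psi_k$ does not act on the evaluated series by $t\mapsto t^k$ alone. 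Since $\psi_k$ commutes with evaluation, $\psi_k f$ evaluated at $t-(-t)$ equals $f$ evaluated at $t^k-(-t)^k$, with $z\mapsto z^k$. For $k$ odd this is $A(z^k,t^k)$. For $k$ even the alphabet collapses to $0$, so $h_{2r}\mapsto 0$ for $r\ge1$ and $\psi_k A\mapsto 1+z^k$.

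Next I will write out
\[
\mathbf{Log}(A)=\sum_{k\ge1}\frac{\mu(k)}{k}\log\psi_k A
\quad\text{and}\quad
\log\mathbf{Exp}(G)=\sum_{m\ge1}\frac{\psi_m G}{m},
\]
with $G=z^{-1}\mathbf{Log}(A)-1-h_2$. Substituting gives a double sum over $(m,k)$ of terms $\frac{\mu(k)}{mk}\,z^{-m}\log\psi_{mk}A$, and I will regroup it by $n=mk$. This splits the logarithm of the left-hand side into three parts:
\begin{itemize}
\item the factor $(1+z)$, present for every $n$;
\item the factor $B(z,t)=1+\frac{2zt^2}{(1+z)(1-zt^2)}$, present only when $n$ is odd;
\item the constant terms coming from $-1-h_2$.
\end{itemize}
For odd $n$ the coefficient of $\log B(z^n,t^n)$ is
\[
\sum_{mk=n}\frac{\mu(k)}{n}z^{-m}=i_n(z^{-1}),
\]
the $n$-th necklace polynomial. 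Because every divisor of an odd $n$ is odd, the parity restriction is respected automatically. This produces the product $\prod_{n\text{ odd}}B(z^n,t^n)^{i_n(z^{-1})}$ on the right-hand side.

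The remaining step is to check that the other two pieces together give $(1-t^2)^2(1-z)$. The $(1+z)$ contribution is $\sum_n \frac{1}{n}\sum_{d\mid n}\mu(n/d)z^{-d}\log(1+z^n)$, which I expect to reduce to $\mathbf{Exp}$ of $z^{-1}\mathbf{Log}(1+z)$, a series in $z$ alone. Together with the $-1$ term, which gives $\mathbf{Exp}(-1)$, this should collapse to the factor $(1-z)$; I will compare it with the trivial-representation computation in Lemma~\ref{lem:series_LHS}, where the same pieces occur. The $-h_2$ term evaluates to $\mathbf{Exp}(-h_2)=\exp\bigl(-\sum_m \psi_m h_2/m\bigr)$. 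Here $\psi_m h_2[t-(-t)]=h_2[t^m-(-t)^m]$, which is $2t^{2m}$ for $m$ odd and $0$ for $m$ even. This gives
\[
\exp\bigl(-2\operatorname{artanh}(t^2)\bigr)=\frac{1-t^2}{1+t^2}.
\]
That does not match the claimed $(1-t^2)^2$ by itself, so the discrepancy must be absorbed elsewhere. The likely sources are the $t$-dependent part of the odd-$n$ factors, for instance a boundary term at $n=1$ or a normalisation of $B$, or the exact form of $\mathbf{Log}$ in \cite[2.3.9]{bergström2024hyperellipticcurvesscanningmap}.

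I expect this final bookkeeping to be the main obstacle: reconciling the constant prefactors while keeping careful track of which Adams operations see the even-degree collapse. I would first verify the identity numerically to low order in $t$ and $z$, which pins down the correct grouping of factors. After that I would do the regrouping rigorously using the Möbius identity $\sum_{d\mid n}\mu(d)=\delta_{n,1}$.
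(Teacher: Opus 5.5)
Your route is the paper's route. Your regrouping of $\sum_{m,k}\frac{\mu(k)}{mk}z^{-m}\log\psi_{mk}A$ by $n=mk$ is exactly the product formula the paper imports from Bergstr\"om--Diaconu--Petersen--Westerland, Proposition 7.0.17. Every value you compute is correct, including the one you distrust.

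The gap is your last step. There is nothing ``elsewhere'' to absorb the mismatch, because you have already evaluated every other factor exactly. The $(1+z)$ pieces give $\mathbf{Exp}\bigl(z^{-1}\mathbf{Log}(1+z)-1\bigr)=\mathbf{Exp}(-z)=1-z$, since $\mathbf{Log}(1+z)=z-z^2$. The odd-$n$ factors are literally those in the statement. So what your argument proves is
\[
\mathbf{Exp}\Bigl(z^{-1}\mathbf{Log}\bigl(z+\sum_{r\ge0}z^rh_{2r}\bigr)-1-h_2\Bigr)[t-(-t)]
=\frac{1-t^2}{1+t^2}\,(1-z)\prod_{\substack{n\ge1\\ 2\nmid n}}\Bigl(1+\frac{2z^nt^{2n}}{(1+z^n)(1-z^nt^{2n})}\Bigr)^{i_n(z^{-1})}.
\]
This differs from the stated right-hand side by the factor $1-t^4$. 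No boundary term at $n=1$, renormalisation of $B$, or variant of $\mathbf{Log}$ can repair this, because the statement as written is false.

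You can confirm this without any Adams-operation bookkeeping by reducing modulo $z$:
\begin{itemize}
\item On the left, $z^{-1}\mathbf{Log}(A)=1+h_2+O(z)$, so the argument of $\mathbf{Exp}$ is $O(z)$ and the left side is $\equiv1\pmod{z}$.
\item On the right, $i_n(z^{-1})\log\bigl(1+\frac{2z^nt^{2n}}{(1+z^n)(1-z^nt^{2n})}\bigr)\equiv 2t^{2n}/n\pmod{z}$, so the product over odd $n$ is $\equiv\frac{1+t^2}{1-t^2}$.
\item Your prefactor then gives $1$, while $(1-t^2)^2$ gives $1-t^4$.
\end{itemize}

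The error in the paper's proof sits exactly where you flagged it. It writes $\mathbf{Exp}(-h_2)[t-(-t)]=\mathbf{Exp}(-2t^2)=(1-t^2)^2$, letting $\psi_m$ act on $2t^2$ by $t\mapsto t^m$ for every $m$. This contradicts the rule $\psi_n(h_{2k})[t-(-t)]=h_{2k}[t^n-(-t)^n]$ used in the next line, which vanishes for even $n$. In general $\mathbf{Exp}(-h_2)[x-y]=\frac{1-x^2}{1-xy}$, so your $\frac{1-t^2}{1+t^2}$ is right.

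So rather than hunting for a compensating term, you should state and prove the corrected identity above. The correction is harmless downstream. In Corollary~\ref{cor:series_bound} the factor $\frac{1-t^2}{1+t^2}$ is holomorphic and bounded by $3$ on $|t|\le2^{-1/2}$, so Cauchy's estimate still gives $\ll2^{n/2}$. Only the remark that the relevant coefficient is a fixed combination of the $t^n,t^{n-2},t^{n-4}$ coefficients needs changing. It becomes a combination over all $t^{n-2j}$ with coefficients $1$ and $\pm2$, which is still $\ll 2^{n/2}$.
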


\begin{proof}
    By \cite[Proposition 7.0.17]{bergström2024hyperellipticcurvesscanningmap} and additivity of $\mathbf{Exp}$ we have 
    \begin{multline*}
        \mathbf{Exp}\left(z^{-1}\mathbf{Log}\left(z + \sum_{r \geq 0} z^{r} h_{2r}\right)-1-h_2\right) \\
        = \mathbf{Exp}(-h_2)(1-z)\prod_{n=1}^\infty \left(1 + \frac{1}{1+z^n}\sum_{k > 0} z^{nk} \psi_n(h_{2k})\right)^{i_n(z^{-1})},
    \end{multline*}
    where $\psi_n$ is the Adams operation. Evaluating at $t-(-t)$ yields
    \begin{equation*}
        \mathbf{Exp}(-h_2)[t-(-t)] = \mathbf{Exp}(-2t^2) = (1-t^2)^2,
    \end{equation*}
    and
    \begin{equation*}
        \psi_n(h_{2k})[t-(-t)] = h_{2k}[t^n-(-t)^n] = \begin{cases}
            2t^{2nk} & \text{ if } 2 \nmid n, \\
            0 & \text{ if }2 \mid n.
        \end{cases}
    \end{equation*}
    The result follows from evaluating the geometric series.
\end{proof}

\begin{cor}\label{cor:series_bound}
    For $q > 16$ we have 
   \begin{equation*}
       \sum_{\ell\geq 0} \sum_{k \geq 0}   \dim H_k(\mathcal{H}_\infty^{1,0} \otimes \overline{\mathbb{F}}_q, \mathbb{V}_{(n-\ell,1^\ell)}) q^{-k/2} \ll 2^{n/2},
   \end{equation*}
   where the implied constant is absolute. 
\end{cor}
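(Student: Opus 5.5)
We bound the hook sum through the generating function of Lemma~\ref{lem:necklace}. Setting $z = -q^{-1/2}$ in Theorem~\ref{thm:poincare_series} turns $(-z)^k$ into $q^{-k/2}$. By Theorem~\ref{thm:purity} the stable homology is pure Tate of weight $-2k$, so Frobenius acts on $H_k$ by $q^{-k}$ times a scalar of absolute value one. However, we only need dimensions weighted by $q^{-k/2}$, so purity serves just to explain why $z=\pm q^{-1/2}$ is the relevant specialization. The quantity we want is a sum of nonnegative coefficients, and the difficulty is that evaluating at $t-t^{-1}$ (Corollary~\ref{cor:series_evaluation}) introduces signs $(-1)^\ell$ and cancellation. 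This is why we use the alphabet $t-(-t)$ instead. By Lemma~\ref{lem:non_hook_vanish} with $x=t$, $y=-t$, we get $s_{(a,1^b)}[t+t] = 2t\cdot t^{a-1} t^{b} = 2t^{a+b}$, which is positive for every hook of size $n$, and non-hooks vanish. Hence the coefficient of $t^n$ in the left side of \eqref{eq:poincare_series} evaluated at $t-(-t)$ equals
\[
2\sum_{\ell}\sum_k \dim H_k(\mathcal H^{1,0}_\infty,\mathbb V_{\lambda})(-z)^k,
\]
summed over hooks $\lambda$ with $|\lambda|=n$ (the conjugation $\lambda\mapsto\lambda'$ permutes hooks of size $n$).

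To keep all coefficients nonnegative, we replace $z$ by $-|z|$ formally. Concretely, we set $w=q^{-1/2}$ and $z=-w$, so that $(-z)^k=w^k\ge 0$. The target then becomes $[t^n]$ of
\[
F(t)=(1-t^2)^2(1+w)\prod_{n\text{ odd}}\Bigl(1-\frac{2w^nt^{2n}}{(1-w^n)(1+w^nt^{2n})}\Bigr)^{i_n(-w^{-1})},
\]
and this coefficient is a sum of nonnegative terms. It therefore suffices to show that $F$ is analytic in a disc $|t|<R$ with $R>2^{-1/2}$, with bounds uniform in $q>16$. Cauchy's estimate on a circle of radius $r\in(2^{-1/2},R)$ then gives $[t^n]F\ll r^{-n}\le 2^{n/2}$, with an absolute constant once $r$ is fixed, say $r=2^{-1/2}$ exactly.

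For analyticity, write $i_n(-w^{-1}) = \pm\frac1n w^{-n}+O(w^{-n/2}/n\cdot \text{small})$, so $|i_n(-w^{-1})|\le \frac{2}{n}w^{-n}$. Each factor is $1+\epsilon_n$ with $|\epsilon_n|\ll w^n|t|^{2n}$ when $w^n|t|^{2n}<1/2$. Taking logarithms, the exponent contributes $|i_n|\cdot|\log(1+\epsilon_n)|\ll \frac{2}{n}w^{-n}\cdot w^n|t|^{2n}=\frac{2}{n}|t|^{2n}$. The product is therefore controlled by $\exp\bigl(\sum_n \frac{c}{n}|t|^{2n}\bigr)\approx (1-|t|^2)^{-c}$, which is analytic for $|t|<1$. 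More carefully, the exact factor is $\bigl(1+\text{stuff}\bigr)^{i_n}$ with singularities only where $w^nt^{2n}=-1$ or at the zero of the base, which lies at $|t|^{2n}\approx w^{-n}/3$, i.e.\ $|t|\approx q^{1/4}$. So $F$ is holomorphic in $|t|<1$ with $\log|F|\le C\sum_n |t|^{2n}/n$ uniformly in $q>16$. Choosing $r=2^{-1/2}$ (or any $r<1$) gives the bound, and in fact something better than $2^{n/2}$. The condition $q>16$ ensures $w\le 1/4$, so the binomial-type expansions converge and the constants are absolute.

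The main obstacle is justifying the sign bookkeeping: one must check that the coefficients of the series at $t-(-t)$ with $z=-w$ are exactly the nonnegative dimension sums, including how the $\mathbf{Exp}(-h_2)$ and $(1-z)$ prefactors and the trivial representation enter, and that the non-integer power $i_n$ of a series is legitimately expanded. I would first verify these facts on the formal level using Lemma~\ref{lem:non_hook_vanish} and Lemma~\ref{lem:necklace}, and only then do the analytic estimate.
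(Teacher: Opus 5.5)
Your argument follows the paper's proof. You specialize \eqref{eq:poincare_series} at $t-(-t)$ so that every hook of size $n$ contributes $2t^n$, set $z=-q^{-1/2}$, and bound the necklace product on $|t|=2^{-1/2}$ using $|i_n(z^{-1})|\ll q^{n/2}/n$ and $|\epsilon_n|\ll q^{-n/2}|t|^{2n}$. The conclusion then comes from Cauchy's estimate. Your extra remark is also correct: the product is holomorphic on all of $|t|<1$ with $\log|F|\ll\sum_n|t|^{2n}/n$ uniformly in $q>16$, which gives a polynomial-in-$n$ bound rather than $2^{n/2}$.

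However, the bookkeeping check you flagged fails. For the $F$ you wrote down, the claim that $[t^n]F$ is a sum of nonnegative terms is false.

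The prefactor $(1-t^2)^2$, taken from Lemma~\ref{lem:necklace}, is not $\mathbf{Exp}(-h_2)[t-(-t)]$. Since $\psi_m(h_2)=\tfrac12(p_m^2+p_{2m})$ and $p_r\mapsto t^r-(-t)^r$, you get $\psi_m(h_2)[t-(-t)]=\tfrac12\bigl(t^m-(-t)^m\bigr)^2$. This equals $2t^{2m}$ for odd $m$ and $0$ for even $m$. Hence $\mathbf{Exp}(-h_2)[t-(-t)]=\exp\bigl(-\sum_{m\ \mathrm{odd}}2t^{2m}/m\bigr)=\frac{1-t^2}{1+t^2}$. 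The underlying issue is that evaluation at $t-(-t)$ does not commute with Adams operations once $t$ is treated as a line element. It does commute for $t-t^{-1}$, which is why the evaluation at $t-t^{-1}$ is unaffected.

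You can see the failure directly. As $w\to 0$ you have $i_n(-w^{-1})\log(1+\epsilon_n)\to 2t^{2n}/n$ for each odd $n$, so your $F$ tends to $(1-t^2)^2\cdot\frac{1+t^2}{1-t^2}=1-t^4$. Its $t^4$-coefficient is $-1$. But the left side at $z=0$ is exactly $1$, because $H_0(\mathcal H^{1,0}_\infty,\mathbb V_\lambda)=0$ for $\lambda\neq\emptyset$ by Theorem~\ref{thm:vanishing_ell_lambda}. So Lemma~\ref{lem:necklace} as stated is off by a factor $1-t^4$.

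The repair is harmless, both for your argument and for the paper's. With the prefactor $\frac{1-t^2}{1+t^2}$, which is bounded by $\frac{1+r^2}{1-r^2}$ on $|t|=r<1$, the coefficient $[t^n]F$ really is $2\sum_{\ell,k}\dim H_k(\mathcal H^{1,0}_\infty,\mathbb V_{(n-\ell,1^\ell)})\,q^{-k/2}\ge 0$ for $n\ge 1$. Your Cauchy estimate on $|t|=2^{-1/2}$, or on any circle of radius $r<1$, then goes through unchanged.
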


\begin{proof}
    Evaluating equation \eqref{eq:poincare_series} at $t - (-t)$ and applying  Lemmas~\ref{lem:trivial_rep_contribution},~\ref{lem:non_hook_vanish}, and \ref{lem:necklace} yields:
    \begin{multline*}
        (1-z) + 2\sum_{n \geq 1} \sum_{\ell =0}^{n-1} \sum_{k\geq0} \dim H_k(\mathcal{H}_\infty^{1,0}, \mathbb{V}_{(n-\ell,1^\ell)})(-z)^k t^{n} \\
        = (1-t^2)^2(1-z)\prod_{\substack{n=1 \\ 2 \nmid n}}^\infty\left(1+ \frac{2z^nt^{2n}}{(1+z^n)(1-z^nt^{2n})}\right)^{i_n(z^{-1})},
    \end{multline*}
    where we have used that the conjugate of $(n-\ell,1^\ell)$ is $(\ell +1, 1^{n-\ell-1})$. 
    We seek the coefficient of $t^n$ evaluated at $z = -q^{-1/2}$. 
    If $q > 16$ and $|t| = 2^{-1/2}$, for $n$ odd we have
    \begin{equation*}
        \frac{2z^nt^{2n}}{(1+z^n)(1-z^nt^{2n})} \leq \frac{2q^{-n/2}2^{-n}}{(1-q^{-n/2})(1-q^{-n/2}|t|^{2n})} \leq 2\left(1-\frac{1}{4}\right)^{-1}\left(1-\frac{1}{8}\right)^{-1} q^{-n/2}2^{-n}.
    \end{equation*}
    Now we bound for $q > 16$
    \begin{equation*}
        i_n(z^{-1}) = \frac{1}{n}\sum_{d \mid n}\mu(n/d)q^{d/2} \leq \frac{4}{3} \frac{q^{n/2}}{n}.
    \end{equation*}
   Therefore
    \begin{align*}
        \sum_{\substack{n \geq 1 \\ 2 \nmid n}} \left|i_n(z^{-1}) \log\left(1+ \frac{2z^nt^{2n}}{(1+z^n)(1-z^nt^{2n})}\right) \right| \ll \sum_{\substack{n \geq 1 \\ 2 \nmid n}} \frac{2^{-n}}{n} \ll 1,
    \end{align*}
    and exponentiation yields a uniform bound
    \begin{equation*}
         \prod_{\substack{n=1 \\ 2 \nmid n}}^\infty\left(1+\frac{2z^nt^{2n}}{(1+z^n)(1-z^nt^{2n})} \right)^{i_n(z^{-1})} \ll 1.
    \end{equation*}  
    By Cauchy's estimate, we conclude that the $t^n$ coefficient of 
    \begin{equation*}
        (1-z)\prod_{\substack{n=1 \\ 2 \nmid n}}^\infty\left(1+ \frac{2z^nt^{2n}}{(1+z^n)(1-z^nt^{2n})}\right)^{i_n(z^{-1})}
    \end{equation*}
    at $z = -q^{-1/2}$ is $\ll 2^{n/2}$. 
    The $t^n$ coefficient of the expression we are interested in will be a fixed linear combination of $t^{n}, t^{n-2}$ and $t^{n-4}$ coefficients of this product. 
\end{proof}

\subsection{Traces of Frobenius class}

For a genus $g$ and a weight $\lambda$, we introduce the notation
\begin{equation*}
    \mathrm{Tr}_\lambda(g) := \frac{1}{q^{2g+1}} \sum_{d \in \mathcal{H}_{2g+1}(\mathbb{F}_q)} s_{\langle \lambda \rangle}(\Theta_d).
\end{equation*}
Using Lemma~\ref{lem:splitting_rule} and the fact that $\#\mathcal{H}_{2g+1}=(q-1)q^{2g}$, we can express
\begin{equation}\label{eq.TrtDn}
    \langle \operatorname{Tr}(\Theta^n_D) \rangle = \frac{q}{q-1}\sum_{\lambda \in P(n)} c_{\lambda,n,g} \mathrm{Tr}_\lambda(g),
\end{equation}
for some $ c_{\lambda,n,g}  \in \mathbb{Z}$ and a set $P(n)$ of partitions, potentially including the empty partition, depending only on $n$. 
By the Grothendieck trace formula, equation~\eqref{eq.TrtDn} becomes
\begin{equation}\label{eq:trace_formula}
     \langle \operatorname{Tr}(\Theta^n_D) \rangle = \frac{q}{q-1} \sum_{\lambda \in P(n)} c_{\lambda,n,g}  \sum_{k \geq 0}  (-1)^k \mathrm{Tr}(\mathrm{Frob}_q \mid H_k^{\mathrm{\acute{e}t}}(\mathcal{H}_g^{1,0} \otimes \overline{\mathbb{F}}_q, \mathbb{V}_\lambda)).
\end{equation}

Now we bound the contribution of the hooks of length $n$.
\begin{prop}\label{prop:small_n_contribution}
    If $2 \nmid q$ and $q > 16$, then
    \begin{multline*}
        \sum_{\ell = 0}^{\min(n-1,g-1)} (-1)^\ell \sum_{k \geq 0}(-1)^k  \mathrm{Tr}\left(\mathrm{Frob}_q \mid H_k^{\mathrm{\acute{e}t}}(\mathcal{H}_{g}^{1,0} \otimes \overline{\mathbb{F}}_q, \mathbb{V}_{(n-\ell,1^\ell)})\right) \\
        \ll \mathbbm{1}_{n \leq g}  2^{n/2} q^{-n/8} + g2^{6g+n} q^{-\theta(g)/2},
    \end{multline*}
    where the implied constant is absolute and 
    $\theta(g)=\lfloor 2Ag+A+B\rfloor$ is the stable range of Theorem \ref{thm:stable_range}.

\end{prop}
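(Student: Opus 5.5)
Split the sum over $k$ at the stable range $\theta(g)$. For $k\le \theta(g)$, Theorem~\ref{thm:stable_range} (with $n=2g+1$) together with Theorem~\ref{thm:comparisons} identifies $H_k^{\mathrm{\acute{e}t}}(\mathcal{H}_{g}^{1,0}\otimes\overline{\mathbb{F}}_q,\mathbb{V}_{(n-\ell,1^\ell)})$ with the stable group $H_k^{\mathrm{\acute{e}t}}(\mathcal{H}_{\infty}^{1,0}\otimes\overline{\mathbb{F}}_q,\mathbb{V}_{(n-\ell,1^\ell)})$, Frobenius-equivariantly. By Theorem~\ref{thm:purity} the Frobenius eigenvalues there have absolute value $q^{-k}$, so each trace is bounded by $\dim H_k\cdot q^{-k}$. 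For $k>\theta(g)$ we bound the unstable contribution using Lemma~\ref{lem:mixed_weight}: the trace is at most $\dim H_k\cdot q^{-k/2}$.

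\emph{Stable part.} The summed stable contribution is at most $\sum_\ell\sum_k \dim H_k(\mathcal{H}^{1,0}_\infty,\mathbb{V}_{(n-\ell,1^\ell)})\,q^{-k}$. Two vanishing results prune this sum. By Theorem~\ref{thm:vanishing_lambda1}, only hooks with $n-\ell\le n/2$ survive, i.e.\ $\ell\ge n/2$. By Theorem~\ref{thm:vanishing_ell_lambda}, any $k$ with a nonzero term satisfies $k\ge \ell(\lambda)/2=(\ell+1)/2>n/4$. Write $q^{-k}=q^{-k/2}q^{-k/2}\le q^{-n/8}q^{-k/2}$ and apply Corollary~\ref{cor:series_bound}, which bounds $\sum_{\ell,k}\dim H_k\, q^{-k/2}$ by $\ll 2^{n/2}$. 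This gives $\ll 2^{n/2}q^{-n/8}$.

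The indicator $\mathbbm{1}_{n\le g}$ should be handled as follows. When $n>g$ the sum over $\ell$ stops at $g-1<n/2$ as soon as $n\ge 2g$. So for $n\ge 2g$ the surviving hooks ($\ell\ge n/2$) are absent and the stable term vanishes. For $g<n<2g$ some care is needed; I would aim to absorb this range into the second error term, or check whether the stated indicator is really needed there. This bookkeeping is the main point of uncertainty: Corollary~\ref{cor:series_bound} ranges over all $\ell\le n-1$ while our sum is truncated at $g-1$, so I would simply use the full-range bound as an upper bound, which is harmless.

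\emph{Unstable part.} For $\theta(g)<k\le 2g$ (homology of $\beta_{2g+1}$ vanishes above its cohomological dimension $2g$), combine Lemma~\ref{lem:dim_bound_Braid} and Lemma~\ref{lem:dim_bounds_hook}: $\dim H_k\le\binom{2g}{k}\,2g\,2^{4g+n-\ell-1}$. With $\binom{2g}{k}\le 2^{2g}$ and $q^{-k/2}\le q^{-\theta(g)/2}$, each term is $\le g\,2^{6g+n}q^{-\theta(g)/2}$ up to constants. Summing over $\ell$ and $k$ costs at most factors of $g$ and $2g$. To get exactly the stated $g2^{6g+n}$, I would sum geometrically in $k$ using $q>16$, which gives $\sum_{k>\theta}q^{-k/2}\ll q^{-\theta/2}$. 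The factor $2^{-\ell}$ from $2^{n-\ell}$ sums in $\ell$ to $O(1)$. The binomial sum $\sum_k\binom{2g}{k}=2^{2g}$ can be used in place of $2^{2g}$ per term, with the $q^{-k/2}$ decay handled separately.
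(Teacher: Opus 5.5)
Your decomposition and estimates are the paper's own. The unstable range $k>\theta(g)$ is bounded exactly as you describe: Lemma~\ref{lem:dim_bound_Braid}, Lemma~\ref{lem:dim_bounds_hook}, weights $\le -k$, $\sum_\ell 2^{-\ell-1}\le 1$ and $\sum_k\binom{2g}{k}\le 2^{2g}$. For the stable range you use Theorems~\ref{thm:vanishing_lambda1} and~\ref{thm:vanishing_ell_lambda}, purity, and Corollary~\ref{cor:series_bound}, again as the paper does.

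The gap is the one you flag yourself. Your argument does not produce the indicator $\mathbbm{1}_{n\le g}$ in the range $g<n\le 2g-2$. There the truncation $\ell\le g-1$ still permits hooks with $\ell\ge n/2$, and you leave open whether the stable term can be discarded. As written, the proposal only proves the bound with $2^{n/2}q^{-n/8}$ present for all $n<2g-1$, which is weaker than the statement.

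You already have the inequality needed to close this. Every surviving stable term satisfies $k\ge\lceil(\ell+1)/2\rceil>\ell/2\ge n/4$, while the stable range is $k\le\theta(g)=\lfloor 2Ag+A+B\rfloor$.

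\begin{itemize}
\item If $n>g$, then $k>n/4>g/4$.
\item Since $A<1/8$, we have $\theta(g)<g/4$. Concretely, $A=\tfrac{1}{34}$ and $B=-\tfrac{35}{34}$ give $\theta(g)\le g/17-1$.
\end{itemize}

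So no pair $(\ell,k)$ with $k\le\theta(g)$ survives the vanishing theorems. The stable contribution is therefore exactly zero for every $n>g$, not just for $n\ge 2g-1$. This is how the paper obtains the indicator; the argument in fact works for all $n>8Ag+4A+4B$.

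Your alternative of absorbing this range into the second error term also works, for the same reason. From $n>g>4\theta(g)$ you get $q^{-n/8}\le q^{-\theta(g)/2}$, hence $2^{n/2}q^{-n/8}\le g\,2^{6g+n}q^{-\theta(g)/2}$. Either way, the missing step is comparing the lower bound $k>n/4$ from Theorem~\ref{thm:vanishing_ell_lambda} with the stable cutoff $\theta(g)$, which relies on $A$ being small.
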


\begin{proof}
    Let $L = \min(n-1,g-1)$. Using Lemmas \ref{lem:dim_bound_Braid},~\ref{lem:dim_bounds_hook}, and~\ref{lem:mixed_weight}, we get
    \begin{align*}
        &\sum_{\ell = 0}^{L} \sum_{k > \theta(g)}(-1)^{k+\ell} \mathrm{Tr}(\mathrm{Frob}_q \mid H_k^{\mathrm{\acute{e}t}}(\mathcal{H}_g^{1,0} \otimes \overline{\mathbb{F}}_q, \mathbb{V}_{(n-\ell,1^\ell)}))  \\
        & 
        \leq     \sum_{\ell = 0}^{L}  \sum_{k > \theta(g)} \dim H_k^{\mathrm{\acute{e}t}}(\mathcal{H}_g^{1,0} \otimes \overline{\mathbb{F}}_q, \mathbb{V}_{(n-\ell,1^\ell)})) q^{-k/2} \\
        &\leq \left(\sum_{\ell = 0}^{L} \dim  \mathbb{V}_{(n-\ell,1^\ell)}) \right)\left(  \sum_{k > \theta(g)} {2g \choose k}  q^{-k/2}\right) \\
        &\leq (2g)2^{4g+n}\left(\sum_{\ell = 0}^{L} 2^{-\ell-1}  \right)\left(  \sum_{k > \theta(g)} {2g \choose k}  q^{-k/2}\right) \\
        &\leq (2g)2^{6g+n} q^{-\theta(g)/2}.
    \end{align*}
    The stable range in this case gives a noticeable contribution when $n$ is small, but this decays rapidly with $n$. Note that this corresponds to the arithmetic contribution at zero in the one-level density. By Theorems~\ref{thm:stable_range}, \ref{thm:vanishing_lambda1}, and \ref{thm:vanishing_ell_lambda}, since the finite-genus homology agrees with the stable homology for $k\leq \theta(g)$,
    we have
    \begin{align*}
        &\sum_{\ell = 0}^{L} (-1)^\ell \sum_{k = 0}^{\theta(g)} (-1)^k  \mathrm{Tr}(\mathrm{Frob}_q \mid H_k^{\mathrm{\acute{e}t}}(\mathcal{H}_g^{1,0} \otimes \overline{\mathbb{F}}_q, \mathbb{V}_{(n-\ell,1^\ell)}))\\
        &=\sum_{\ell = 0}^{L} (-1)^\ell \sum_{k = 0}^{\theta(g)} (-1)^k  \mathrm{Tr}(\mathrm{Frob}_q \mid H_k^{\mathrm{\acute{e}t}}(\mathcal{H}_\infty^{1,0} \otimes \overline{\mathbb{F}}_q, \mathbb{V}_{(n-\ell,1^\ell)}))\\
        &=\sum_{\ell = \lceil n/2\rceil}^{L} (-1)^\ell \sum_{k = \lceil (\ell+1)/2\rceil}^{\theta(g)} (-1)^k  \mathrm{Tr}(\mathrm{Frob}_q \mid H_k^{\mathrm{\acute{e}t}}(\mathcal{H}_\infty^{1,0} \otimes \overline{\mathbb{F}}_q, \mathbb{V}_{(n-\ell,1^\ell)})).
    \end{align*}
    If $n > g$ then, assuming we consider a stable range with $A < 1/8$\footnote{In \cite[1.1.15]{bergström2024hyperellipticcurvesscanningmap} it is conjectured that one can take $A = 1/4$, but one can always use smaller $A$.}, we have 
    \begin{equation*}
        k >\frac{\ell}{2} \geq \frac{n}{4} > \frac{g}{4} > \theta(g),
    \end{equation*}
    so we conclude that the stable range vanishes\footnote{This argument works for any 
    $n>8Ag+4A+4B$.}.
    If $n \leq g$ we use Corollary~\ref{cor:series_bound} to conclude
    \begin{align*}
         &\sum_{\ell = \lceil n/2\rceil}^{L} (-1)^\ell \sum_{k = \lceil (\ell+1)/2\rceil}^{\theta(g)} (-1)^k  \mathrm{Tr}(\mathrm{Frob}_q \mid H_k^{\mathrm{\acute{e}t}}(\mathcal{H}_\infty^{1,0} \otimes \overline{\mathbb{F}}_q, \mathbb{V}_{(n-\ell,1^\ell)}))\\
        & \leq  \sum_{\ell = \lceil n/2\rceil}^{L} \sum_{k = \lceil (\ell+1)/2\rceil}^{\theta(g)}  \dim H_k^{\mathrm{\acute{e}t}}(\mathcal{H}_{\infty}^{1,0} \otimes \overline{\mathbb{F}}_q, \mathbb{V}_{(n-\ell,1^\ell)})q^{-k} \\
        &\leq q^{-n/8} \sum_{\ell\geq 0} \sum_{k \geq 0}   \dim H_k^{\mathrm{\acute{e}t}}(\mathcal{H}_\infty^{1,0} \otimes \overline{\mathbb{F}}_q, \mathbb{V}_{(n-\ell,1^\ell)}) q^{-k/2} \\
        & \ll 2^{n/2} q^{-n/8},
    \end{align*}
    for $q > 16$. 
\end{proof}

\begin{prop}\label{prop:correction_term}
    If $g<n \leq 2g$, 
    $q>2^{3/A+2}$, then we have
    \begin{equation*}
        \sum_{a=1}^{n-g-1} (-1)^{n+a} \sum_{k \geq 0} (-1)^k \mathrm{Tr}(\mathrm{Frob}_q \mid H_k^{\mathrm{\acute{e}t}}(\mathcal{H}_{g}^{1,0} \otimes \overline{\mathbb{F}}_q ,\mathbb{V}_{ (a,1^{2g-n+a}) })) = \frac{\mathbbm{1}_{n=2g}}{q} + O\left(g2^{5g+n}q^{-\theta(g)/2}\right).
    \end{equation*}
\end{prop}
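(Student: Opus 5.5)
We follow the template of Proposition~\ref{prop:small_n_contribution}: split each inner sum over $k$ at the stable range $\theta(g)$. The unstable part is bounded crudely. The stable part is then evaluated exactly using Corollary~\ref{cor:series_evaluation}.

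\emph{Unstable part.} For $k>\theta(g)$ we combine Lemma~\ref{lem:mixed_weight} (eigenvalues of absolute value $\leq q^{-k/2}$), Lemma~\ref{lem:dim_bound_Braid} (dimension $\leq \binom{2g}{k}\dim \mathbb{V}_\lambda$) and Lemma~\ref{lem:dim_bounds_hook}. The partition $(a,1^{2g-n+a})$ has first part $a\leq n-g-1$. Summing $2^{4g+a-1}$ over $1\le a\le n-g-1$ gives $\ll 2^{3g+n}$. Summing $\binom{2g}{k}q^{-k/2}$ over $k>\theta(g)$ gives at most $2^{2g}q^{-\theta(g)/2}$. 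Together these yield the error term $O(g2^{5g+n}q^{-\theta(g)/2})$.

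\emph{Stable part.} For $k\le\theta(g)$, Theorem~\ref{thm:stable_range} lets us replace $\mathcal{H}_g^{1,0}$ by $\mathcal{H}_\infty^{1,0}$. By Theorem~\ref{thm:purity}, $\mathrm{Frob}_q$ acts on $H_k$ by $q^{-k}$ times the identity, since the homology is pure Tate of weight $-2k$. We write $\lambda=(a,1^{\Delta+a})$ with $\Delta=2g-n\ge 0$. Theorem~\ref{thm:vanishing_ell_lambda} forces $k\ge \ell(\lambda)/2=(\Delta+a+1)/2$. Hence every $a$ with $a>2\theta(g)$ contributes nothing in the range $k\le\theta(g)$.

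We next check that the truncation $a\le n-g-1$ is harmless. Since $n-g-1=g-\Delta-1$ and $\theta(g)\approx 2Ag$ with $A$ small, all $a\le 2\theta(g)$ satisfy $a\le n-g-1$ once $\Delta$ is not close to $g$. When $\Delta$ is close to $g$, we have $\ell(\lambda)\ge\Delta$, so again $k>\theta(g)$ and everything vanishes. This boundary case needs a short case check, roughly $\Delta\ge 2\theta(g)$ versus not. In either case the stable part equals
\begin{equation*}
(-1)^n\sum_{a\ge1}(-1)^a\sum_{k\le\theta(g)}(-1)^k\dim H_k(\mathcal{H}^{1,0}_\infty,\mathbb{V}_{(a,1^{\Delta+a})})\,q^{-k}.
\end{equation*}

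\emph{Completing and evaluating.} We extend the sum over $k$ to all $k\ge0$. The tail $k>\theta(g)$ of the stable homology has to be bounded by something like $q^{-\theta(g)/2}$ times an exponential in $g$. We would get this from the coefficient bound of Corollary~\ref{cor:series_bound}-type arguments, or from $\dim H_k\le$ coefficients of the generating function at $|z|=q^{-1/2}$; this is where the hypothesis $q>2^{3/A+2}$ enters, so that $q^{-k/2}$ beats the growth of the dimensions. We then apply Corollary~\ref{cor:series_evaluation} with $z=q^{-1}$. This gives $z=q^{-1}$ if $\Delta=0$, i.e.\ $n=2g$, and $0$ otherwise. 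The sign $(-1)^n$ equals $+1$ when $n=2g$, which yields the main term $\mathbbm{1}_{n=2g}/q$.

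\emph{Main obstacle.} The main obstacle is controlling the stable-homology tail for $k>\theta(g)$ needed to apply Corollary~\ref{cor:series_evaluation} exactly. This requires an explicit growth bound on $\sum_a\dim H_k(\mathcal{H}^{1,0}_\infty,\mathbb{V}_{(a,1^{\Delta+a})})$ in $k$. I would first try Cauchy's estimate on the product formula of Lemma~\ref{lem:necklace}, or on Theorem~\ref{thm:poincare_series} evaluated at $t-t^{-1}$ with $|t|$ and $|z|$ slightly off the unit scale.
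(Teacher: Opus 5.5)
Your unstable-range bound is the paper's, and your reduction of the stable part is correct. The key point, which the paper does not exploit, is that for $k\le\theta(g)$ only hooks with $a\le 2k-\Delta-1\le 2\theta(g)-\Delta-1\le n-g-1$ have nonzero stable homology, using $2\theta(g)\le g$. So the cutoff $a\le n-g-1$ can be dropped. As written, however, the proposal is incomplete: you extend the $k$-sum to all $k\ge0$ and leave the resulting tail estimate unproven.

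That step is not actually needed. Corollary~\ref{cor:series_evaluation} is an identity of formal power series in $z$. By Theorem~\ref{thm:vanishing_ell_lambda}, each $z^k$-coefficient is a finite sum, since only $\ell\le 2k-\Delta-1$ contribute. The corollary is therefore equivalent to the family of exact identities
\[
\sum_{a\ge1}(-1)^a\dim H_k\bigl(\mathcal{H}^{1,0}_\infty,\mathbb{V}_{(a,1^{\Delta+a})}\bigr)=-\mathbbm{1}_{k=1}\mathbbm{1}_{\Delta=0}\qquad(k\ge0).
\]
Interchange the (effectively finite) sums in your expression for the stable part. You get $(-1)^n\sum_{k\le\theta(g)}(-1)^kq^{-k}\bigl(-\mathbbm{1}_{k=1}\mathbbm{1}_{\Delta=0}\bigr)=q^{-1}\mathbbm{1}_{n=2g}$ exactly, as soon as $\theta(g)\ge1$. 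There is no tail, no evaluation of a power series at $z=q^{-1}$, and no lower bound on $q$. The whole error is then the unstable part you already bounded.

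This completed argument is a genuinely different and shorter route than the paper's. The paper compares the truncated stable part with the full double series over all $a\ge1$ and all $k\ge0$, read at $z=q^{-1}$. It must therefore bound the stable homology of the discarded hooks $a\ge n-g$ in every degree. It does so by running the uniform stable range backwards: the stable group in degree $k$ is already attained at genus $g(k)=\lceil (k-A-B)/(2A)\rceil$. Lemmas~\ref{lem:dim_bound_Braid} and~\ref{lem:dim_bounds_hook} at genus $g(k)$ then give $\dim H_k\le 2g(k)\,2^{6g(k)+a-1}$. With purity and vanishing, the error becomes a series in $\rho=2^{3/A}/q$ that converges only when $4\rho<1$. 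Through the slope $A$, that is the origin of the hypothesis $q>2^{3/A+2}$; organizing by degree, as you did, makes it unnecessary for this proposition.

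If you did want the tail anyway, your idea via Corollary~\ref{cor:series_bound} would work once combined with the vanishing theorem. That turns the growth $2^{a}$ in the size of the hook into a geometric series with ratio $2q^{-1/4}<1$ for $q>16$. Specializing Theorem~\ref{thm:poincare_series} at $t-t^{-1}$, by contrast, could never bound dimensions: the signs alternate and everything collapses to $1-zt^2$.
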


\begin{proof}
    We bound the unstable range as in the proof of Proposition~\ref{prop:small_n_contribution}, where we find an error of size
    \begin{align*}
        (2g)2^{4g}\left(\sum_{a=1}^{n-g-1} 2^{a-1}\right)\left(\sum_{k > \theta(g)} {2g \choose k} q^{-k/2}\right) \leq (2g)2^{5g+n} q^{-\theta(g)/2}.
    \end{align*}
    The stable range contributes $q^{-1}\mathbbm{1}_{n=2g}$ by Corollary~\ref{cor:series_evaluation}, and by Theorem~\ref{thm:vanishing_ell_lambda} we get an error of size
    \begin{align*}
        &\sum_{a \geq n-g} (-1)^{n+a} \sum_{k \geq 0} (-1)^k \mathrm{Tr}(\mathrm{Frob}_q \mid H_k^{\mathrm{\acute{e}t}}(\mathcal{H}_{\infty}^{1,0} \otimes \overline{\mathbb{F}}_q ,\mathbb{V}_{ (a,1^{2g-n+a})}))  \\
        \leq & \sum_{a \geq n-g} \sum_{k \geq \lceil (2g-n+a+1)/2\rceil} |\mathrm{Tr}(\mathrm{Frob}_q \mid H_k^{\mathrm{\acute{e}t}}(\mathcal{H}_{\infty}^{1,0} \otimes \overline{\mathbb{F}}_q ,\mathbb{V}_{ (a,1^{2g-n+a})}))| \\
        \leq & \sum_{a \geq n-g}  \sum_{k \geq \lceil (2g-n+a+1)/2\rceil}  |\mathrm{Tr}(\mathrm{Frob}_q \mid H_k^{\mathrm{\acute{e}t}}(\mathcal{H}_{g(k)}^{1,0} \otimes \overline{\mathbb{F}}_q ,\mathbb{V}_{(a,1^{2g-n+a})}))|,
    \end{align*}
    where
    \begin{equation*}
        g(k) =\left\lceil\frac{k-A-B}{2A}\right\rceil
    \end{equation*}
    so that 
    \begin{equation*}
        H_k^{\mathrm{\acute{e}t}}(\mathcal{H}_{g(k)}^{1,0} \otimes \overline{\mathbb{F}}_q ,\mathbb{V}_{(a,1^{2g-n+a})}) = H_k^{\mathrm{\acute{e}t}}(\mathcal{H}_{\infty}^{1,0} \otimes \overline{\mathbb{F}}_q ,\mathbb{V}_{(a,1^{2g-n+a})}).
    \end{equation*}
    By Lemma~\ref{lem:dim_bound_Braid} and Lemma~\ref{lem:dim_bounds_hook} we have
    \begin{equation*}
        \dim H_k^{\mathrm{\acute{e}t}}(\mathcal{H}_{g(k)}^{1,0} \otimes \overline{\mathbb{F}}_q ,\mathbb{V}_{(a,1^{2g-n+a}) }) \leq  (2g(k))2^{6g(k)+a-1},
    \end{equation*}
    so by Theorem~\ref{thm:purity} we have an error of size
    \begin{align*}
        &\sum_{a \geq n-g} \sum_{k \geq \lceil (2g-n+a+1)/2\rceil} (2g(k))2^{6g(k)+a-1} q^{-k} \\
        \ll & \sum_{a \geq n-g} 2^{a-1} \sum_{k \geq K_a } (k+1) \rho^k \\
        \ll &  \sum_{a \geq n-g} 2^{a-1} (K_a+1) \rho^{K_a},
    \end{align*}
    where $K_a = \lceil (2g-n+a+1)/2\rceil$ and 
    $\rho=2^{3/A}/q$. If $\frac32 g < n \leq 2g$, then $a \geq n-g > \frac12 g$. Here, the error is of size
    \begin{align*}
         &\sum_{a \geq g/2} 2^{a-1} (K_a+1) \rho^{K_a} \ll \sum_{a \geq g/2} 2^{a-1} (g+a+1) \rho^{\lceil a/2\rceil} \ll \sum_{a \geq g/2} (g+a+1) (4\rho)^{a/2} \ll g(4\rho)^{g/4}.
    \end{align*}
    If $g<n \leq \frac32 g$, then $2g -n + a \geq  \frac{g}{2} +a$, then we find 
    \begin{align*}
         &\sum_{a \geq 0} 2^{a-1} (K_a+1) \rho^{K_a} \ll \rho^{g/4 }\sum_{a \geq 0} 2^{a-1} (g+a+1) \rho^{\lceil a/2\rceil} \ll \rho^{g/4 }\sum_{a \geq 0} (g+a+1) (4\rho)^{a/2} \ll g\rho^{g/4}.
    \end{align*}
    Now we observe that for large $g$, we have 
    \begin{equation*}
        g\rho^{g/4} \leq g(4\rho)^{g/4}   \leq g 2^{(\frac{3}{4A}+\frac12)g}q^{-g/4} \leq g 2^{5g+n}q^{-\theta(g)/2},
    \end{equation*}
    so we can absorb these errors into the unstable range. 
\end{proof}

\begin{prop}\label{prop:large_n}
    For $n > 2g$ we have
    \begin{equation*}
         \sum_{a=0}^{g-1} (-1)^{a} \sum_{k \geq 0} (-1)^k \mathrm{Tr}(\mathrm{Frob}_q \mid H_k^{\mathrm{\acute{e}t}}(\mathcal{H}_{g}^{1,0} \otimes \overline{\mathbb{F}}_q ,\mathbb{V}_{ (n-2g+a,1^a)})) = O(g 2^{5g+n}q^{-\theta(g)/2}).
    \end{equation*}
\end{prop}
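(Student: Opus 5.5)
The plan is to split, exactly as in the proofs of Propositions~\ref{prop:small_n_contribution} and~\ref{prop:correction_term}, the inner sum over $k$ into the unstable range $k>\theta(g)$ and the stable range $k\le \theta(g)$. In the stable range we will show that every term vanishes identically, so the whole contribution comes from the unstable range.

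\textbf{Unstable range.} Here we bound each trace by $\dim H_k\cdot q^{-k/2}$, using Lemma~\ref{lem:mixed_weight}. The dimension is controlled through the comparison Theorem~\ref{thm:comparisons}, which lets us apply Lemma~\ref{lem:dim_bound_Braid} with $\beta_{2g+1}$ to get $\dim H_k\le \binom{2g}{k}\dim V_\lambda$. For the hooks $\lambda=(n-2g+a,1^a)$, Lemma~\ref{lem:dim_bounds_hook} gives $\dim V_\lambda\le (2g)2^{4g+n-2g+a-1}$. Summing over $0\le a\le g-1$ yields at most $(2g)2^{4g+n-2g}\cdot 2^{g-1}\le (2g)2^{3g+n}$. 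Using $\sum_{k>\theta(g)}\binom{2g}{k}q^{-k/2}\le 2^{2g}q^{-\theta(g)/2}$, the unstable part is $\ll g2^{5g+n}q^{-\theta(g)/2}$, which is the claimed error.

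\textbf{Stable range.} For $k\le\theta(g)$, Theorem~\ref{thm:stable_range} identifies $H_k(\mathcal{H}_g^{1,0},\mathbb{V}_\lambda)$ with the stable group $H_k(\mathcal{H}_\infty^{1,0},\mathbb{V}_\lambda)$. For $\lambda=(n-2g+a,1^a)$ we have $|\lambda|=n-2g+2a$ and $\lambda_1=n-2g+a$, so $\lambda_1>|\lambda|/2$ is equivalent to $n-2g>0$, which holds by assumption. Theorem~\ref{thm:vanishing_lambda1} then kills every stable term, uniformly in $a$ and $k$.

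The argument therefore goes through without any dependence on $q$ beyond what is needed for the unstable bound. The only point needing care is verifying that the stability isomorphism is compatible with Frobenius, so that traces rather than merely dimensions agree. This is implicit in Theorem~\ref{thm:purity} and in its use in the earlier propositions, and it does not matter here anyway, since the stable groups vanish. I expect no real obstacle: this is the easiest of the three cases.
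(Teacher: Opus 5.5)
Your proposal is correct and follows the paper's proof: you bound the unstable range $k>\theta(g)$ via Lemmas~\ref{lem:mixed_weight}, \ref{lem:dim_bound_Braid} and \ref{lem:dim_bounds_hook} to get $(2g)2^{5g+n}q^{-\theta(g)/2}$, and the stable range vanishes by Theorem~\ref{thm:vanishing_lambda1}, since $\lambda_1=n-2g+a>|\lambda|/2$ exactly when $n>2g$. You are also right that Frobenius compatibility of the stability isomorphism does not matter here, because the finite-genus group is isomorphic to a zero group and therefore vanishes.
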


\begin{proof}
    We bound the unstable range as before, which yields
    \begin{equation*}
        (2g) 2^{2g+n} \left(\sum_{a=0}^{g-1} 2^{a-1} \right)  \left( \sum_{k > \theta(g)} {2g \choose k} q^{-k/2} \right) \leq (2g) 2^{5g+n}q^{-\theta(g)/2}.
    \end{equation*}
    In this case, the stable range vanishes by Theorem~\ref{thm:vanishing_lambda1}.
\end{proof}

\begin{proof}[Proof of Theorem~\ref{thm:cohomological_formula}]
    By Equation~\eqref{eq:trace_formula} we have
    \begin{equation}\label{eq.exptrtd}
         \langle \operatorname{Tr}(\Theta^n_D) \rangle = \frac{q}{q-1} \sum_{\lambda \in P(n)}  c_{\lambda,n,g}  \sum_{k \geq 0}  (-1)^k \mathrm{Tr}(\mathrm{Frob}_q \mid H_k^{\mathrm{\acute{e}t}}(\mathcal{H}_g^{1,0} \otimes \overline{\mathbb{F}}_q, \mathbb{V}_\lambda)).
    \end{equation}
    Observe that if $n$ is odd, then every $\lambda \in P(n)$ has odd size. By Proposition~\ref{prop:vanishing_odd}, we conclude that $\langle \operatorname{Tr}(\Theta^n_D) \rangle = 0$ for $2 \nmid n$. For $n \leq g$ and $2 \mid n$, it follows from Lemma~\ref{lem:trivial_rep_contribution} and Proposition~\ref{prop:small_n_contribution} that
    \begin{align*}
         \langle \operatorname{Tr}(\Theta^n_D) \rangle  =
        -1 + O\left(2^{n/2}q^{-n/8}+g2^{6g+n}q^{-\theta(g)/2}\right), \ \ n\leq g, \ \ 2 \mid n, \ \ q>16.
    \end{align*}
    For $g < n \leq 2g$ and $2 \mid n$, by Lemma~\ref{lem:trivial_rep_contribution}, Proposition~\ref{prop:small_n_contribution}, and Proposition~\ref{prop:correction_term}, we have
    \begin{equation*}
        \langle \operatorname{Tr}(\Theta^n_D) \rangle  = -1 - \frac{\mathbbm{1}_{n=2g}}{q-1} + O\left(g2^{5g+n}q^{-\theta(g)/2}\right), \ \ g < n \leq 2g, \ \ 2 \mid n, \ \ 
        q>2^{3/A+2}.
    \end{equation*}
    For $n > 2g$ and $2 \mid n$, by Proposition~\ref{prop:small_n_contribution} and Proposition~\ref{prop:large_n}, we have
    \begin{equation*}
        \langle \operatorname{Tr}(\Theta^n_D) \rangle  = O\left(g2^{5g+n}q^{-\theta(g)/2}\right), \ \ n>2g, \ \ 2 \mid n.
    \end{equation*}
By combining equation~\eqref{eq.exptrtd} with the last three equations and recognizing the first term as coming from random matrix theory (cf. \cite{DiaconisShahshahani1994}),
    \begin{equation}\label{eq:trace_powers_USP2g}
        \int_{\mathrm{USp_{2g}}(\mathbb{C})} \operatorname{Tr}(U^n)dU = \begin{cases}
            2g & \text{ if } n= 0, \\
            -\mathbbm{1}_{2 \mid n} & \text{ if } 0 < n \leq 2g, \\
            0 & \text{ if } n > 2g,
        \end{cases}   
    \end{equation}
    we conclude the proof. 
\end{proof}

\section{Proof of corollaries}
We give standard arguments to derive Corollary~\ref {cor:one_level_density} and Corollary~\ref{cor:non_vanishing} from Theorem~\ref{thm:cohomological_formula}. 

\begin{proof}[Proof of Corollary~\ref{cor:one_level_density}]
    Recall that  we defined $Z_f$ in Equation~\eqref{eq:def_ZfU}. We have its Fourier expansion  given by
    \begin{equation}\label{eq:ZfU_fourier}
        Z_f(U) = \int_{-\infty}^\infty f(x)dx +\frac{1}{N}\sum_{n \neq 0} \widehat{f}\left(\frac{n}{N}\right)\operatorname{Tr}U^n
    \end{equation}
    for $U$ a unitary $N \times N$ matrix. Averaging Equation~\eqref{eq:ZfU_fourier} over $D \in \mathcal{H}_{2g+1}$, taking $N=2g$, using that $f$ is even, and applying Theorem~\ref{thm:cohomological_formula} yields
    \begin{align}\label{eq:one_level_density_expansion}
    \begin{split}
         \langle Z_f(\Theta_D) \rangle &= \widehat{f}(0) - \frac{1}{g} \sum_{1 \leq m \leq g} \widehat{f}\left(\frac{m}{g}\right) - \frac{\widehat{f}(1)}{g(q-1)} + \frac{1}{g}\sum_{m > 0} \widehat{f}\left(\frac{m}{g}\right) c_m \\
        & + O \left(\frac{1}{g}\sum_{m > 0} \widehat{f}\left(\frac{m}{g}\right)g2^{6g+2m}q^{-\lfloor 2Ag+A+B\rfloor /2}\right),
    \end{split}
    \end{align}    
    where $c_m\ll 2^mq^{-m/4}$ denotes the first term in the error term of Theorem~\ref{thm:cohomological_formula}. 

    Using the average value of traces of powers in $\mathrm{USp}_{2g}(\mathbb{C})$ from Equation~\eqref{eq:trace_powers_USP2g},   
    for even $f$ we have
    \begin{equation*}
        \int_{\mathrm{USp}_{2g}(\mathbb{C})} Z_f(U)dU = \widehat{f}(0) - \frac{1}{g} \sum_{1 \leq m \leq g} \widehat{f}\left(\frac{m}{g}\right).
    \end{equation*}
    For $q > 16$ we have $c_m \ll r^m$ for some absolute $|r|<1$, and since $\widehat{f}$ is smooth, for any $J \geq 1$
    \begin{align}
    \begin{split}\label{eq:correction_at_zero}
        \sum_{m > 0} \widehat{f}\left(\frac{m}{g}\right)c_m &= \sum_{j = 0}^{J-1} \frac{\widehat{f}^{(2j)}(0)}{(2j)!g^{2j}} \sum_{m \geq 1} m^{2j}c_m + O\left(\frac{||\widehat{f}^{(2J)}{}||_\infty}{(2J)!g^{2J}}\sum_{m \geq 1}m^{2J} r^m\right)\\
        & = \sum_{j = 0}^{J-1} C_j(q)\widehat{f}^{(2j)}(0) g^{-2j} + O_{f,J}(g^{-2J}).   
    \end{split}
    \end{align}
    Note that one can use ~\cite{rudnick2008traces} to show that
    \begin{equation*}
       c_m=q^{-m}\sum_{\substack{\deg P \mid m  \\ P \text{ irred}}} \frac{\deg P}{|P|+1},
    \end{equation*}
    and compute the $C_j(q)$ explicitly. Lastly, using $\mathrm{supp}(\widehat{f}) \subseteq (-v,v)$ we have
    \begin{equation}\label{eq:error_term}
        \frac{1}{g}\sum_{m > 0} \widehat{f}\left(\frac{m}{g}\right)g2^{6g+2m}q^{-\lfloor 2Ag+A+B\rfloor /2} \leq  vg||\widehat{f}||_\infty 2^{(6+2v)g}q^{-\lfloor 2Ag+A+B\rfloor /2}.
    \end{equation}
    Combining Equation~\eqref{eq:one_level_density_expansion}, Equation~\eqref{eq:trace_powers_USP2g}, Equation~\eqref{eq:correction_at_zero}, and Equation~\eqref{eq:error_term}, we conclude 
    \begin{align*}
        \langle Z_f(\Theta_D) \rangle &= \int_{\mathrm{USp_{2g}(\mathbb{C})}} Z_f(U)dU - \frac{\widehat{f}(1)}{g(q-1)} + \sum_{j= 0}^{J-1}C_j(q) \widehat{f}^{(2j)}(0)g^{-2j-1} \\
        &+O_{J,f}\left(g^{-2J}\right) + O_f\left(vg2^{(6+2v)g}q^{-\lfloor 2Ag+A+B\rfloor /2}\right).
    \end{align*}
\end{proof}

\begin{proof}[Proof of Corollary~\ref{cor:non_vanishing}]
    Let 
    \begin{equation*}
        p_m(g) = \frac{\#\{D \in \mathcal{H}_{2g+1} : \operatorname{ord}_{s=1/2} L(s,\chi_D) = m\}}{\#\mathcal{H}_{2g+1}}.
    \end{equation*}
    Then for any $f \in \mathcal{S}(\mathbb{R})$ with $\text{supp } \widehat{f} \subseteq (-v,v)$ with $f \geq 0$ and $f(0) = 1$,
    \begin{equation*}
        \langle Z_f(\Theta_D)\rangle = \frac{1}{\#\mathcal{H}_{2g+1}}\sum_{D \in \mathcal{H}_{2g+1}} \sum_{j=1}^{2g} F(\theta_{D,j}) \geq \sum_{m\geq 0} mp_m(g) \geq 2(1-p_0(g)),
    \end{equation*}
    where $\theta_{D,j}$ runs over the eigenvalues of $\Theta_D$.
    For the last inequality, we used $\sum_{m \geq 0} p_m(g)  = 1$, and since this family has root number $1$, we have $p_m(g) = 0$ for $m$ odd. It follows that
    \begin{equation*}
        p_0(g) \geq 1- \frac12\langle Z_f(\Theta_D) \rangle.
    \end{equation*}
    Using Corollary~\ref{cor:one_level_density}, for $q > \max \{2^{\frac{3}{A}+2}, 2^{\frac{6+2v}{A}}\}$ we have
    \begin{equation*}
        p_0(g) \geq 1 - \frac12\int_{\mathrm{USp_{2g}}(\mathbb{C})}Z_f(U)dU + o(1).
    \end{equation*}
    Therefore
    \begin{equation*}
        \liminf_{g \to \infty} p_0(g) \geq 1 - \frac{1}{2}\int_\mathbb{R}\left(1-\frac{\sin(2\pi x)}{2 \pi x}\right)f(x)dx,
    \end{equation*}
    and choosing a sequence of non-negative Schwarz functions $f_n$ with $\operatorname{supp} \widehat{f_n} \subseteq (-v,v)$ and $f_n(0)=1$ converging to
    \begin{equation*}
        f(x) = \left(\frac{\sin (\pi v x)}{\pi v x}\right)^2,
    \end{equation*}
    yields
    \begin{equation*}
       \liminf_{g \to \infty} p_0(g) \geq 1 - \frac{1}{4 v^2}.
    \end{equation*}
    Since we can take $v \to \infty$ as $q \to \infty$, we conclude
    \begin{equation*}
         \lim_{q \to \infty}\liminf_{g \to \infty} p_0(g) = 1.
    \end{equation*}
\end{proof}

\section{Proof of Theorem~\ref{thm:cubic_Kummer}}
Most of the techniques and ideas here were developed in  \cite{DFL}. 
For any $a \in \mathbb{F}_q((1/T))$, we consider the following exponential function introduced in \cite{hayes}:
\begin{equation*}
e_q(a) = e^{\frac{2 \pi i \tr_{\F_q/\F_p}(a_1) }{p}},
\label{exp-ff}
\end{equation*} 
where $a_1$ is the coefficient of $1/T$ in the Laurent expansion of $a$. 
We will repeatedly make use of generalized Gauss sums studied in \cite[Section~2C]{DFL}:
\begin{equation*}
G_q(V,f) = \sum_{u \pmod f} \chi_f(u) e_q\left( \frac{uV}{f} \right ).
\end{equation*}
Notice that, for $a \in \F_q^*$, we have \begin{equation}\label{eq:charF}\chi_c(a)=a^{\frac{q-1}{3}\deg(c)}.\end{equation}

We first consider the case $3 \nmid d$. By Equation~\eqref{eq:charF}, $\chi_c$ is an odd character. By \cite[Corollary 2.3]{DFL}, we have
\[\omega(\chi_c)=\frac{q^{-d/2}}{\epsilon(\chi_c)}G_q(1,c),\]
where
\[\epsilon(\chi_c)=q^{-1/2}\sum_{a\in \F_q^*}\chi_c(a)e^{2\pi i \tr_{\F_q/\F_p}(a)/p}.\]
Notice that $\epsilon(\chi_c)$ depends only on $\deg(c)=d$. We will denote this value $\epsilon(d)$.
We have
\begin{equation*}\sum_{\chi_c\in \mathcal{F}_3(d)}  \omega(\chi_c)\overline{\chi_c}(P) = \frac{q^{-d/2}}{\epsilon(d)} \sum_{c\in \mathcal{M}_d} G_q(1,c)\overline{\chi_c}(P) =\frac{q^{-d/2}}{\epsilon(d)} \sum_{\substack{c\in \mathcal{M}_d\\(c,P)=1}} G_q(P,c),
\end{equation*}
where we have applied the fact that $G_q(1,c)=0$ when $c$ is not squarefree (see \cite[Lemma 2.12]{DFL}).
By \cite[Proposition 3.1]{DFL}, we have, for $2/3<\sigma< 4/3$,
\begin{align*}\sum_{\substack{c\in \mathcal{M}_d\\(c,P)=1}} G_q(P,c)=&
\frac{ q^{\frac{4d}{3}-\frac{2n}{3} - \frac{4}{3} [d+n]_3} }{ \zeta_q(2)} \overline{G_q(1,P)} \rho(1, [d+n]_3)\left ( 1+\frac{1}{|P|}\right )^{-1} \\ \nonumber
&+ O \left (q^{\frac{d}{3}+\frac{n}{6}+\varepsilon d}\right )
+ O\left(q^{\sigma d+\frac{n}{2}(\frac{3}{2}-\sigma)}\right),
\end{align*}
where $[m]_3$ denotes the residue of $m$ modulo 3 such that $0\leq m \leq 2$, and
\[\rho(1,0)=1, \qquad \rho(1,1)=\tau(\chi_3)q, \qquad \rho(1,2)=0,\]
in which
\[\tau(\chi_3)=\sum_{a\in \F_q^*}\chi_3(a)e^{2\pi i \tr_{\F_q/\F_p}(a)/p}\]
where $\chi_3(a)=a^\frac{q-1}{3}$ for certain fixed choice of third root of unity in $\F_q^*.$
By \cite[Lemma 2.12]{DFL}, we have that
\[G_q(1,P)=\epsilon(\chi_P)\omega(\chi_P)|P|^{1/2}=\epsilon(n)\omega(\chi_P)|P|^{1/2}.\]
Putting all of this together with  $d=2n$ and $\sigma=2/3+\varepsilon$, we obtain
\begin{equation}\label{eq:3nmidd}
\frac{1}{\# \mathcal{F}_3(d)} \sum_{\chi_c\in \mathcal{F}_3(d)} \omega(\chi_c)\overline{\chi_c}(P)|P|^{1/2}
 = \overline{\omega(\chi_P)}\left ( 1+\frac{1}{|P|}\right )^{-1} + O \left (q^{-\frac{3d}{8}+\varepsilon d}\right ).
\end{equation}

Now consider the case $3 \mid d$. By Equation~\eqref{eq:charF}, $\chi_c$ is an even character. By \cite[Corollary 2.3]{DFL}, we have
\[\omega(\chi_c)=q^{-d/2} G_q(1,c).\]
This gives, as before,
\begin{equation*}
\sum_{\chi_c\in \mathcal{F}_3(d)}  \omega(\chi_c)\overline{\chi_c}(P) = q^{-d/2}  \sum_{c\in \mathcal{M}_d} G_q(1,c)\overline{\chi_c}(P)=q^{-d/2}  \sum_{\substack{c\in \mathcal{M}_d\\(c,P)=1}} G_q(P,c).
\end{equation*}
Applying \cite[Proposition 3.1]{DFL} as before, setting $d=2n$ and $\sigma=2/3+\varepsilon$, we have,
\begin{equation}\label{eq:3midd}
\frac{1}{\# \mathcal{F}_3(d)} \sum_{\chi_c\in \mathcal{F}_3(d)} \omega(\chi_c)\overline{\chi_c}(P)|P|^{1/2}
 = \overline{\omega(\chi_P)}\left ( 1+\frac{1}{|P|}\right )^{-1} + O \left (q^{-\frac{d}{6}+\varepsilon d}\right).
\end{equation}
Combining Equations~\eqref{eq:3nmidd} and~\eqref{eq:3midd} gives the result.

\bibliographystyle{alpha}  
\bibliography{ref}  

@book{katzsarnak1999,
    AUTHOR = {Katz, Nicholas M. and Sarnak, Peter},
     TITLE = {Random matrices, {F}robenius eigenvalues, and monodromy},
    SERIES = {American Mathematical Society Colloquium Publications},
    VOLUME = {45},
 PUBLISHER = {American Mathematical Society, Providence, RI},
      YEAR = {1999},
     PAGES = {xii+419},
      ISBN = {0-8218-1017-0},
   MRCLASS = {11G25 (11M06 11Y35 14D05 14G10 60F99 82B44)},
  MRNUMBER = {1659828},
MRREVIEWER = {Philippe\ G.\ Michel},
       DOI = {10.1090/coll/045},
       URL = {https://doi.org/10.1090/coll/045},
}

@book{macdonald1995symmetric,
  title={Symmetric Functions and Hall Polynomials},
  author={Macdonald, Ian Grant},
  year={1995},
  publisher={Oxford University Press},
  edition={2nd},
  isbn={978-0198504504},
  series={Oxford Mathematical Monographs}
}

@article {rudnick2008traces,
    AUTHOR = {Rudnick, Ze\'{e}v},
     TITLE = {Traces of high powers of the {F}robenius class in the
              hyperelliptic ensemble},
   JOURNAL = {Acta Arith.},
  FJOURNAL = {Acta Arithmetica},
    VOLUME = {143},
      YEAR = {2010},
    NUMBER = {1},
     PAGES = {81--99},
      ISSN = {0065-1036,1730-6264},
   MRCLASS = {11M50 (11G20)},
  MRNUMBER = {2640060},
MRREVIEWER = {Steven\ Joel\ Miller},
       DOI = {10.4064/aa143-1-5},
       URL = {https://doi.org/10.4064/aa143-1-5},
}

@article{wang2024noteszetaratiostabilization,
      title={Notes on zeta ratio stabilization}, 
      author={Victor Y. Wang},
      year={2024},
      journal={arXiv:2402.01214},
      archivePrefix={arXiv},
      primaryClass={math.NT},
      url={https://arxiv.org/abs/2402.01214}, 
}

@article{bergström2024hyperellipticcurvesscanningmap,
      title={Hyperelliptic curves, the scanning map, and moments of families of quadratic {$L$}-functions}, 
      author={Jonas Bergström and Adrian Diaconu and Dan Petersen and Craig Westerland},
      year={2023},
      journal={arXiv:2302.07664},
      archivePrefix={arXiv},
      primaryClass={math.NT},
      url={https://arxiv.org/abs/2302.07664}, 
}

@article {AbramovichCortiVistoli2003,
    AUTHOR = {Abramovich, Dan and Corti, Alessio and Vistoli, Angelo},
     TITLE = {Twisted bundles and admissible covers},
      NOTE = {Special issue in honor of Steven L. Kleiman},
   JOURNAL = {Comm. Algebra},
  FJOURNAL = {Communications in Algebra},
    VOLUME = {31},
      YEAR = {2003},
    NUMBER = {8},
     PAGES = {3547--3618},
      ISSN = {0092-7872,1532-4125},
   MRCLASS = {14H10 (14A20 14H30)},
  MRNUMBER = {2007376},
MRREVIEWER = {Andrew\ Kresch},
       DOI = {10.1081/AGB-120022434},
       URL = {https://doi.org/10.1081/AGB-120022434},
}

@article{miller2025uniformtwistedhomologicalstability,
      title={Uniform twisted homological stability}, 
      author={Jeremy Miller and Peter Patzt and Dan Petersen and Oscar Randal-Williams},
      year={2024},
      journal={arXiv:2402.00354},
      archivePrefix={arXiv},
      primaryClass={math.AT},
      url={https://arxiv.org/abs/2402.00354}, 
}

@article{Andrade_2014,
    AUTHOR = {Andrade, Julio C. and Keating, Jonathan P.},
     TITLE = {Conjectures for the integral moments and ratios of
              {$L$}-functions over function fields},
   JOURNAL = {J. Number Theory},
  FJOURNAL = {Journal of Number Theory},
    VOLUME = {142},
      YEAR = {2014},
     PAGES = {102--148},
      ISSN = {0022-314X,1096-1658},
   MRCLASS = {11G20 (11M50 14G10)},
  MRNUMBER = {3208396},
MRREVIEWER = {Steven\ Joel\ Miller},
       DOI = {10.1016/j.jnt.2014.02.019},
       URL = {https://doi.org/10.1016/j.jnt.2014.02.019},
}

@misc{sarnak2023murmurations,
  author       = {Sarnak, Peter},
  title        = {Letter to {D}rew {S}utherland and {N}ina {Z}ubrilina on {M}urmurations and {R}oot {N}umbers},
  year         = {2023},
  month        = aug,
  howpublished = {Handwritten letter, Institute for Advanced Study Publications},
  url          = {https://publications.ias.edu/sarnak/paper/2726},
  note         = {Listed as ``Letter to Sutherland and Zubrilina''},
}

@article {LOP,
    AUTHOR = {Lee, Kyu-Hwan and Oliver, Thomas and Pozdnyakov, Alexey},
     TITLE = {Murmurations of {D}irichlet characters},
   JOURNAL = {Int. Math. Res. Not. IMRN},
  FJOURNAL = {International Mathematics Research Notices. IMRN},
      YEAR = {2025},
    NUMBER = {1},
     PAGES = {Paper No. rnae277, 28},
      ISSN = {1073-7928,1687-0247},
   MRCLASS = {11R42},
  MRNUMBER = {4847277},
MRREVIEWER = {Soun-Hi\ Kwon},
       DOI = {10.1093/imrn/rnae277},
       URL = {https://doi.org/10.1093/imrn/rnae277},
}

@article {HLOP,
    AUTHOR = {He, Yang-Hui and Lee, Kyu-Hwan and Oliver, Thomas and
              Pozdnyakov, Alexey},
     TITLE = {Murmurations of elliptic curves},
   JOURNAL = {Exp. Math.},
  FJOURNAL = {Experimental Mathematics},
    VOLUME = {34},
      YEAR = {2025},
    NUMBER = {3},
     PAGES = {528--540},
      ISSN = {1058-6458,1944-950X},
   MRCLASS = {11G05 (11F30 11Y70 62J12)},
  MRNUMBER = {4960171},
       DOI = {10.1080/10586458.2024.2382361},
       URL = {https://doi.org/10.1080/10586458.2024.2382361},
}

@article {Zubrilina,
    AUTHOR = {Zubrilina, Nina},
     TITLE = {Murmurations},
   JOURNAL = {Invent. Math.},
  FJOURNAL = {Inventiones Mathematicae},
    VOLUME = {241},
      YEAR = {2025},
    NUMBER = {3},
     PAGES = {627--680},
      ISSN = {0020-9910,1432-1297},
   MRCLASS = {11F30 (11F11)},
  MRNUMBER = {4946245},
       DOI = {10.1007/s00222-025-01347-8},
       URL = {https://doi.org/10.1007/s00222-025-01347-8},
}

@book {Rosen,
    AUTHOR = {Rosen, Michael},
     TITLE = {Number theory in function fields},
    SERIES = {Graduate Texts in Mathematics},
    VOLUME = {210},
 PUBLISHER = {Springer-Verlag, New York},
      YEAR = {2002},
     PAGES = {xii+358},
      ISBN = {0-387-95335-3},
   MRCLASS = {11R58 (11R60 11T55)},
  MRNUMBER = {1876657},
MRREVIEWER = {Ernst-Ulrich\ Gekeler},
       DOI = {10.1007/978-1-4757-6046-0},
       URL = {https://doi.org/10.1007/978-1-4757-6046-0},
}

@article {DFL,
    AUTHOR = {David, Chantal and Florea, Alexandra and Lal\'{\i}n, Matilde},
     TITLE = {The mean values of cubic {$L$}-functions over function fields},
   JOURNAL = {Algebra Number Theory},
  FJOURNAL = {Algebra \& Number Theory},
    VOLUME = {16},
      YEAR = {2022},
    NUMBER = {5},
     PAGES = {1259--1326},
      ISSN = {1937-0652,1944-7833},
   MRCLASS = {11R59 (11M06 11M38 11R16 11R58)},
  MRNUMBER = {4471042},
MRREVIEWER = {Ofir\ Gorodetsky},
       DOI = {10.2140/ant.2022.16.1259},
       URL = {https://doi.org/10.2140/ant.2022.16.1259},
}

@article {DFL2,
    AUTHOR = {David, Chantal and Florea, Alexandra and Lalin, Matilde},
     TITLE = {Nonvanishing for cubic {$L$}-functions},
   JOURNAL = {Forum Math. Sigma},
  FJOURNAL = {Forum of Mathematics. Sigma},
    VOLUME = {9},
      YEAR = {2021},
     PAGES = {Paper No. e69, 58},
      ISSN = {2050-5094},
   MRCLASS = {11R59 (11M06 11M38 11R16 11R58)},
  MRNUMBER = {4323990},
MRREVIEWER = {Jos\'{e}\ Alejandro\ Lara Rodr\'{\i}guez},
       DOI = {10.1017/fms.2021.62},
       URL = {https://doi.org/10.1017/fms.2021.62},
}

@article {Bui-Florea,
    AUTHOR = {Bui, Hung M. and Florea, Alexandra},
     TITLE = {Zeros of quadratic {D}irichlet {$L$}-functions in the
              hyperelliptic ensemble},
   JOURNAL = {Trans. Amer. Math. Soc.},
  FJOURNAL = {Transactions of the American Mathematical Society},
    VOLUME = {370},
      YEAR = {2018},
    NUMBER = {11},
     PAGES = {8013--8045},
      ISSN = {0002-9947,1088-6850},
   MRCLASS = {11M38 (11M06 11M50)},
  MRNUMBER = {3852456},
MRREVIEWER = {Kohji\ Matsumoto},
       DOI = {10.1090/tran/7317},
       URL = {https://doi.org/10.1090/tran/7317},
}

@article {EllenbergLiShusterman,
    AUTHOR = {Ellenberg, Jordan S. and Li, Wanlin and Shusterman, Mark},
     TITLE = {Nonvanishing of hyperelliptic zeta functions over finite
              fields},
   JOURNAL = {Algebra Number Theory},
  FJOURNAL = {Algebra \& Number Theory},
    VOLUME = {14},
      YEAR = {2020},
    NUMBER = {7},
     PAGES = {1895--1909},
      ISSN = {1937-0652,1944-7833},
   MRCLASS = {11M38},
  MRNUMBER = {4150253},
MRREVIEWER = {Adam\ Morgan},
       DOI = {10.2140/ant.2020.14.1895},
       URL = {https://doi.org/10.2140/ant.2020.14.1895},
}

@article {LiHyperellipticVanishing,
    AUTHOR = {Li, Wanlin},
     TITLE = {Vanishing of hyperelliptic {L}-functions at the central point},
   JOURNAL = {J. Number Theory},
  FJOURNAL = {Journal of Number Theory},
    VOLUME = {191},
      YEAR = {2018},
     PAGES = {85--103},
      ISSN = {0022-314X,1096-1658},
   MRCLASS = {11G40},
  MRNUMBER = {3825462},
MRREVIEWER = {Steven\ Joel\ Miller},
       DOI = {10.1016/j.jnt.2018.03.018},
       URL = {https://doi.org/10.1016/j.jnt.2018.03.018},
}

@article {DonepudiLi,
    AUTHOR = {Donepudi, Ravi and Li, Wanlin},
     TITLE = {Vanishing of {D}irichlet {$L$}-functions at the central point
              over function fields},
   JOURNAL = {Rocky Mountain J. Math.},
  FJOURNAL = {The Rocky Mountain Journal of Mathematics},
    VOLUME = {51},
      YEAR = {2021},
    NUMBER = {5},
     PAGES = {1615--1628},
      ISSN = {0035-7596,1945-3795},
   MRCLASS = {11M38 (11R59 11S40 11T22 14G10)},
  MRNUMBER = {4382986},
       DOI = {10.1216/rmj.2021.51.1615},
       URL = {https://doi.org/10.1216/rmj.2021.51.1615},
}

@article{DFL3,
      title={Nonvanishing of {$L$}--functions associated to fixed order characters over function fields}, 
      author={Chantal David and Alexandra Florea and Matilde Lalin},
      year={2025},
      journal={arXiv:2506.07815},
      archivePrefix={arXiv},
      primaryClass={math.NT},
      url={https://arxiv.org/abs/2506.07815}, 
}

@article{DiaconisShahshahani1994,
  author  = {Diaconis, Persi and Shahshahani, Mehrdad},
  title   = {On the Eigenvalues of Random Matrices},
  journal = {Journal of Applied Probability},
  volume  = {31},
  number  = {A},
  series  = {Studies in Applied Probability},
  pages   = {49--62},
  year    = {1994},
  doi     = {10.1017/S0021900200106989}
}

@article{DavidGuloglu2022OneLevelDensity,
  author        = {David, Chantal and G{\"{u}}lo\u{g}lu, Ahmet M.},
  title         = {One-Level Density and Non-Vanishing for Cubic {$L$}-Functions over the {E}isenstein Field},
  journal       = {Int. Math. Res. Not. IMRN},
  year          = {2022},
  volume        = {2022},
  number        = {23},
  pages         = {18833--18873},
  doi           = {10.1093/imrn/rnab240},
  eprint        = {2102.02469},
  archiveprefix = {arXiv},
  primaryclass  = {math.NT},
  url           = {https://doi.org/10.1093/imrn/rnab240}
}

@article{Guloglu2025NonVanishing,
  author        = {G{\"u}lo\u{g}lu, Ahmet M.},
  title         = {Non-Vanishing of Cubic {D}irichlet {$L$}-Functions over the {E}isenstein Field},
  journal       = {Proceedings of the American Mathematical Society},
  year          = {2025},
  volume        = {153},
  number        = {5},
  pages         = {1947--1961},
  doi           = {10.1090/proc/17155},
  eprint        = {2306.09474},
  archiveprefix = {arXiv},
  primaryclass  = {math.NT},
  url           = {https://doi.org/10.1090/proc/17155}
}

@article {hayes,
    AUTHOR = {Hayes, David R.},
     TITLE = {The expression of a polynomial as a sum of three irreducibles},
   JOURNAL = {Acta Arith.},
  FJOURNAL = {Polska Akademia Nauk. Instytut Matematyczny. Acta Arithmetica},
    VOLUME = {11},
      YEAR = {1966},
     PAGES = {461--488},
      ISSN = {0065-1036},
   MRCLASS = {12.25},
  MRNUMBER = {0201422},
MRREVIEWER = {L. Carlitz},
       DOI = {10.4064/aa-11-4-461-488},
       URL = {https://doi.org/10.4064/aa-11-4-461-488},
}

\end{document}